\date{14th May, 2021}
\title{Twisted quadratic foldings of root systems\\ and liftings of Schubert classes}
\author{Maiko Serizawa\\
\small Department of Mathematics and Statistics\\[-0.8ex]
\small University of Ottawa\\
\small\tt mseri065@uottawa.ca
}
\theoremstyle{plain}
\newtheorem{theorem}{Theorem}[section]
\newtheorem{lemma}[theorem]{Lemma}
\newtheorem{corollary}[theorem]{Corollary}
\newtheorem{proposition}[theorem]{Proposition}
\newtheorem{claim}[theorem]{Claim}
\theoremstyle{definition}
\newtheorem{definition}[theorem]{Definition}
\newtheorem{example}[theorem]{Example}
\newtheorem{question}[theorem]{Question}
\theoremstyle{remark}
\newtheorem{remark}[theorem]{Remark}
\newcommand{\ra}{\rightarrow}
\newcommand{\hra}{\hookrightarrow}
\newcommand{\longra}{\longrightarrow}
\newcommand{\La}{\Leftarrow}
\newcommand{\Ra}{\Rightarrow}
\newcommand{\ke}{{\mathcal E}}
\newcommand{\kf}{{\mathcal F}}
\newcommand{\kg}{{\mathcal G}}
\newcommand{\kl}{{\mathcal L}}
\newcommand{\kr}{{\mathcal R}}
\newcommand{\kt}{{\mathcal T}}
\newcommand{\ku}{{\mathcal U}}
\newcommand{\kz}{{\mathcal Z}}
\newcommand{\C}{{\mathbb C}}
\newcommand{\IE}{{\mathbb E}}
\newcommand{\Q}{{\mathbb Q}}
\newcommand{\R}{{\mathbb R}}
\newcommand{\Z}{{\mathbb Z}}
\newcommand{\gi}{{\mathfrak i}}
\newcommand{\Aut}{\operatorname{Aut}}
\newcommand{\GL}{\operatorname{GL}}
\newcommand{\Sym}{\operatorname{Sym}}
\newcommand{\inv}{^{-1}}
\newcommand{\dual}[1]{{#1}^{\vee}}
\newcommand{\blform}[2]{({#1}, {#2})}
\newcommand{\tauform}[2]{({#1}, {#2})_{\tau}}
\newcommand{\lbar}{\overline}
\newcommand{\Lattice}{\Lambda}
\newcommand{\RLattice}{\Lambda_r}
\newcommand{\Root}{\Phi}
\newcommand{\Simple}{\Delta}
\newcommand{\Weyl}{{W}}
\newcommand{\cd}{\Gamma}
\newcommand{\red}[1]{\kr({#1})}
\newcommand{\FM}[1]{F({#1})}
\newcommand{\sym}{\mathcal{S}}
\newcommand{\moment}{\kg}
\newcommand{\pmoment}{\kg^P}
\newcommand{\tmoment}{\moment_{\tau}}
\newcommand{\tpmoment}{\moment^P_{\tau}}
\newcommand{\sa}[1]{\kz({#1})}
\newcommand{\freemodule}[1]{\bigoplus_{#1}\sym}
\newcommand{\labelling}{\kl}
\newcommand{\cgen}[1]{s_{#1}}
\newcommand{\reflection}[1]{s_{#1}}
\newcommand{\Reflection}[1]{R_{#1}}
\newcommand{\basek}{K}
\newcommand{\basel}{L}
\newcommand{\Utau}{U_{\basel}^{(\tau)}}
\newcommand{\Usig}{U_{\basel}^{(\sigma)}}
\newcommand{\tauo}{\kt}
\newcommand{\tauproj}{\pi_{\tau}}
\newcommand{\TWeyl}{\Weyl_{\tau}}
\newcommand{\embedding}{\varepsilon}
\newcommand{\length}[1]{l(#1)}
\newcommand{\tlength}[1]{l_{\tau}(#1)}
\newcommand{\adm}{\varphi}
\newcommand{\admred}[1]{\kr_{\adm}(#1)}
\newcommand{\schubert}[1]{\sigma^{(#1)}}
\newcommand{\tschubert}[1]{\sigma_{\tau}^{(#1)}}
\newcommand{\foldingset}[1]{{#1}^{\kf}}
\newcommand{\Para}[1]{P_{{#1}}}
\newcommand{\TPara}[1]{(P_{\tau})_{{#1}}}
\newcommand{\coho}[2]{H^{#1}({#2})}
\newcommand{\tcoho}[2]{H^{#1}_T({#2})}
\DeclareMathOperator{\spn}{span}
\DeclareMathOperator{\pr}{pr}
\begin{document}
\maketitle

\begin{abstract}
Given a finite crystallographic root system $\Root$ whose Dynkin diagram has a non-trivial automorphism, it yields a new root system $\Root_{\tau}$ by a so-called classical folding. On the other hand, Lusztig's folding (1983) folds the root system of type $E_8$ to $H_4$ starting from an automorphism of the root lattice of type $E_8$. The notion of a \emph{twisted quadratic folding} of a root system was introduced by Lanini--Zainoulline (2018) to describe both the classical foldings and Lusztig's folding on the same footing. 
The \emph{structure algebra} $\sa{\moment}$ of the moment graph $\moment$ associated with a finite root system and its reflection group $\Weyl$ is an algebra  over a certain polynomial ring $\sym,$ whose underlying module is free with a distinguished basis $\{\schubert{w} \mid w \in \Weyl\}$ called \emph{combinatorial Schubert classes}.
By Lanini--Zainoulline (2018), a twisted quadratic folding $\Root \rightsquigarrow \Root_{\tau}$ induces an embedding of the respective Coxeter groups $\embedding: \TWeyl \hra \Weyl$ and a ring homomorphism $\embedding^*: \sa{\moment} \ra \sa{\tmoment}$ between the corresponding structure algebras.
This paper studies the $\embedding^*$-preimage of Schubert classes and in particular provides a combinatorial criterion for a Schubert class $\tschubert{u}$ of $\sa{\tmoment}$ to admit a Schubert class $\schubert{w}$ of $\sa{\moment}$ such that the relation $\embedding^*(\schubert{w}) = c \cdot \tschubert{u}$ holds for some nonzero scalar $c$. 
\end{abstract}

\section{Introduction}

This paper builds on the connection of two major mathematical constructions, \emph{foldings} of finite root systems and the \emph{structure algebras} of moment graphs associated with finite root systems. These two were recently connected  in the work of Lanini--Zainoulline \cite{LZ18}. 

A \emph{folding} of a root system is a procedure long known in the literature to exploit the symmetry of a given root system and yield a smaller root system of a different type (e.g. Steinberg \cite{St67}). There are four families of classical foldings of finite crystallographic root systems: foldings from type $A_{2n-1}$ to $C_n \, (n \geq 2),$ from type $D_{n+1}$ to $B_n \, (n \geq 3),$ from type $E_6$ to $F_4,$ and from type $D_4$ to $G_2$ (cf. Springer \cite[Section~10]{Springer}), in which each folding starts with an automorphism of the Dynkin diagram of the given root system. The other well-known folding is the Lusztig folding from type $E_8$ to $H_4$ discovered by Lusztig in \cite[Section~3.9 (b)]{Lusztig83}, in which the folding begins with an automorphism of the root lattice of type $E_8$ not induced by an automorphism of the Dynkin diagram (cf. Moody--Patera \cite{MP93}). Recently, Lanini--Zainoulline~\cite{LZ18} introduced the notion of a \emph{twisted quadratic folding} of a root system which explains the construction of both classical quadratic foldings (those listed above except the one from type $D_4$ to $G_2$) and the Lusztig folding on the same footing via the notion of a \emph{folded representation} of a root system. Let $\basek$ be a unital subring of $\R,$ $\basel$ be a free quadratic $\basek$-algebra given by $\basel = \basek[x]/(p(x))$ for some monic separable $\basek$-polynomial $p(x) = x^2 -c_1x +c_2,$ and $\tau$ be one of the two distinct roots of $p(x)$ in $\basel.$ A free $\basek$-module $U$ obtained from some free $\basel$-module $\ku$ by restriction of scalars is equipped with a $\basek$-linear map $\tauo$ given by multiplication by $\tau$ on $\ku.$ 
A folded representation realizes a finite crystallographic root system $\Root$ as a subset of $U$ so that its root lattice is stable under $\tauo$ and its simple roots admit a certain partition with respect to $\tauo$ (\cite[Definition~3.1]{LZ18}). Then the \emph{$\tau$-twisted folding} of $\Root$ is the procedure to pass from $\Root$ to a new finite root system $\Root_{\tau}$ under the projection onto the $\tau$-eigenspace of $\tauo.$ In this framework, the classical foldings are obtained by taking $p(x)=x^2-1,$ and the Lusztig folding is obtained by taking $p(x)=x^2-x-1$ (\cite[Examples~4.3--4.5]{LZ18}).

Our second objects of study, the notions of a \emph{structure algebra} and its \emph{Schubert classes}, stem from the study of equivariant cohomology rings of flag varieties and its generalization to arbitrary finite Coxeter groups.
Let $G$ be a connected semisimple linear algebraic group over $\C$ with a maximal torus $T,$ a Borel subgroup $B$  containing $T,$ and the Weyl group $\Weyl.$ Let $\Lattice$ be the character group of $T$ and $\sym:=\Sym_{\Z}\Lattice$ be the symmetric algebra. Then the structure algebra associated with $\Weyl$ is given by
\begin{equation*} 
\sa{\moment} = \{\xi \in \bigoplus_{v \in \Weyl} \sym \mid \xi(v)-\xi(v') \,\, \mbox{is divisible by} \,\, \alpha \in \Root^+ \, \mbox{if} \, v <_{\alpha} v'\},
\end{equation*}
where $\xi(v)$ denotes the $v$-coordinate of $\xi$ for each $v\in \Weyl,$ $<_{\alpha}$ denotes the Bruhat cover relation on $\Weyl$ and $\moment$ is a labelled graph called a \emph{moment graph} (Fiebig \cite[Sections~3]{F08coxeter}), which in this case coincides with the Bruhat graph of $\Weyl.$ The structure algebra $\sa{\moment}$ is an $\sym$-algebra under the coordinate-wise addition and multiplication, and by a famous result of Goresky--Kottwitz--MacPherson \cite[Theorem~7.2]{GKM97}, the $T$-equivariant cohomology ring $\tcoho{\bullet}{G/B, \Z}$ is isomorphic to $\sa{\moment}$ as $\sym$-algebras via the so-called localization map (see also \cite[Theorem~8.11]{CZZ15}). The notions of a moment graph and its structure algebra can be extended to any finite real reflection groups by an appropriate choice of $\sym.$

It is well-known that the ordinary cohomology ring $\coho{\bullet}{G/B, \R}$ has two descriptions, one via the additive basis called Schubert classes (cf. Borel \cite{Borel54}) and the other via the so-called coinvariant algebra $\sym/\sym^{\Weyl}_+$ (cf. Borel \cite{Borel53}), where $\sym^{\Weyl}_+$ is the ideal generated by $\Weyl$-invariant polynomials with no constant terms. In their independent work, Bernstein--Gelfand--Gelfand~\cite{BGG73} and Demazure~\cite{Demazure73} constructed polynomials in the coinvariant algebra which correspond to the Schubert classes of $\coho{\bullet}{G/B}$ under Borel's isomorphism. Hiller~\cite{Hiller82} proved that the polynomials constructed in \cite{BGG73} form an additive basis of the coinvariant algebra for any finite real reflection group $\Weyl.$ 
Kaji \cite{Ka11} extended Hiller's result to the equivariant setting, constructing additive basis elements in the double coinvariant algebra $\sym \otimes_{\sym^{\Weyl}} \sym$ for any finite real reflection group $\Weyl,$ which in case $\Weyl$ is crystallographic correspond to the Schubert classes of $\tcoho{\bullet}{G/B}$ under the Borel-type isomorphism from $\tcoho{\bullet}{G/B}$ to $\sym \otimes_{\sym^{\Weyl}} \sym$ (\cite[Proposition~4.1]{Ka11}). He further showed an isomorphism from the double coinvariant algebra to the structure algebra $\sa{\moment}$ (\cite[Corollary~4.10]{Ka11}), thereby giving an explicit formula \eqref{eq:schubert0} for the images $\schubert{w}$ ($w \in \Weyl$) of his double Schubert polynomials under this isomorphism: for $w, v \in \Weyl$ and $v=\reflection{1}\cdots\reflection{\ell}$ any reduced expression,
\begin{equation} \label{eq:schubert0}
\schubert{w}(v) = \sum \prod_{p=1}^{\length{w}} \mathbf{r}_{i_p}(\alpha_{i_p}),
\end{equation}
where  the sum runs over all $1 \leq i_1 < \cdots < i_{\length{w}} \leq \ell$ such that $\reflection{i_1}\cdots\reflection{i_{\length{w}}}=w,$ $\alpha_{j}$ is a simple root corresponding to $\reflection{j},$ and $\mathbf{r}_{i_p}(\alpha_{i_p})=\reflection{1}\reflection{2}\cdots\reflection{i_p-1}(\alpha_{i_p}).$

Let $\Root \rightsquigarrow \Root_{\tau}$ be a twisted quadratic folding of a finite crystallographic root system.
Let $\Weyl$ and $\TWeyl$ be the finite reflection groups of $\Root$ and $\Root_{\tau}$ respectively. The folding $\Root \rightsquigarrow \Root_{\tau}$ induces an embedding $\embedding: \TWeyl \hra \Weyl$ of Coxeter groups (cf. Lusztig  \cite[Corollary~3.3]{Lusztig83}). If we denote the moment graph associated with the original root system $\Root$ by $\moment$ and the one associated with the $\tau$-folded root system $\Root_{\tau}$ by $\tmoment,$ the main result of Lanini--Zainoulline \cite[Theorem~6.2]{LZ18} states that there is a ring homomorphism $\embedding^*: \sa{\moment} \ra \sa{\tmoment}$ induced from the embedding $\embedding.$ 
This paper studies the Schubert calculus of the folding map $\embedding^*.$ We take \eqref{eq:schubert0} as definition of a Schubert class of $\sa{\moment}$ (Definition~\ref{def:schubert}), and investigate how the Schubert classes of the original structure algebra $\sa{\moment}$ relate to those of the folded structure algebra $\sa{\tmoment}$ under $\embedding^*.$
As a first step to understand the $\embedding^*$-preimages of the Schubert classes of $\sa{\tmoment},$ we find an answer to the following question.

\begin{question} \label{q:lifting}
Which Schubert class $\tschubert{u} \in \sa{\tmoment}$ admits a Schubert class $\schubert{w} \in \sa{\moment}$ satisfying the following condition?
\begin{equation} \label{eq:liftingrel}
\embedding^*(\schubert{w})=c\cdot \tschubert{u} \,\,  \mbox{for some} \,\, c \in \basel^{\times}.
\end{equation}
\end{question}

\begin{definition} \label{def:schubertlifting}
We say that a Schubert class $\tschubert{u}$ \emph{is liftable} or \emph{admits a lifting} if there exists some $\schubert{w} \in \sa{\moment}$ satisfying \eqref{eq:liftingrel}.
Such a Schubert class $\schubert{w}$ is called a \emph{lifting} of $\tschubert{u},$ and we say that $\tschubert{u}$ \emph{lifts to} $\schubert{w}.$
\end{definition}
\noindent
Our interesting discovery is that  the condition for a given Schubert class $\tschubert{u}$ of $\sa{\tmoment}$ to admit a lifting in $\sa{\moment}$ involves only the existence of an element $w \in \Weyl$ satisfying certain combinatorial conditions in relation to $u \in \TWeyl.$ More precisely, we introduce a map $\adm: S \ra S_{\tau}$ on the Coxeter generating sets of $\Weyl$ and $\TWeyl$ (Section~\ref{sec:comfolding}), and define the \emph{folding subset} $\foldingset{\Weyl}$ of $\Weyl$ in terms of how $\adm$ behaves on the reduced expressions of each element $w \in \Weyl$ (Definition~\ref{def:foldingset}). Our main result is the following.

\begin{theorem}[Corollary~\ref{prop:liftingcondition2}, Proposition~\ref{prop:liftingcoefficient}] \label{Mthm1}
Let $u \in \TWeyl$ and $w \in \Weyl.$ Let $w=\reflection{i_1}\cdots\reflection{i_{\ell}}$ be any $\Weyl$-reduced expression. Then the Schubert class $\schubert{w} \in \sa{\moment}$ is a lifting of $\tschubert{u} \in \sa{\tmoment}$ if and only if $w \in \foldingset{\Weyl}$ and $u = \adm(\reflection{i_1})\cdots\adm(\reflection{i_{\ell}}).$ Moreover, the coefficient $c$ in \eqref{eq:liftingrel} equals $\tau^m$ for some unique nonnegative integer $m,$ where $\tau$ is the parameter appearing in the definition of the underlying twisted quadratic folding.
\end{theorem}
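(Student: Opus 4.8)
The plan is to translate the statement about Schubert classes into a statement about the explicit GKM-type formula \eqref{eq:schubert0}, and then track how the ring homomorphism $\embedding^*$ acts on these coordinates. First I would unwind what $\embedding^*(\schubert{w}) = c\cdot\tschubert{u}$ means coordinate-wise: for every $v\in\TWeyl$, we need $\schubert{w}(\embedding(v)) = c\cdot\tschubert{u}(v)$, using the fact (from \cite[Theorem~6.2]{LZ18}) that $\embedding^*$ is given on coordinates by $\embedding^*(\xi)(v) = \tauproj(\xi(\embedding(v)))$ — that is, one evaluates the original Schubert class at the embedded element and then applies the projection onto the $\tau$-eigenspace that defines the folded root system. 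So the core of the proof is comparing the two sums
\begin{equation*}
\tauproj\Bigl(\sum \prod_{p=1}^{\length{w}} \mathbf{r}_{i_p}(\alpha_{i_p})\Bigr)
\quad\text{and}\quad
c\cdot \sum \prod_{q=1}^{\tlength{u}} \mathbf{r}_{j_q}(\beta_{j_q}),
\end{equation*}
where the first sum ranges over subwords of a reduced word for $\embedding(v)$ in $\Weyl$ spelling $w$, and the second over subwords of a reduced word for $v$ in $\TWeyl$ spelling $u$.

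The key structural input is the combinatorial machinery developed earlier in the paper: the map $\adm\colon S\ra S_{\tau}$, the folding subset $\foldingset{\Weyl}$, and the behaviour of $\adm$ on reduced words. I would first establish, presumably already proved in the sections preceding this theorem (Section~\ref{sec:comfolding} and Definition~\ref{def:foldingset}), that applying $\adm$ letter-by-letter to any $\Weyl$-reduced word sends a $\Weyl$-reduced expression of $w$ to a $\TWeyl$-reduced expression precisely when $w\in\foldingset{\Weyl}$, and that in that case the product $\adm(\reflection{i_1})\cdots\adm(\reflection{i_{\ell}})$ is a well-defined element of $\TWeyl$ independent of the chosen reduced word. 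Granting this, the ``only if'' direction of the theorem follows by choosing $v$ cleverly: take $v$ with $\embedding(v)$ chosen so that the subword-sum for $\schubert{w}$ is controlled — for instance $v$ with $\length{\embedding(v)}$ matching $\tlength{u}$ or $v=u$ itself under a section of $\embedding$ — and read off that $w$ must lie in $\foldingset{\Weyl}$ and that $\adm$ applied to its reduced word spells $u$; if either condition fails, one exhibits a coordinate where the right side vanishes but the left does not, or vice versa. The ``if'' direction is the substantive computation: assuming $w\in\foldingset{\Weyl}$ and $u=\adm(\reflection{i_1})\cdots\adm(\reflection{i_{\ell}})$, show that $\tauproj$ applied to each monomial $\prod_p \mathbf{r}_{i_p}(\alpha_{i_p})$ produces $\tau^m$ times the corresponding monomial $\prod_q \mathbf{r}_{j_q}(\beta_{j_q})$ for the folded root system, with the same exponent $m$ for every term (this uniformity is what makes the common scalar $c=\tau^m$ pull out), and that the subwords of $\embedding(v)$ spelling $w$ are in bijection — via $\adm$ — with the subwords of $v$ spelling $u$.

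The main obstacle I expect is controlling the interaction of $\tauproj$ with the partial products $\mathbf{r}_{i_p}(\alpha_{i_p}) = \reflection{1}\cdots\reflection{i_p-1}(\alpha_{i_p})$: one must show that reflecting a simple root of $\Root$ by an initial segment of a word coming from $\TWeyl$, and then projecting to the $\tau$-eigenspace, agrees up to a power of $\tau$ with reflecting the corresponding simple root of $\Root_{\tau}$ by the $\adm$-image of that initial segment. This requires exploiting how the folded representation and the operator $\tauo$ interact with the $\Weyl$-action — in particular that $\embedding(\TWeyl)$ normalizes the relevant structures and that $\tauproj$ is $\TWeyl$-equivariant — together with a careful bookkeeping of which simple roots of $\Root$ sit in one-element versus two-element blocks of the partition defining the folded representation, since the roots in two-element blocks are exactly where factors of $\tau$ (as opposed to $1$) are introduced. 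A secondary technical point is verifying that the exponent $m$ is genuinely independent of the summand and of $v$, which I would pin down by a length/support argument: $m$ should count the number of letters of a reduced word for $w$ whose $\adm$-images come from the ``quadratic'' blocks, an invariant of $w\in\foldingset{\Weyl}$ by the reduced-word analysis, and its uniqueness as a nonnegative integer is then immediate since $\tau$ is not a root of unity (indeed $\tau\in\basel\subseteq\R$ or its quadratic extension with $\tau\neq 0,\pm1$ in the relevant cases).
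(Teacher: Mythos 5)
Your approach diverges from the paper's in a way that, while not wrong in spirit, introduces a substantial gap you have not closed. You propose to prove $\embedding^*(\schubert{w}) = c\cdot\tschubert{u}$ by matching the GKM coordinate formula \eqref{eq:schubert0} at \emph{every} $v\in\TWeyl$, which requires (i) a bijection between subwords of a reduced word for $\embedding(v)$ spelling $w$ and subwords of a reduced word for $v$ spelling $u$, and (ii) the fact that every matched pair of monomials differs by the \emph{same} power $\tau^m$, uniformly over all $v$ and all summands. You flag (ii) as a "secondary technical point," but it is in fact the crux, and it is unclear how you would establish the uniformity without already knowing that $\embedding^*(\schubert{w})$ is proportional to $\tschubert{u}$. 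Likewise your "only if" direction is vague ("take $v$ cleverly... and read off"); it does not explain how a single coordinate forces $w\in\foldingset{\Weyl}$.

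The paper takes a different, lighter route. It first proves Lemma~\ref{ob}, a support-and-degree characterization: an element of $\sa{\tpmoment}$ is a nonzero scalar multiple of $\tschubert{u}$ if and only if it is homogeneous of degree $\tlength{u}$, vanishes at all $u'\ngeq u$, and is nonzero at $u$. Thus proving the lifting relation reduces to three Bruhat/length conditions on $w$ (Proposition~\ref{prop:liftingcondition}): $\length{w}=\tlength{u}$, $w\leq\embedding(u)$, and $w\nleq\embedding(u')$ for $u\nleq u'$. These conditions involve no coordinate computation at all; they are translated into the combinatorial condition $w\in\foldingset{\Weyl}$, $u=\lbar{\adm}(w)$ via the subword machinery (Lemma~\ref{prop:phi}, Proposition~\ref{prop:embeddingbruhat}, Propositions~\ref{prop:c3rephrase1}--\ref{prop:c3rephrase2}). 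Only \emph{after} the scalar relation $\embedding^*(\schubert{w})=c\cdot\tschubert{u}$ is known to hold does the paper invoke the explicit formula — and then only at the single vertex $\embedding(u)$, where the reduced word for $\embedding(u)$ obtained by applying $\embedding$ to a reduced word for $u$ exhibits $w$ as a distinguished subword, yielding one monomial whose $\tauproj$-image, via Lemma~\ref{prop:embeddingproperty}, is $\tau^m$ times $\tschubert{u}(u)$ (Proposition~\ref{prop:liftingcoefficient}). This single-vertex computation is exactly what your approach would need at \emph{every} vertex, which is why yours is much heavier. You do correctly identify the needed ingredients (the folding subset, the behaviour of $\adm$ on reduced words, the $\TWeyl$-equivariance of $\tauproj$, the role of simple roots in two-element $\adm$-fibres as the source of $\tau$-factors), but the architecture built around them — direct coordinate matching instead of the degree/support reduction — leaves the key uniformity unproved.
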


If every Schubert class of $\sa{\tmoment}$ admits a lifting, then we say that $\embedding^*$ has the \emph{lifting property}. In general, $\embedding^*$ does not have this property. Even in the smallest examples of a twisted quadratic folding from type $A_3$ to $C_2$ or from type $A_4$ to $H_2,$ the Schubert class of $\sa{\tmoment}$ corresponding to the longest element $u_0$ of $\TWeyl$ is not liftable (cf. Example~\ref{egg:typeA4lifting}). Our criterion for nonliftability (Corollary~\ref{prop:noliftingcondition}) suggests that as the size of the folded root system increases, there will be more and more nonliftable Schubert classes in $\sa{\tmoment}.$ However, by passing to the parabolic setting, in some cases, it is possible to choose a parabolic subgroup $\Weyl_P \subsetneq \Weyl$ so that the folding map $\embedding^*: \sa{\pmoment} \ra \sa{\tpmoment}$ does have the lifting property (Example~\ref{egg:typeD6lifting}). As an application of Theorem~\ref{Mthm1}, in Section~\ref{sec:liftingprop}, we provide a list of parabolic subgroups $\Weyl_P$ such that the folding map $\embedding^*: \sa{\pmoment} \ra \sa{\tpmoment}$ has the lifting property (Proposition~\ref{prop:liftingprop}).

The paper is organized as follows. In Section~\ref{sec:coxeter}, we recall the notions of a root datum and a root system and some facts about finite Coxeter groups essential to the later discussion. In Sections~\ref{sec:moment}--\ref{sec:schubert}, we introduce the notions of a \emph{parabolic moment graph} and its \emph{structure algebra}, then discuss a distinguished basis of the structure algebra called \emph{(combinatorial) Schubert classes}. In Section~\ref{sec:folding}, we briefly summarize the Lanini-Zainoulline construction of a \emph{twisted quadratic folding} of a root system and introduce our main object of study, the induced folding map $\embedding^*: \sa{\pmoment} \ra \sa{\tpmoment}$ between the structure algebras.
Sections~\ref{sec:comfolding}--\ref{sec:liftingprop} investigate the liftings of Schubert classes under the folding map $\embedding^*.$  
In Section~\ref{sec:comfolding}, we define a combinatorial folding map $\adm: S \ra S_{\tau}$ and prove its key properties. In Section~\ref{sec:lifting}, we prove Theorem~\ref{Mthm1} and describe some examples. Section~\ref{sec:nolifting} discusses how to detect nonliftable Schubert classes and provides a useful criterion (Corollary~\ref{prop:noliftingcondition}). Finally, Section~\ref{sec:liftingprop} applies the results of Sections~\ref{sec:lifting} and \ref{sec:nolifting} to each known twisted quadratic folding to provide a complete list of parabolic subsets $P \subset S$ such that the folding map $\embedding^*: \sa{\pmoment} \ra \sa{\tpmoment}$ has the lifting property.

\section{Coxeter groups and root systems} \label{sec:coxeter}
This section sets the notation and collects some facts about root data, finite root systems, and finite Coxeter groups that will be frequently used throughout the text.

\subsection{Root data and root systems} \label{ssec:roots}

This paper works with two approaches to a finite root system, that is, the notion of a root datum following \cite[Expos{\'e}~XXI]{SGA} and that of a finite root system following \cite[Section~1.2]{Humphreys}.

We denote a root datum by $\Root \hra \dual{\Lattice},$ where the convention is as follows: 
\begin{inparaenum}[1.)]
\item $\Lattice$ is a free $\Z$-module of finite rank,
\item $\Root \subset \Lattice$ is a nonempty finite subset,
\item $\dual{\Lattice}$ is the dual $\Z$-module,
\item $\dual{\alpha}$ denotes the image of $\alpha \in \Root,$
\item $\reflection{\alpha}(x) = x - \dual{\alpha}(x) \alpha$ for $x \in \Lattice,$
\item $\Weyl(\Root) := \langle \reflection{\alpha} \mid \alpha \in \Root \rangle \leq \GL_{\Z}(\Lattice)$ is the Weyl group of $\Root.$ 
\item $\Simple$ denotes a simple system,
\item $\RLattice := \spn_{\Z}(\Root)$ is the root lattice.
\end{inparaenum}
We assume that a root datum is reduced, that is, if $\Root \cap c \Root \neq \emptyset$ for some $c \in \Z$, then $c=\pm 1$.

\begin{definition}\label{def:georep}
Let $\Root \hra \dual{\Lattice}$ be a root datum and $\basek \subset \R$ be a (unital) subring. Let $U$ be a free $\basek$-module containing $\Lattice \otimes_{\Z} \basek$ as a $\basek$-submodule. Let $\blform{-}{-}$ denote a dot product on $U.$ If $\dual{\alpha}(x)=\frac{2\blform{\alpha}{x}}{\blform{\alpha}{\alpha}}$ for all $\alpha \in \Root$ and $x \in \Lattice$, we call the inclusion $\Root \subset \Lattice \hra \Lattice \otimes_{\Z} \basek \subset U$ a \emph{geometric $\basek$-representation} of the root datum and denote it by $(\Root, U)_{\basek}.$ By extending the formula of $\reflection{\alpha}$ $(\alpha \in \Root)$ over $U$, we obtain $\reflection{\alpha} \in \GL_{\basek}(U).$ We define the Weyl group of $(\Root, U)_{\basek}$ to be the subgroup $\langle \reflection{\alpha} \mid \alpha \in \Root \rangle$ of $\GL_{\basek}(U).$
\end{definition}

Let $\IE$ be a Euclidean space with a dot product $\blform{-}{-}.$
We denote a root system by $\Root$ and use the following convention:
\begin{inparaenum}[1.)]
\item $\Root \subset \IE \backslash \{0\}$ is a nonempty finite subset,
\item $\reflection{\alpha}(x) = x - \tfrac{2\blform{\alpha}{x}}{\blform{\alpha}{\alpha}}\alpha$ for  $x \in \IE,$
\item $\Weyl(\Root) := \langle \reflection{\alpha} \mid \alpha \in \Root \rangle \leq \GL_{\R}(\IE)$ is the Weyl group of $\Root.$ 
\end{inparaenum}
We assume that a root system is reduced, that is, if $\Root \cap c \Root \neq \emptyset$ for some $c \in \R$, then $c=\pm 1$.
The \emph{minimal coefficient ring} of $\Root$ is the minimal subring $\basek_{min} \subset \R$ containing all the values $\tfrac{2\blform{\alpha_i}{\alpha_j}}{\blform{\alpha_i}{\alpha_i}}$ where $\alpha_i, \alpha_j \in \Simple.$ If $\Root$ is crystallographic, $\basek_{min}=\Z.$

\begin{definition} \label{def:rootsystemK}
Let $\Root$ be a root system with the minimal coefficient ring $\basek_{min}$ and $\basek \subset \R$ be any subring containing $\basek_{min}.$ Let $U$ be a free $\basek$-module equipped with a dot product $\blform{-}{-}.$ Then $\Root$ admits a realization $\Root \subset U$ satisfying the defining conditions of a root system. The realization $\Root \subset U$ is called a \emph{$\basek$-realization} of the root system. We define the Weyl group of $\Root \subset U$ to be the subgroup $\langle \reflection{\alpha} \mid \alpha \in \Root \rangle$ of $\GL_{\basek}(U).$
\end{definition}
\noindent
The two distinct notions as defined in Definitions~\ref{def:georep} and \ref{def:rootsystemK} will be used in the construction of a twisted quadratic folding in Section~\ref{sec:folding}, where the input of a folding is a root datum and the output is a not necessarily crystallographic finite root system.

\subsection{Coxeter groups} \label{ssec:coxeter}
Let $(\Weyl, S)$ be a finite Coxeter system. We use the following notation:
\begin{inparaenum}[1)]
\item $m(s,t) \in \Z_{\geq 2}$ is the order of $st$ in $\Weyl$ for $s, t \in S,$
\item $l: \Weyl \ra \Z_{\geq 0}$ is the length function,
\item $(\Weyl, \leq)$ is the Bruhat poset,
\item $\FM{S}$ is the free monoid generated by $S,$
\item $[s, t]_{m}$ is the word $stst \cdots \in \FM{S}$ of length $m,$
\item $\red{w} \subset \FM{S}$ is the set of all reduced words representing $w \in \Weyl,$
\item $\red{\Weyl} := \bigcup_{w \in \Weyl} \red{w},$
\item $\Weyl_P := \langle s \mid s \in P \rangle \leq \Weyl$ for a (possibly empty) subset $P \subset S,$
\item $\Weyl^P := \{w \in \Weyl \mid \length{ws}>\length{w} \,\, \text{for all} \,\, s \in P\},$
\item $\lbar{w}$ is the unique element of $w\Weyl_P \cap \Weyl^P$ for $w \in \Weyl.$
\end{inparaenum}

The following list of well-known results will be frequently used in the main discussion. The reader is referred to \cite{Brenti} and \cite{Humphreys} for the details.

\begin{lemma}[{\cite{Humphreys}}] \label{prop:coxeter} 
\begin{enumerate}[\normalfont (a)] 
\item Let $w \in \Weyl,$ $\alpha \in \Root^+.$ Then $\length{w\reflection{\alpha}} > \length{w}$ if and only if $w(\alpha) \in \Root^+.$ Equivalently, $\length{\reflection{\alpha}w} > \length{w}$ if and only if $w\inv(\alpha) \in \Root^+.$
\item Let $w, w' \in \Weyl$. Let $w = \cgen{1} \cdots \cgen{k}$ be a reduced expression. Then $w' \leq w$ if and only if $w'$ admits a reduced expression which is a subexpression of $\cgen{1} \cdots \cgen{k}$. 
\end{enumerate}
\end{lemma}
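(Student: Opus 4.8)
The two assertions are classical facts about finite Coxeter systems, and the plan is to derive each from the exchange condition together with the interaction between the length function and the root system, in the spirit of \cite[\S1.6--1.7 and \S5.7--5.10]{Humphreys} (see also \cite{Brenti}).

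For part (a), I would use the \emph{inversion set} $N(w):=\{\alpha\in\Root^{+}\mid w(\alpha)\notin\Root^{+}\}=\Root^{+}\cap w\inv(\Root\setminus\Root^{+})$ of an element $w\in\Weyl$. The starting point is the standard identity $\length{w}=|N(w)|$, proved by induction on $\length{w}$ using that a simple reflection $\reflection{i}$ permutes $\Root^{+}\setminus\{\alpha_{i}\}$ and negates $\alpha_{i}$. For a \emph{simple} reflection $s=\reflection{i}$ this already gives the claim, since $N(ws)$ and $N(w)$ differ only in the element $\alpha_{i}$, which lies in $N(ws)$ exactly when $\alpha_{i}\notin N(w)$, i.e.\ exactly when $w(\alpha_{i})\in\Root^{+}$; hence $\length{ws}=\length{w}+1\iff w(\alpha_{i})\in\Root^{+}$. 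To treat a general $\alpha\in\Root^{+}$, I would write $\alpha=v(\alpha_{i})$ with $v\in\Weyl$, $\alpha_{i}$ simple, and $\length{v\reflection{i}}>\length{v}$ — taking the middle simple reflection of a reduced expression of $\reflection{\alpha}$ — so that $\reflection{\alpha}=v\reflection{i}v\inv$ and $w(\alpha)=(wv)(\alpha_{i})$; the strong exchange condition (or induction on $\length{\reflection{\alpha}}$) then reduces $\length{w\reflection{\alpha}}>\length{w}\iff w(\alpha)\in\Root^{+}$ to the simple case applied to the pair $(wv,\reflection{i})$. The second formulation in (a) follows from the first applied to $w\inv$, together with $\length{w}=\length{w\inv}$ and $(\reflection{\alpha}w)\inv=w\inv\reflection{\alpha}$.

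For part (b), the subword property of the Bruhat order, the key tool is the \emph{lifting property} (a standard consequence of the exchange condition), which I would use in the forms: ($\alpha$) if $u\le w$, $\length{ws}<\length{w}$ and $\length{us}<\length{u}$, then $us\le ws$; ($\beta$) if $u\le w$, $\length{ws}<\length{w}$ and $\length{us}>\length{u}$, then $u\le ws$; ($\gamma$) if $x\le y$, $\length{xs}>\length{x}$ and $\length{ys}>\length{y}$, then $xs\le ys$. Both implications then follow by induction on $\length{w}$, with $w=e$ vacuous. Fix the reduced expression $w=\cgen{1}\cdots\cgen{k}$ and put $s:=\cgen{k}$, $v:=\cgen{1}\cdots\cgen{k-1}$ (reduced, of length $k-1$, with $v<w$). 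For the "only if" direction, let $w'\le w$; we may assume $w'\ne w$. If $\length{w's}<\length{w'}$, then ($\alpha$) gives $w's\le v$, and by induction the reduced word $\cgen{1}\cdots\cgen{k-1}$ for $v$ contains a subword that is a reduced expression of $w's$; appending $\cgen{k}$ produces a reduced expression of $(w's)s=w'$ (reduced, since $\length{w'}=\length{w's}+1$) that is a subword of $\cgen{1}\cdots\cgen{k}$. If $\length{w's}>\length{w'}$, then ($\beta$) gives $w'\le v$, and by induction $\cgen{1}\cdots\cgen{k-1}$ already contains a reduced expression of $w'$, a fortiori a subword of $\cgen{1}\cdots\cgen{k}$. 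Conversely, for the "if" direction, suppose $w'$ has a reduced expression $\cgen{a_{1}}\cdots\cgen{a_{\ell}}$ ($a_{1}<\cdots<a_{\ell}$) that is a subword of $\cgen{1}\cdots\cgen{k}$, and induct on $k$; if $\ell=k$ then $w'=w$. If $a_{\ell}<k$, then $\cgen{a_{1}}\cdots\cgen{a_{\ell}}$ is a reduced subword of the reduced word $\cgen{1}\cdots\cgen{k-1}$ for $v$, so $w'\le v<w$ by induction. If $a_{\ell}=k$ (and $\ell<k$), then $w's=\cgen{a_{1}}\cdots\cgen{a_{\ell-1}}$, being a prefix of a reduced word, is a reduced expression of $w's$ with $\length{w's}<\length{w'}$, and it is a subword of $\cgen{1}\cdots\cgen{k-1}$; by induction $w's\le v$, and ($\gamma$) applied to $x=w's$, $y=v$ and $s$ (note $\length{xs}=\length{w'}>\length{x}$ and $\length{ys}=\length{w}>\length{y}$) gives $w'=(w's)s\le vs=w$.

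The main obstacle is the "only if" direction of (b): one must establish the lifting property from the exchange/deletion conditions and then arrange the case analysis so that the alternatives $\length{w's}<\length{w'}$ and $\length{w's}>\length{w'}$ are exhaustive and each yields a genuine \emph{reduced} subword — throughout, one has to respect the distinction between a subword and a reduced subword, since a subword of a reduced word need not be reduced. In part (a) the only delicate point is the passage from simple to non-simple positive roots, where $N(w\reflection{\alpha})$ is no longer simply $N(w)\mathbin{\triangle}\{\alpha\}$; the strong exchange condition is the standard device for this. As both statements are textbook material, in the final write-up I would simply cite \cite[\S1.6--1.7 and \S5.7--5.10]{Humphreys} and \cite{Brenti}, keeping only the indications above.
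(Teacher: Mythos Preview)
The paper gives no proof at all: Lemma~\ref{prop:coxeter} is stated with a bare citation to \cite{Humphreys} (and \cite{Brenti} is mentioned in the surrounding text), exactly as you yourself propose for the final write-up. So there is nothing to compare against, and your plan to simply cite is precisely what the paper does.

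Your sketch is the standard textbook argument and is fine for part~(b). One small imprecision in part~(a): the reduction ``apply the simple case to the pair $(wv,\reflection{i})$'' does not quite work as written, since from $\reflection{\alpha}=v\reflection{i}v^{-1}$ you get $wv\reflection{i}=w\reflection{\alpha}v$, and $\length{w\reflection{\alpha}v}>\length{wv}$ is not the same as $\length{w\reflection{\alpha}}>\length{w}$. The clean way (and the way Humphreys does it) is to argue directly: if $w(\alpha)\in\Root^{-}$, walk along a reduced expression $w=\cgen{1}\cdots\cgen{k}$ to find the first index $i$ where $\cgen{i}\cdots\cgen{k}(\alpha)$ becomes negative, deduce $\cgen{i+1}\cdots\cgen{k}(\alpha)$ is the simple root of $\cgen{i}$, and conclude $w\reflection{\alpha}=\cgen{1}\cdots\widehat{\cgen{i}}\cdots\cgen{k}$; the converse follows by applying this to $w\reflection{\alpha}$ and $-\alpha$. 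You correctly flag this as the delicate step and name the strong exchange condition, so this is only a matter of tightening the exposition if you ever do write it out.
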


\noindent
We identify $\Weyl$ with a finite real reflection group acting on a finite dimensional Euclidean space $U$ via the standard geometric representation.
Let $\theta \in U$ be a dominant vector with respect to a simple system $\Simple$ whose stabilizer subgroup is $W_P$.
The $W$-orbit $W\theta$ is equipped with a partial order given by taking the transitive closure of the following binary  relations for $x \in U:$
\begin{equation*} 
x < \reflection{\alpha}(x) \,\, \text{for all} \,\, \alpha \in \Root^+ \,\, \text{with} \,\, \blform{x}{\alpha} > 0.
\end{equation*}

\begin{lemma}[{\cite[Proposition~1.1]{Stembridge}}] \label{prop:orbit}
The orbit map $\Weyl \ra \Weyl\theta: w \mapsto w(\theta)$ restricts to a poset isomorphism $(\Weyl^P, \leq) \simeq (\Weyl \theta, \leq).$
\end{lemma}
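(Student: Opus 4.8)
The plan is to establish the poset isomorphism $(\Weyl^P, \leq) \simeq (\Weyl\theta, \leq)$ by first showing that the orbit map $w \mapsto w(\theta)$ restricts to a \emph{bijection} $\Weyl^P \to \Weyl\theta$, and then checking that both it and its inverse are order-preserving. For the bijection, recall that $\theta$ is dominant with stabilizer $\Weyl_P$, so by the standard theory of parabolic subgroups each coset $w\Weyl_P$ contains a unique minimal-length representative, namely the unique element of $w\Weyl_P \cap \Weyl^P$ (denoted $\lbar{w}$ in the notation of Section~\ref{ssec:coxeter}). Since $w(\theta) = w'(\theta)$ if and only if $w\inv w' \in \Stab(\theta) = \Weyl_P$, i.e.\ if and only if $w\Weyl_P = w'\Weyl_P$, the map $\Weyl\theta \to \Weyl/\Weyl_P$ sending $w(\theta) \mapsto w\Weyl_P$ is a well-defined bijection; composing with the standard bijection $\Weyl/\Weyl_P \simeq \Weyl^P$ gives that the orbit map restricted to $\Weyl^P$ is a bijection onto $\Weyl\theta$.

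Next I would prove that the orbit map is order-preserving on $\Weyl^P$. The partial order on $\Weyl\theta$ is, by definition, the transitive closure of the covering-type relations $x < \reflection{\alpha}(x)$ for $\alpha \in \Root^+$ with $\blform{x}{\alpha} > 0$; so it suffices to show that if $v \in \Weyl^P$ and $v(\theta) < \reflection{\alpha} v(\theta)$ for some $\alpha \in \Root^+$, then $v < \lbar{\reflection{\alpha}v}$ in the Bruhat order on $\Weyl^P$. Writing $x = v(\theta)$, the hypothesis $\blform{x}{\alpha} > 0$ together with dominance of $\theta$ forces $v\inv(\alpha) \in \Root^+$ (here one uses that $\blform{\theta}{v\inv \alpha} = \blform{x}{\alpha} > 0$ and that the only roots pairing positively with the dominant $\theta$ up to the $\Weyl_P$-ambiguity are positive — one must be slightly careful because $\theta$ lies on some walls, but $v\inv(\alpha)$ cannot be a root in $\Root_P^+$ since that would give $\blform{\theta}{v\inv\alpha}=0$; I expect the cleanest route is to invoke Lemma~\ref{prop:coxeter}(a) applied to $v \in \Weyl^P$). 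Hence $\length{\reflection{\alpha} v} > \length{v}$ by Lemma~\ref{prop:coxeter}(a), and a short argument shows $\reflection{\alpha}v \in \Weyl^P$ as well (or one replaces it by $\lbar{\reflection{\alpha}v}$, which has the same orbit image and still Bruhat-dominates $v$). From $\length{\reflection{\alpha}v} > \length{v}$ and the exchange-type property, $v < \reflection{\alpha}v$ in Bruhat order, giving the desired monotonicity; transitivity of $\leq$ on $\Weyl\theta$ then propagates this to the full order.

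For the reverse direction — that the inverse map $\Weyl\theta \to \Weyl^P$ is order-preserving, equivalently that $v \leq w$ in $\Weyl^P$ implies $v(\theta) \leq w(\theta)$ in $\Weyl\theta$ — I would use Lemma~\ref{prop:coxeter}(b): take a reduced expression $w = \cgen{1}\cdots\cgen{k}$; since $v \leq w$, $v$ has a reduced expression that is a subword, so it suffices to check that deleting one generator from a reduced word (while staying in $\Weyl^P$, or rather comparing $\lbar{\cdot}$-representatives) moves $\theta$ down in the orbit order. Concretely, one shows $w(\theta) \geq (w\cgen{k})(\theta)$ whenever $\length{w\cgen{k}} < \length{w}$ and both are in $\Weyl^P$ — but this is exactly the monotonicity established in the previous paragraph read backwards (with $v = w\cgen{k}$, $\alpha$ the simple root of $\cgen{k}$), and then one iterates down the reduced word to reach any subword representing $v$, invoking transitivity.

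The main obstacle I anticipate is the bookkeeping around $\theta$ lying on the walls corresponding to $P$: one must consistently pass between $\Weyl$-elements and their minimal coset representatives in $\Weyl^P$ so that the length comparisons from Lemma~\ref{prop:coxeter}(a) remain valid, and one must verify that a reflection $\reflection{\alpha}$ with $\blform{v(\theta)}{\alpha}>0$ genuinely increases length on $\Weyl^P$ rather than merely permuting within a $\Weyl_P$-coset. I would handle this by systematically working with the representatives $\lbar{w} \in \Weyl^P$ and using the fact that $\length{\lbar{w}}$ equals the length of the coset $w\Weyl_P$, together with the characterization of $\Weyl^P$ via $\length{ws} > \length{w}$ for $s \in P$; everything else is a routine transitive-closure argument once the single covering step is pinned down in both directions.
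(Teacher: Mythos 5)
The paper does not supply a proof of this lemma; it is imported verbatim from Stembridge (\cite[Proposition~1.1]{Stembridge}), so there is no in-paper argument to compare yours against. On its own terms, your strategy (bijection, then monotonicity of the map and its inverse, both via single-reflection steps) is the standard one and is essentially sound, but two of the load-bearing steps are currently asserted rather than proved, and one of those assertions as literally written is false.

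In the order-preserving direction ($\Weyl^P\to\Weyl\theta$), you write that ``a short argument shows $\reflection{\alpha}v\in\Weyl^P$ as well.'' That is not true in general: for $v\in\Weyl^P$ and $\alpha\in\Root^+$ with $v\inv(\alpha)\in\Root^+$, the element $\reflection{\alpha}v$ need not be a minimal coset representative. Your parenthetical fallback (replace it by $\lbar{\reflection{\alpha}v}$) is the right move, but the assertion that $\lbar{\reflection{\alpha}v}$ ``still Bruhat-dominates $v$'' is exactly the nontrivial point, and it needs the standard parabolic-quotient fact (Bj\"orner--Brenti, Proposition~2.5.1): for $v\in\Weyl^P$ and $w\in\Weyl$, one has $v\le w$ if and only if $v\le\lbar{w}$. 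Invoking that fact makes the step correct; leaving it implicit leaves the step unsupported at the one place where something must actually be shown. (Your preceding deduction that $\blform{v(\theta)}{\alpha}>0$ forces $v\inv(\alpha)\in\Root^+\setminus\Root_P$, hence $\length{\reflection{\alpha}v}>\length{v}$ via Lemma~\ref{prop:coxeter}(a) and $\reflection{\alpha}v(\theta)\ne v(\theta)$, is fine.)

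In the reverse direction, the reduced-word deletion scheme has the difficulty you yourself flag: the intermediate prefixes $\cgen{1}\cdots\cgen{j}$ need not lie in $\Weyl^P$, so you cannot simply ``read the previous paragraph backwards'' within $\Weyl^P$. The clean repair is to prove the stronger statement that for \emph{any} $v\le w$ in $\Weyl$ (no $\Weyl^P$ restriction) one has $v(\theta)\le w(\theta)$: take a saturated Bruhat chain $v=w_0\lessdot w_1\lessdot\cdots\lessdot w_n=w$, write each cover as $w_{i+1}=\reflection{\gamma_i}w_i$ with $\gamma_i\in\Root^+$ and $w_i\inv(\gamma_i)\in\Root^+$ (Lemma~\ref{prop:coxeter}(a)), observe $\blform{w_i(\theta)}{\gamma_i}=\blform{\theta}{w_i\inv(\gamma_i)}\ge 0$ by dominance, hence $w_i(\theta)\le w_{i+1}(\theta)$ in $\Weyl\theta$ (with equality permitted when the pairing is zero), and chain. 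This sidesteps all bookkeeping about coset representatives. With these two repairs the proof is complete; as drafted, it identifies the right skeleton but quietly assumes the two facts that constitute the actual content.
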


\section{Moment graphs and structure algebras} \label{sec:moment}

In this paper, we work with a class of moment graphs called \emph{parabolic moment graphs} (cf. \cite[Example~4.3]{DLZ19}). 
Let $\Simple_P \subset \Simple$ be a (possibly empty) subset.
The Bruhat poset $(\Weyl, \leq)$ restricts to the subposet $(\Weyl^P, \leq).$ The following data defines a \emph{parabolic moment graph} $\pmoment$ associated with the pair $(\Root, \Root_P).$
\begin{enumerate}[(PM1)]
\item The vertex set is the poset $(\Weyl^P, \leq),$
\item The edge set is $\ke := \{w \ra \lbar{\reflection{\alpha}w} \mid w \in \Weyl^P, \alpha \in \Root^+, w < \lbar{\reflection{\alpha}w}\},$ 
\item The labelling function is $\kl(w \ra \lbar{\reflection{\alpha}w}) = \alpha$ for each $\alpha \in \Root^+.$
\end{enumerate}
Note that $w < \lbar{\reflection{\alpha}w}$ if and only if $w < \reflection{\alpha}w$ and $w \neq \lbar{\reflection{\alpha}w}$ (use Lemma~\ref{prop:orbit}).
A proof for the well-definedness of the labelling function $\kl$ can be found in \cite[Example~4.3]{DLZ19} for example.
We will denote the parabolic moment graph associated with $(\Root, \emptyset)$ by $\moment$ instead of $\moment^{\emptyset}$ to simplify notation. It coincides with the \emph{Bruhat graph} as defined in \cite{Brenti}.

\begin{example}[Type $A_2$] \label{egg:momentA2}
Let $\Root$ be the root system of type $A_2$ with simple roots denoted by $\{\alpha_1, \alpha_2\}.$ Let $\reflection{i}$ denote the simple reflection associated with $\alpha_i.$ Let $\Simple_P=\{\alpha_1\}.$ Below are the moment graphs associate with $(\Root, \emptyset)$ (left) and $(\Root, \Root_P)$ (right).

\begin{center}
\begin{tikzpicture}[scale=0.7]
\draw  (1.73205, -1) -- (0, 0) node[anchor=south] {$s_1s_2s_1$};
\draw (-1.73205, -1) -- (0, 0) ;
\draw (0, -4) -- (0, 0) ;
\draw (1.73205, -3) -- (1.73205, -1) node[anchor=west] {$s_1s_2$};
\draw (-1.73205, -3) -- (1.73205, -1);
\draw (-1.73205, -3) --  (-1.73205, -1) node[anchor=east] {$s_2s_1$};
\draw (1.73205, -3) --  (-1.73205, -1) ;
\draw (0, -4) -- (-1.73205, -3) node[anchor=east] {$s_1$};
\draw (0, -4) --  (1.73205, -3) node[anchor=west] {$s_2$};
\draw (0, -4) node[anchor=north] {$e$};
\draw (-0.1,-3.5) node[anchor=south] {$\mathbf{\alpha_1+\alpha_2}$};
\draw  (-0.666025, -3.7) node[anchor=east] {$\mathbf{\alpha_1}$};
\draw (0.666025, -3.7) node[anchor=west] {$\mathbf{\alpha_2}$};
\draw (-1.73205, -2) node[anchor=east] {$\mathbf{\alpha_2}$};
\draw (1.73205, -2) node[anchor=west] {$\mathbf{\alpha_1}$};
\draw  (-0.666025, -0.3) node[anchor=east] {$\mathbf{\alpha_1}$};
\draw (0.666025, -0.3) node[anchor=west] {$\mathbf{\alpha_2}$};
\draw (-0.88,-2.8) node[anchor=south] {$\mathbf{\alpha_1+\alpha_2}$};
\draw (0.88,-2.6) node[anchor=south] {$\mathbf{\alpha_1+\alpha_2}$};
\end{tikzpicture}
\quad
\begin{tikzpicture}[scale=0.7]
\filldraw[black] (0,0) circle (1pt) node[anchor=south] {$\reflection{1}\reflection{2}$};
\filldraw[black] (0,-2) circle (1pt) node[anchor=north] {$e$};
\filldraw[black] (1,-1) circle (1pt) node[anchor=west] {$\reflection{2}$};

\draw (0,0) -- (0,-2);
\draw (0,0) -- (1,-1);
\draw (1,-1) -- (0,-2);

\draw (0,-1) node[anchor=east] {$\mathbf{\alpha_1+\alpha_2}$};
\draw (0.4,-1.6) node[anchor=west] {$\mathbf{\alpha_2}$};
\draw (0.4,-0.4) node[anchor=west] {$\mathbf{\alpha_1}$};
\end{tikzpicture}
\end{center}
\end{example}

Let $\pmoment = ((\Weyl^P, \leq), \ke, \labelling)$ be the moment graph associated with $(\Root, \Root_P)$ defined above.
Let $\Root \subset U$ be a $\basek$-realization of the root system for some subring $\basek \subset \R$ containing $\basek_{min}$ (Definition \ref{def:rootsystemK}). 
Set $\sym := \Sym_{\basek}U,$ the symmetric algebra of $U$ over $\basek.$ 

\begin{definition}[{\cite[Section~3.2]{F08coxeter}}] \label{def:sa}
The \emph{structure algebra} of $\pmoment$ is given by
\begin{equation*}
\sa{\pmoment} := \{(z_w)_{w \in \Weyl^P} \in \freemodule{w \in \Weyl^P}  \mid z_w - z_{w'} \in \kl(w \ra w')\sym \, \, \mbox{for all } \, w \ra w' \in \ke \}.
\end{equation*}
\end{definition}

\noindent
By direct computation, one can verify that $\sa{\pmoment}$ is an $\sym$-algebra under the coordinate-wise addition, multiplication, and scalar multiplication.

\begin{remark}  \label{rmk:labeldegree}
\begin{enumerate}[\normalfont (a)]
\item An element $\xi \in \sa{\pmoment}$ can be viewed either as an $|\Weyl^P|$-tuple of $\sym$-elements or as a function $\Weyl^P \ra \sym.$ Using this second perspective, we will often denote the $w$-coordinate of $\xi$ by $\xi(w).$
\item Notice that the polynomial algebra $\sym$ depends on the $\basek$-realization of the root system $\Root \subset U.$ When the $\basek$-realization $\Root \subset U$ is coming from a geometric $\basek$-representation $(\Root, U)_{\basek}$ of a root datum, there is a canonical choice for $\sym$ as follows. 
Let $\Root \hra \dual{\Lattice}$ be a root datum with a simple system $\Simple$ and a (possibly empty) subset $\Simple_P \subset \Simple.$ Let $\basek \subset \R$ be a (unital) subring. Let $(\Root, U)_{\basek}$ be a geometric $\basek$-representation of the root datum. By definition of $(\Root, U)_{\basek},$ $U$ contains $U_{\Lattice}:=\Lattice \otimes_{\Z} \basek$ as a free $\basek$-submodule. We take $\sym=\Sym_{\basek} U_{\Lattice}$ in Definition \ref{def:sa} to obtain the structure algebra $\sa{\pmoment}.$
\item Each label $\alpha \in \Root^+$ of the moment graph $\pmoment$ has degree one as an element of $\sym.$
\end{enumerate}
\end{remark}

\begin{example}[Type~$A_2$] \label{egg:saA2modP2}
Let $\basek=\Z.$ Let $\Root \subset U$ be a $\basek$-realization of the root system of type $A_2$ such that $U=\spn_{\basek}\Root$ with a simple system $\Simple =\{\alpha_1, \alpha_2\}$ and a subset $\Simple_P=\{\alpha_1\}.$  Then $\sym=\Z[\alpha_1, \alpha_2]$ and
\begin{equation*}
\begin{split}
\sa{\pmoment} = \Big\{(z_e, z_{s_2}, z_{s_1s_2}) \in \sym \oplus \sym \oplus \sym \mid z_e - z_{s_2} \in \alpha_2 \sym, \\ z_{s_2}-z_{s_1s_2} \in \alpha_1 \sym, z_e-z_{s_1s_2} \in (\alpha_1 + \alpha_2) \sym \Big\}.
\end{split}
\end{equation*}
\end{example}

\section{Combinatorial Schubert classes} \label{sec:schubert}
In this section, we define the notion of \emph{combinatorial Schubert classes} of a structure algebra $\sa{\pmoment}$ and state their key properties mostly without proofs. The reader is referred to \cite[Appendix~D]{AppD} and \cite{Ka11} for the details.

\begin{definition}[cf. {\cite[Appendix~D.3]{AppD}, \cite[Corollary~4.11]{Ka11}}] \label{def:schubert}
The \emph{combinatorial Schubert class corresponding to} $w \in \Weyl$ is a function $\schubert{w}: \Weyl \ra \sym$ given by the following formula: let $x \in \Weyl$ and let $x=\reflection{1}\reflection{2}\cdots\reflection{r}$ be a (not necessarily reduced) expression and we denote the simple root corresponding to $\reflection{i}$ by $\alpha_i$ for each $1 \leq i \leq r.$ Then we define if $w \nleq x,$ $\schubert{w}(x)=0$; and if $w \leq x,$
\begin{equation} \label{eq:schubertcoordinate}
\schubert{w}(x) =  \sum_{\gi} \prod_{p =1}^{\ell} \reflection{1} \reflection{2} \cdots \reflection{i_{p}-1}(\alpha_{i_{p}})
\end{equation}
where $\ell = \length{w},$ $\gi = (i_1, \cdots, i_{\ell}) \in \Z^{\ell}$ such that $1 \leq i_1 < i_2 < \cdots < i_{\ell} \leq r$ and $w=\reflection{i_1} \reflection{i_2} \cdots \reflection{i_{\ell}}.$ By convention, we set $\schubert{e}(x)=1$ for all $x \in \Weyl.$
\end{definition}

\noindent
It is well-known that the formula \eqref{eq:schubertcoordinate} does not depend on the choice of the expression $x = \reflection{1}\reflection{2}\cdots\reflection{r}$ (cf. \cite[Section~4]{Billey97} for when $\Weyl$ is crystallographic).

From Definition \ref{def:schubert}, it is not clear if the Schubert class $\schubert{w}$ belongs to $\sa{\moment}$ for each $w \in \Weyl.$ However, this is indeed the case (\cite[Lemma~D.4]{AppD}), and for $w \in \Weyl^P,$ the restriction $\schubert{w}|_{\Weyl^P}$ belongs to $\sa{\pmoment}$ (Lemma~\ref{prop:schubertcoset} (b)). Moreover, the set of all Schubert classes $\{\schubert{w} \mid w \in \Weyl^P \}$ forms an $\sym$-module basis for $\sa{\pmoment}$ (Lemma~\ref{prop:basis}).

\begin{example}[Type $A_2$] \label{egg:schubertA2}
Below are the Schubert classes of the structure algebra $\sa{\moment}$ where $\moment$ is of type $A_2.$ 
From top to bottom, left to right, the diagrams depict the Schubert classes $\schubert{e};$ $\schubert{\reflection{1}}$ and $\schubert{\reflection{2}};$ $\schubert{\reflection{2}\reflection{1}}$ and $\schubert{\reflection{1}\reflection{2}};$ and finally $\schubert{\reflection{1}\reflection{2}\reflection{1}}.$

\centering
\begin{tikzpicture}[scale=0.5]
\draw  (1.73205, -1) -- (0, 0) node[anchor=south] {$1$};
\draw (-1.73205, -1) -- (0, 0) ;
\draw (0, -4) -- (0, 0) ;
\draw (1.73205, -3) -- (1.73205, -1) node[anchor=west] {$1$};
\draw (-1.73205, -3) -- (1.73205, -1);
\draw (-1.73205, -3) --  (-1.73205, -1) node[anchor=east] {$1$};
\draw (1.73205, -3) --  (-1.73205, -1) ;
\draw (0, -4) -- (-1.73205, -3) node[anchor=east] {$1$};
\draw (0, -4) --  (1.73205, -3) node[anchor=west] {$1$};
\draw (0, -4) circle (2pt) node[anchor=north] {$1$};
\filldraw[black] (0, -4) circle (3pt);
\end{tikzpicture}

\begin{tikzpicture}[scale=0.5]
\draw  (1.73205, -1) -- (0, 0) node[anchor=south] {$\alpha_1+\alpha_2$};
\draw (-1.73205, -1) -- (0, 0) ;
\draw (0, -4) -- (0, 0) ;
\draw (1.73205, -3) -- (1.73205, -1) node[anchor=west] {$\alpha_1$};
\draw (-1.73205, -3) -- (1.73205, -1);
\draw (-1.73205, -3) --  (-1.73205, -1) node[anchor=east] {$\alpha_1+\alpha_2$};
\draw (1.73205, -3) --  (-1.73205, -1) ;
\draw (0, -4) -- (-1.73205, -3) node[anchor=east] {$\alpha_1$};
\draw (0, -4) --  (1.73205, -3) node[anchor=west] {$0$};
\draw (0, -4) node[anchor=north] {$0$};
\filldraw[black] (-1.73205, -3) circle (3pt);

\begin{scope}[xshift=6cm, on grid]
\draw  (1.73205, -1) -- (0, 0) node[anchor=south] {$\alpha_1+\alpha_2$};
\draw (-1.73205, -1) -- (0, 0) ;
\draw (0, -4) -- (0, 0) ;
\draw (1.73205, -3) -- (1.73205, -1) node[anchor=west] {$\alpha_1+\alpha_2$};
\draw (-1.73205, -3) -- (1.73205, -1);
\draw (-1.73205, -3) --  (-1.73205, -1) node[anchor=east] {$\alpha_2$};
\draw (1.73205, -3) --  (-1.73205, -1) ;
\draw (0, -4) -- (-1.73205, -3) node[anchor=east] {$0$};
\draw (0, -4) --  (1.73205, -3) node[anchor=west] {$\alpha_2$};
\draw (0, -4) node[anchor=north] {$0$};
\filldraw[black]  (1.73205, -3) circle (3pt); 
\end{scope}
\end{tikzpicture}

\begin{tikzpicture}[scale=0.5]
\draw  (1.73205, -1) -- (0, 0) node[anchor=south] {$\alpha_2 (\alpha_1+\alpha_2)$};
\draw (-1.73205, -1) -- (0, 0) ;
\draw (0, -4) -- (0, 0) ;
\draw (1.73205, -3) -- (1.73205, -1) node[anchor=west] {$0$};
\draw (-1.73205, -3) -- (1.73205, -1);
\draw (-1.73205, -3) --  (-1.73205, -1) node[anchor=east] {$\alpha_2 (\alpha_1+\alpha_2)$};
\draw (1.73205, -3) --  (-1.73205, -1) ;
\draw (0, -4) -- (-1.73205, -3) node[anchor=east] {$0$};
\draw (0, -4) --  (1.73205, -3) node[anchor=west] {$0$};
\draw (0, -4) node[anchor=north] {$0$};
\filldraw[black]  (-1.73205, -1) circle (3pt);

\begin{scope}[xshift=6cm, on grid]
\draw  (1.73205, -1) -- (0, 0) node[anchor=south] {$\alpha_1 (\alpha_1+\alpha_2)$};
\draw (-1.73205, -1) -- (0, 0) ;
\draw (0, -4) -- (0, 0) ;
\draw (1.73205, -3) -- (1.73205, -1) node[anchor=west] {$\alpha_1 (\alpha_1+\alpha_2)$};
\draw (-1.73205, -3) -- (1.73205, -1);
\draw (-1.73205, -3) --  (-1.73205, -1) node[anchor=east] {$0$};
\draw (1.73205, -3) --  (-1.73205, -1) ;
\draw (0, -4) -- (-1.73205, -3) node[anchor=east] {$0$};
\draw (0, -4) --  (1.73205, -3) node[anchor=west] {$0$};
\draw (0, -4) node[anchor=north] {$0$};
\filldraw[black]  (1.73205, -1) circle (3pt);
\end{scope}
\end{tikzpicture}

\begin{tikzpicture}[scale=0.5]
\draw  (1.73205, -1) -- (0, 0) node[anchor=south] {$\alpha_1 \alpha_2 (\alpha_1+\alpha_2)$};
\draw (-1.73205, -1) -- (0, 0) ;
\draw (0, -4) -- (0, 0) ;
\draw (1.73205, -3) -- (1.73205, -1) node[anchor=west] {$0$};
\draw (-1.73205, -3) -- (1.73205, -1);
\draw (-1.73205, -3) --  (-1.73205, -1) node[anchor=east] {$0$};
\draw (1.73205, -3) --  (-1.73205, -1) ;
\draw (0, -4) -- (-1.73205, -3) node[anchor=east] {$0$};
\draw (0, -4) --  (1.73205, -3) node[anchor=west] {$0$};
\draw (0, -4) node[anchor=north] {$0$};
\filldraw[black] (0, 0) circle (3pt);
\end{tikzpicture}
\end{example}

\noindent
Here are the basic properties of a Schubert class $\schubert{w}$ stated without a proof.

\begin{lemma}[{\cite[Appendix~D.1]{AppD}}] \label{prop:schubert1} 
Let $w \in \Weyl.$
\begin{enumerate}[\normalfont (a)]
\item For each $x \in \Weyl,$ $\schubert{w}(x)$ is homogeneous of degree $\length{w}.$
\item $\schubert{w}(w) = \prod_{\alpha \in \Root^+, w\inv(\alpha) \in \Root^-} \alpha.$
\item If $w \leq x,$ then $\schubert{w}(x) \neq 0.$
\end{enumerate}
\end{lemma}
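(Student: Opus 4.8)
The plan is to prove the three claims in order, reducing each to a careful analysis of the defining formula \eqref{eq:schubertcoordinate}, choosing a \emph{reduced} expression $x = \reflection{1}\cdots\reflection{r}$ for the given element $x\in\Weyl$ (which we may do, since the formula is independent of the chosen expression). For part (a), observe that in \eqref{eq:schubertcoordinate} each summand is a product of exactly $\ell=\length{w}$ factors, and each factor $\reflection{1}\cdots\reflection{i_p-1}(\alpha_{i_p})$ is the image of a simple root under an element of $\Weyl$, hence a (signed) root, which is a homogeneous degree-one element of $\sym$ by Remark~\ref{rmk:labeldegree}(c). Therefore every summand is homogeneous of degree $\ell$, and so is $\schubert{w}(x)$; when $w\nleq x$ the value is $0$, which we regard as homogeneous of the required degree. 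The only subtlety is that distinct summands must indeed live in the same graded piece, which is immediate once each factor is seen to be degree one.

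For part (b), take a reduced expression $x = w = \reflection{1}\cdots\reflection{\ell}$ of $w$ itself (so $r = \ell = \length w$). In \eqref{eq:schubertcoordinate} the only index tuple $\gi = (i_1,\dots,i_\ell)$ with $1\le i_1<\cdots<i_\ell\le \ell$ and $\reflection{i_1}\cdots\reflection{i_\ell}=w$ is $\gi = (1,2,\dots,\ell)$ — any other subword has length $<\ell$ and cannot equal $w$ since the expression is reduced. Hence $\schubert{w}(w) = \prod_{p=1}^{\ell}\reflection{1}\cdots\reflection{p-1}(\alpha_p)$. It remains to identify this product with $\prod_{\alpha\in\Root^+,\,w\inv(\alpha)\in\Root^-}\alpha$. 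This is the standard fact that, for a reduced word $w=\reflection{1}\cdots\reflection{\ell}$, the roots $\beta_p := \reflection{1}\cdots\reflection{p-1}(\alpha_p)$ ($p=1,\dots,\ell$) are pairwise distinct positive roots and constitute exactly the inversion set $\{\alpha\in\Root^+ : w\inv(\alpha)\in\Root^-\}$; I would invoke Lemma~\ref{prop:coxeter}(a) iteratively (each $\beta_p$ is positive because $\reflection{1}\cdots\reflection{p}$ has length one more than $\reflection{1}\cdots\reflection{p-1}$) to justify it, or simply cite \cite{Humphreys}. This yields (b).

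For part (c), suppose $w\le x$ and choose a reduced expression $x = \reflection{1}\cdots\reflection{r}$. By Lemma~\ref{prop:coxeter}(b) there is at least one tuple $\gi$ with $\reflection{i_1}\cdots\reflection{i_\ell}=w$ a subexpression, so the sum in \eqref{eq:schubertcoordinate} is nonempty; the point is to show it does not cancel to zero. The clean argument is positivity: if $x = \reflection{1}\cdots\reflection{r}$ is reduced, then for \emph{every} index $j$ the root $\reflection{1}\cdots\reflection{j-1}(\alpha_j)$ is positive (again Lemma~\ref{prop:coxeter}(a), since lengths strictly increase along a reduced word), so each factor appearing in \eqref{eq:schubertcoordinate} is a positive root, every summand is a product of positive roots and hence a nonzero polynomial with nonnegative integer coefficients, and a nonempty sum of such polynomials is nonzero. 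I expect this positivity bookkeeping — making precise that a reduced expression forces all the intermediate roots $\reflection{1}\cdots\reflection{j-1}(\alpha_j)$ to be positive, uniformly in $j$ and not merely for the indices in one tuple — to be the main (though routine) obstacle; everything else is direct substitution into the definition. Note that (c) also requires the realization $\Root\subset U$ to be faithful enough that a nonempty $\basek$-linear combination of monomials in positive roots is nonzero in $\sym=\Sym_\basek U$, which holds since $\basek\subset\R$ and the positive roots span a cone.
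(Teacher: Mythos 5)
Your proof is correct. The paper itself does not prove this lemma but defers to \cite{AppD}; your argument — choose a reduced expression, observe via Lemma~\ref{prop:coxeter}(a) that each factor $\reflection{1}\cdots\reflection{i_p-1}(\alpha_{i_p})$ is then a positive root, deduce (a) from degree count, (b) from the fact that a reduced length-$\ell$ word has a unique length-$\ell$ subexpression together with the standard identification of $\{\reflection{1}\cdots\reflection{p-1}(\alpha_p)\}$ with the inversion set, and (c) from positivity of the nonempty sum of products of positive roots over $\basek\subset\R$ — is the standard one (cf.\ the positivity argument in Billey's formula, which the paper references as \cite{Billey97}), and there is no gap.
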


\begin{remark} \label{rmk:schubertdegree}
By (a), we may call the common homogeneous degree of $\schubert{w}(x) \,(x \in \Weyl)$ the \emph{degree of} $\schubert{w},$ denoted by $\deg \schubert{w}.$
By Lemmas~\ref{prop:schubert1} (b) and \ref{prop:coxeter} (a), we have
\begin{equation*}
\deg{\schubert{w}} = \length{w} = |\{\alpha \in \Root ^+ \mid \length{\reflection{\alpha}w} < \length{w}\}| = |\{w' \ra w \in \ke \mid w' \in \Weyl^P\}|.
\end{equation*}
\end{remark}
\noindent
Lemmas~\ref{prop:schubertind}--\ref{prop:schubertcoset} establish that for $w \in \Weyl^P,$ the restriction $\schubert{w}|_{\Weyl^P}$ belongs to $\sa{\pmoment}.$

\begin{lemma}[{\cite[Appendix~D.2]{AppD}}] \label{prop:schubertind}
Let $x \in \Weyl$ and let $\alpha$ be a simple root. 
\begin{align} \label{eq:schubertind}
\schubert{w}(x\reflection{\alpha}) =
\begin{cases}
\schubert{w}(x), & \text{if} \,\,\, \length{w\reflection{\alpha}} > \length{w} \\
\schubert{w}(x) + x(\alpha)\schubert{w\reflection{\alpha}}(x), & \text{if} \,\,\, \length{w\reflection{\alpha}} < \length{w}
\end{cases}
\end{align}
\end{lemma}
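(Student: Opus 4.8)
The plan is to prove the recursion \eqref{eq:schubertind} directly from the defining formula \eqref{eq:schubertcoordinate}, by choosing a convenient expression for $x\reflection{\alpha}$ and comparing term by term. First I would fix a reduced expression $x = \reflection{1}\cdots\reflection{r}$, and write $x\reflection{\alpha} = \reflection{1}\cdots\reflection{r}\reflection{\alpha}$ as an expression of length $r+1$ (not necessarily reduced, but \eqref{eq:schubertcoordinate} is independent of that choice). Setting $\ell = \length{w}$, the quantity $\schubert{w}(x\reflection{\alpha})$ is then a sum over strictly increasing sequences $1 \le i_1 < \cdots < i_\ell \le r+1$ with $\reflection{i_1}\cdots\reflection{i_\ell} = w$, of the products $\prod_{p=1}^\ell \reflection{1}\cdots\reflection{i_p - 1}(\alpha_{i_p})$, where now $\alpha_{r+1} = \alpha$. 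I would split this sum into two pieces: sequences with $i_\ell \le r$ and sequences with $i_\ell = r+1$.

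The first piece is exactly $\schubert{w}(x)$, since those sequences are precisely the ones contributing to $\schubert{w}(x)$ computed via the expression $\reflection{1}\cdots\reflection{r}$, and the factors $\reflection{1}\cdots\reflection{i_p-1}(\alpha_{i_p})$ coincide. For the second piece, $i_\ell = r+1$ forces the last factor to be $\reflection{1}\cdots\reflection{r}(\alpha) = x(\alpha)$, and the remaining indices $i_1 < \cdots < i_{\ell-1} \le r$ must satisfy $\reflection{i_1}\cdots\reflection{i_{\ell-1}} = w\reflection{\alpha}$. So the second piece equals $x(\alpha)$ times the sum over those sequences of $\prod_{p=1}^{\ell-1}\reflection{1}\cdots\reflection{i_p-1}(\alpha_{i_p})$. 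Now there is a case split: if $\length{w\reflection{\alpha}} > \length{w}$, i.e. $\length{w\reflection{\alpha}} = \ell+1$, then no such sequence of length $\ell-1$ can multiply to $w\reflection{\alpha}$ (an element of length $\ell+1$ cannot be written as a product of $\ell - 1$ simple reflections), so the second piece vanishes and we get $\schubert{w}(x\reflection{\alpha}) = \schubert{w}(x)$. If $\length{w\reflection{\alpha}} < \length{w}$, i.e. $\length{w\reflection{\alpha}} = \ell - 1$, then the second piece is exactly $x(\alpha)\schubert{w\reflection{\alpha}}(x)$ by definition of $\schubert{w\reflection{\alpha}}$ evaluated at $x$ via the expression $\reflection{1}\cdots\reflection{r}$ — note that $\length{w\reflection\alpha} = \ell - 1$ matches the number of factors, so the formula applies verbatim.

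The main obstacle is the bookkeeping around whether the chosen expression for $x\reflection{\alpha}$ is reduced, and making sure the independence-of-expression fact (stated after Definition~\ref{def:schubert}) is being invoked legitimately: in the case $\length{w\reflection\alpha} < \length{w}$ the subword decomposition of $x\reflection\alpha$ we use has length $r+1 > \length{x\reflection\alpha}$ possibly, which is fine since \eqref{eq:schubertcoordinate} allows non-reduced expressions, but one must double-check that $\schubert{w\reflection\alpha}(x)$ is being computed with a valid expression for $x$ (namely the reduced one $\reflection1\cdots\reflection r$), which it is. A secondary subtlety is the degenerate subcase $w\reflection\alpha = e$ (when $\length w = 1$): there the sum over empty sequences contributes the empty product $1$, so the second piece is $x(\alpha)\cdot 1 = x(\alpha) = x(\alpha)\schubert{e}(x)$, consistent with the convention $\schubert{e} \equiv 1$; I would mention this explicitly to avoid an edge-case gap. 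The only other thing to verify is that the two cases in \eqref{eq:schubertind} are exhaustive, which holds because $\length{w\reflection\alpha}$ and $\length w$ differ by exactly $\pm 1$ for $\alpha$ simple.
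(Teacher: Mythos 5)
Your proof is correct. The paper cites [AppD] rather than reproducing an argument, but your direct computation---applying the defining sum \eqref{eq:schubertcoordinate} to the (possibly non-reduced) expression $\reflection{1}\cdots\reflection{r}\reflection{\alpha}$ for $x\reflection{\alpha}$ and splitting on whether the last index $r+1$ is used---is the standard derivation, and you correctly flag the relevant subtleties: the expression-independence of the formula (which, in its usual strong form, also guarantees the sum vanishes whenever $w\nleq x\reflection{\alpha}$), the length count that kills the second piece when $\length{w\reflection{\alpha}}>\length{w}$, the fact that the first piece agrees with $\schubert{w}(x)$ even when $w\nleq x$ (both are empty sums by the subword property applied to the reduced word $\reflection{1}\cdots\reflection{r}$), and the $w\reflection{\alpha}=e$ edge case.
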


\begin{lemma} \label{prop:schubertcoset}
\begin{enumerate}[\normalfont (a)]
\item Let $w \in \Weyl^P,$ $x \in \Weyl$ and $y \in \Weyl_P.$ Then $\schubert{w}(xy)=\schubert{w}(x).$
\item Let $w, x \in \Weyl^P$ and $\alpha \in \Root^+.$ Then $\schubert{w}(x) - \schubert{w}(\lbar{\reflection{\alpha}x}) \in \alpha \sym.$
\end{enumerate}
\end{lemma}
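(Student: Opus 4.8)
The plan is to prove part~(a) directly from the right-multiplication recursion of Lemma~\ref{prop:schubertind} and the definition of $\Weyl^P$, and then to deduce part~(b) quickly from~(a) together with the fact, already recalled above, that $\schubert{w} \in \sa{\moment}$ for every $w \in \Weyl$ (\cite[Lemma~D.4]{AppD}).

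For~(a), I would first reduce to the case where $y$ is a single Coxeter generator lying in $P$. Since $(\Weyl_P, P)$ is itself a Coxeter system, write $y = \cgen{1} \cdots \cgen{k}$ with all $\cgen{j} \in P$ and induct on $k$, the case $y = e$ being trivial. For the inductive step put $x' := x\cgen{1}\cdots\cgen{k-1} \in \Weyl$, so that $xy = x'\cgen{k}$. Because $w \in \Weyl^P$ and $\cgen{k} \in P$, the definition of $\Weyl^P$ gives $\length{w\cgen{k}} > \length{w}$, so the first branch of \eqref{eq:schubertind} yields $\schubert{w}(x'\cgen{k}) = \schubert{w}(x')$. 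Since $\cgen{1}\cdots\cgen{k-1} \in \Weyl_P$ has length $k-1$, the induction hypothesis gives $\schubert{w}(x') = \schubert{w}(x)$, finishing the step.

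For~(b), let $w, x \in \Weyl^P$ and $\alpha \in \Root^+$. By definition $\lbar{\reflection{\alpha}x}$ is the unique element of $\reflection{\alpha}x\,\Weyl_P \cap \Weyl^P$, so $\lbar{\reflection{\alpha}x} = \reflection{\alpha}x\,y$ for some $y \in \Weyl_P$; applying~(a) with the ambient group element taken to be $\reflection{\alpha}x$ gives $\schubert{w}(\lbar{\reflection{\alpha}x}) = \schubert{w}(\reflection{\alpha}x)$. Thus it suffices to show $\schubert{w}(x) - \schubert{w}(\reflection{\alpha}x) \in \alpha\sym$. As $\reflection{\alpha}$ is a reflection, $x \neq \reflection{\alpha}x$ and the two elements are comparable in the Bruhat order; interchanging them if necessary I may assume $x < \reflection{\alpha}x$, in which case $x \ra \reflection{\alpha}x$ is an edge of the Bruhat graph $\moment$ labelled $\alpha$. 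Since $\schubert{w} \in \sa{\moment}$, the defining divisibility relations of $\sa{\moment}$ give precisely $\schubert{w}(x) - \schubert{w}(\reflection{\alpha}x) \in \alpha\sym$.

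I do not expect a serious obstacle: (a) is a clean induction off Lemma~\ref{prop:schubertind}, and (b) is essentially formal once the two standard Coxeter-theoretic facts used above are granted — namely that $\lbar{\reflection{\alpha}x}$ differs from $\reflection{\alpha}x$ only by right multiplication by an element of $\Weyl_P$, and that $x$ and $\reflection{\alpha}x$ are always Bruhat-comparable (the length difference being odd). The only input carrying real content is $\schubert{w} \in \sa{\moment}$; if one preferred not to quote \cite[Lemma~D.4]{AppD}, one would instead prove the divisibility $\schubert{w}(x) - \schubert{w}(\reflection{\alpha}x) \in \alpha\sym$ over the full Bruhat graph directly, by induction on $\length{x}$ via Lemma~\ref{prop:schubertind} and a case split on the sign of $w\inv(\alpha)$ (Lemma~\ref{prop:coxeter}(a)), and that is where the bulk of the bookkeeping would lie.
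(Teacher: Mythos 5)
Your proof is correct and follows essentially the same route as the paper: part (a) by iterating the right-multiplication recursion of Lemma~\ref{prop:schubertind} over the $P$-generators of $y$, and part (b) by applying (a) to reduce to a pair joined by $\reflection{\alpha}$ and then invoking the divisibility relations built into $\schubert{w} \in \sa{\moment}$. The only cosmetic difference is that you use (a) to replace $\lbar{\reflection{\alpha}x}$ by $\reflection{\alpha}x$ and then compare $x$ with $\reflection{\alpha}x$, whereas the paper moves $y^{-1}$ onto $x$ and compares $xy^{-1}$ with $\reflection{\alpha}xy^{-1}$; both land on the same edge-divisibility fact.
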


\begin{proof}
(a) \, Let $y=\reflection{1}\reflection{2}\cdots\reflection{\ell}$ be a reduced expression and denote the simple reflection associated with $\reflection{i}$ by $\alpha_i$ for each $1 \leq i \leq \ell.$ Note that each $\alpha_i$ belongs to $\Simple_P.$ Since $w \in \Weyl^P,$ we have $\length{w\reflection{i}}>\length{w}$ for each $1 \leq i \leq \ell.$ By applying \eqref{eq:schubertind} repeatedly, we obtain
\begin{equation*}
\schubert{w}(x)=\schubert{w}(x\reflection{1})=\schubert{w}(x\reflection{1}\reflection{2}) = \cdots \schubert{w}(x\reflection{1}\reflection{2}\cdots\reflection{\ell}) = \schubert{w}(xy).
\end{equation*}
(b) \, We have $\reflection{\alpha}x = \lbar{\reflection{\alpha}x}y$ for some $y \in \Weyl_P.$ Applying (a), we obtain
\begin{equation*}
\schubert{w}(x) - \schubert{w}(\lbar{\reflection{\alpha}x}) = \schubert{w}(xy\inv) - \schubert{w}(\reflection{\alpha}xy\inv) \in \alpha \sym. \qedhere
\end{equation*}
\end{proof}

\noindent
When we speak of a Schubert class of $\sa{\pmoment}$ corresponding to $w \in \Weyl^P,$ we mean the restriction $\schubert{w}|_{\Weyl^P}.$ For the notational convenience, we denote $\schubert{w}|_{\Weyl^P}$ also by $\schubert{w}.$

\begin{lemma} [{\cite[Appendix~D.5]{AppD}}] \label{prop:basis}
The set $\{\schubert{w} \mid w \in \Weyl^P \}$ is an $\sym$-basis of $\sa{\pmoment}.$
\end{lemma}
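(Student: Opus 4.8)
The plan is to establish that $\{\schubert{w} \mid w \in \Weyl^P\}$ is an $\sym$-basis for $\sa{\pmoment}$ by a triangularity argument with respect to the Bruhat order on $\Weyl^P$. The key structural input is Lemma~\ref{prop:schubert1}: for $w \in \Weyl^P$, the Schubert class $\schubert{w}$ (i.e. $\schubert{w}|_{\Weyl^P}$) vanishes at every $x \in \Weyl^P$ with $w \nleq x$, it is nonzero whenever $w \leq x$, and its ``diagonal value'' is the explicit nonzero homogeneous polynomial $\schubert{w}(w) = \prod_{\alpha \in \Root^+,\, w\inv(\alpha)\in\Root^-}\alpha$ of degree $\length{w}$. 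Since $\sym$ is an integral domain, this diagonal value is not a zero divisor, which is what makes the triangular system invertible.

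First I would prove linear independence. Suppose $\sum_{w \in \Weyl^P} a_w \schubert{w} = 0$ with $a_w \in \sym$ not all zero; choose $w$ minimal in Bruhat order among those with $a_w \neq 0$. Evaluating the relation at the vertex $w \in \Weyl^P$ and using that $\schubert{w'}(w) = 0$ for every $w' \nleq w$ (in particular for every $w' \neq w$ with $a_{w'}\neq 0$, by minimality of $w$) leaves $a_w\schubert{w}(w) = 0$; since $\schubert{w}(w)\neq 0$ and $\sym$ is a domain, $a_w = 0$, a contradiction. Next, for spanning, take an arbitrary $\xi \in \sa{\pmoment}$ and build a representation $\xi = \sum_w a_w\schubert{w}$ by induction on the Bruhat poset $\Weyl^P$. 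Process the vertices $w$ in a linear extension of $(\Weyl^P,\leq)$ from the bottom; having subtracted off the contributions of all strictly smaller vertices, one wants to solve $a_w\schubert{w}(w) = \bigl(\xi - \sum_{w' < w} a_{w'}\schubert{w'}\bigr)(w)$ for $a_w \in \sym$. The main point to check here is that the right-hand side is actually divisible by $\schubert{w}(w)$ in $\sym$; this is where one invokes the defining divisibility conditions of $\sa{\pmoment}$ together with Lemma~\ref{prop:schubertcoset}~(b) (which guarantees $\schubert{w'}|_{\Weyl^P} \in \sa{\pmoment}$, so that the partial difference still lies in $\sa{\pmoment}$), and the fact — recorded in Remark~\ref{rmk:schubertdegree} — that $\schubert{w}(w)$ is exactly the product of the labels $\alpha$ over the edges $w' \ra w$ of $\pmoment$ with $w' \in \Weyl^P$. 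One argues that a function in $\sa{\pmoment}$ vanishing on all $w'$ strictly below $w$ (which is the case after the subtraction, by the induction hypothesis and the triangularity of the $\schubert{w'}$) must have its value at $w$ divisible by each such label $\alpha$, and then by their product since distinct positive roots are pairwise non-associate in the domain $\sym$; this yields the required $a_w \in \sym$.

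The main obstacle is precisely this divisibility step: passing from ``divisible by $\alpha$ for each incoming edge label $\alpha$'' to ``divisible by the product $\prod \alpha$'' requires knowing that the incoming edge labels at $w$ are pairwise coprime (equivalently, that no positive root is a scalar multiple of another, which holds since $\Root$ is reduced and $\sym$ is a unique factorization domain over $\basek$), and it requires the bookkeeping that the partial difference $\xi - \sum_{w'<w}a_{w'}\schubert{w'}$ genuinely vanishes at every vertex strictly below $w$ — this is where the already-established linear independence and the bottom-up order of the induction are used. Once this is handled the degree bookkeeping is automatic, since $\deg\schubert{w} = \length{w}$ matches the number of incoming edges. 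A complete argument is given in \cite[Appendix~D.5]{AppD}; I would reproduce the triangularity skeleton above and cite that reference for the routine verifications.
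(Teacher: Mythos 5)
The paper itself does not prove Lemma~\ref{prop:basis}; it simply cites \cite[Appendix~D.5]{AppD}. Your triangularity argument (linear independence by evaluating at a Bruhat-minimal vertex with nonzero coefficient, spanning by a bottom-up induction that peels off $a_w\schubert{w}$, with $a_w$ obtained by dividing $\eta(w)$ by the product of incoming edge labels) is the standard proof and it is sound: the facts you invoke — primality of each label in $\sym$, pairwise non-associateness of distinct positive roots since $\Root$ is reduced, membership $\schubert{w}|_{\Weyl^P}\in\sa{\pmoment}$ from Lemma~\ref{prop:schubertcoset}, and the identification of $\schubert{w}(w)$ with the product of the labels of edges entering $w$ — are exactly the ingredients the paper itself uses in its proof of Lemma~\ref{ob}, so your proposal is consistent with the paper's framework. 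One point worth making explicit if you write it out in full: Remark~\ref{rmk:schubertdegree} records only the equality of \emph{degrees}, not the equality of the two degree-$\length{w}$ products $\schubert{w}(w)=\prod_{\alpha>0,\,w^{-1}\alpha<0}\alpha$ and $\prod_{w'\ra w}\kl(w'\ra w)$ as elements of $\sym$; that set-level identification (the map $\alpha\mapsto\lbar{\reflection{\alpha}w}$ is a bijection from $\{\alpha\in\Root^+:w^{-1}\alpha\in\Root^-\}$ onto the down-edges at $w$ in $\pmoment$) is true but deserves a line of justification, especially in the parabolic case.
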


The following characterization of a Schubert class will be the key in Section~\ref{sec:lifting}.

\begin{lemma}[cf. {\cite[Proposition~4.9]{Ka11}}] \label{ob}
Let $w \in \Weyl^P$ and $\xi := (x_{w'})_{w' \in \Weyl^P} \in \sa{\pmoment}.$ The following are equivalent:
\begin{enumerate}[\normalfont (a)]
\item{
$\xi$ is homogeneous of degree $\length{w}$ such that $x_w \neq 0$ and $x_{w'}=0$ for all $w' \ngeq w,$
}
\item{
 $\xi = c \cdot \schubert{w}$ for some nonzero constant $c \in \basek.$ 
}
\end{enumerate}
\end{lemma}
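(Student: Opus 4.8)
The implication (b) $\Rightarrow$ (a) is immediate: by Lemma~\ref{prop:schubert1}(a) the Schubert class $\schubert{w}$ is homogeneous of degree $\length{w}$, hence so is $c\cdot\schubert{w}$; by Lemma~\ref{prop:schubert1}(b) and Lemma~\ref{prop:coxeter}(a) the $w$-coordinate $\schubert{w}(w)$ is a nonzero product of positive roots, hence $c\cdot\schubert{w}(w)\neq 0$; and by Definition~\ref{def:schubert}, $\schubert{w}(w')=0$ whenever $w\nleq w'$, which covers all $w'\ngeq w$. The real content is (a) $\Rightarrow$ (b), and the plan is to run an induction on the Bruhat order exploiting that $\{\schubert{v}\mid v\in\Weyl^P\}$ is an $\sym$-basis of $\sa{\pmoment}$ (Lemma~\ref{prop:basis}).

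First I would expand $\xi$ in the Schubert basis: $\xi=\sum_{v\in\Weyl^P} f_v\,\schubert{v}$ with $f_v\in\sym$. Comparing degrees: since $\xi$ is homogeneous of degree $\length{w}$ and each $\schubert{v}$ is homogeneous of degree $\length{v}$, we may assume each $f_v$ is homogeneous of degree $\length{w}-\length{v}$ (in particular $f_v=0$ unless $\length{v}\le\length{w}$), and $f_v$ is a nonzero scalar only when $\length{v}=\length{w}$. Next I would show $f_v=0$ for all $v\ngeq w$, arguing by induction on $\length{v}$ (from below). Evaluate both sides at a minimal such $v$ (minimal in Bruhat order among elements with $f_v\neq 0$ and $v\ngeq w$): on the left $\xi(v)=0$ by hypothesis (a); on the right, $\schubert{v'}(v)=0$ unless $v'\le v$ by Definition~\ref{def:schubert}, and for $v'<v$ one has $v'\ngeq w$ as well... here I need to be a little careful, so instead I would order the sum by Bruhat order and peel off coordinates: evaluating at $v$ minimal with $f_v\neq 0$ gives $0=\xi(v)=f_v\,\schubert{v}(v)$ (all other surviving terms $\schubert{v'}(v)$ with $v'\le v$ force $v'=v$ by minimality, or have $f_{v'}=0$), and $\schubert{v}(v)\neq 0$ by Lemma~\ref{prop:schubert1}(b), so $f_v=0$, a contradiction. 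Iterating, $f_v=0$ for every $v$ with $v\ngeq w$; combined with the degree constraint $\length{v}\le\length{w}$, the only surviving index is $v=w$ itself (any $v\neq w$ with $v\geq w$ has $\length{v}>\length{w}$). Hence $\xi=f_w\schubert{w}$ with $f_w$ homogeneous of degree $0$, i.e.\ $f_w=c\in\basek$, and $c\neq 0$ since $x_w=\xi(w)=c\,\schubert{w}(w)\neq 0$.

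The main obstacle is the bookkeeping in the peeling argument: one must evaluate $\xi$ at the Bruhat-minimal elements of the support of $(f_v)$ in the right order and invoke $\schubert{v'}(v)=0$ for $v'\nleq v$ (Definition~\ref{def:schubert}) together with $\schubert{v}(v)\neq 0$ (Lemma~\ref{prop:schubert1}(b)) to conclude each coefficient vanishes; the subtlety is only to make sure that the relevant evaluations really do lie in $\Weyl^P$ so that the restricted Schubert classes behave as stated (Lemma~\ref{prop:schubertcoset}(b) guarantees the restrictions lie in $\sa{\pmoment}$, and Lemma~\ref{prop:basis} that they form a basis, so this is safe). Everything else is a routine degree count.
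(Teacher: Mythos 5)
Your proof is correct, but it takes a genuinely different route from the paper's. The paper never expands $\xi$ in the Schubert basis: instead it works directly with the defining edge relations of the structure algebra. Since $x_{w'}=0$ for every $w'<w$, all relations $x_{w'}-x_w\in\kl(w'\to w)\sym$ force $x_w$ to be divisible by $\prod_{w'<w}\kl(w'\to w)$; this product has degree $\length{w}$ (Remark~\ref{rmk:schubertdegree}), so $x_w=c\prod_{w'<w}\kl(w'\to w)=c\,\schubert{w}(w)$ for some nonzero $c\in\basek$. The paper then forms $\xi-c\cdot\schubert{w}$ and peels off coordinates from the bottom up, using at each step that the labels into a vertex $v>w$ force a degree-$\length{v}$ divisibility that is incompatible with the degree $\length{w}$, hence the coordinate vanishes. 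Your argument instead invokes Lemma~\ref{prop:basis} to write $\xi=\sum_v f_v\schubert{v}$ with each $f_v$ homogeneous of degree $\length{w}-\length{v}$, and then runs a Bruhat-triangularity argument on the support: a Bruhat-minimal $v$ in the support gives $\xi(v)=f_v\,\schubert{v}(v)$, which is nonzero by Lemma~\ref{prop:schubert1}(b), so $v\ge w$; combined with $\length{v}\le\length{w}$ the support collapses to $\{w\}$. The two approaches buy different things: yours is shorter and conceptually cleaner once one has the basis theorem (Lemma~\ref{prop:basis}) in hand, and immediately reduces to a finite degree/support count; the paper's is more self-contained, as it uses only the moment-graph relations and the identity $\schubert{w}(w)=\prod_{w'<w}\kl(w'\to w)$ without appealing to the basis theorem at all (which is worth noting, since the cited proof of Lemma~\ref{prop:basis} and the present lemma are closely related in spirit). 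One small presentational point in your write-up: when you "take $v$ minimal with $f_v\neq 0$," the vanishing $\xi(v)=0$ is not automatic; you should say explicitly that you assume for contradiction that $v\ngeq w$, which then gives $\xi(v)=0$ by hypothesis~(a), yielding $f_v\schubert{v}(v)=0$ and hence $f_v=0$. (Also your earlier worry — that $v'<v$ might fail $v'\ngeq w$ — is in fact unfounded: if $v'\geq w$ and $v'\leq v$ then $v\geq w$, so the original version of the argument would have worked too.)
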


\begin{proof}
We give a proof that (a) implies (b) (the other direction is clear). 
Let $w=e.$ If $\xi$ is homogeneous of degree $\length{e}=0$ with $c:=x_e \neq 0,$ then $x_{w'}=c$ for all $w' \in \Weyl^P$ by the edge relations of $\sa{\pmoment}.$ Thus $\xi=c \cdot \schubert{e}.$
Let $w>e.$ Since $x_{w'}=0$ for all $w'<w,$ 
for $x_w$ ($\neq 0$) to satisfy all the relations $x_{w'}-x_w \in \kl(w' \ra w)\sym,$ it must be divisible by the product $\prod_{w'<w}{\kl(w' \ra w)}$ (since each label is a prime element of $\sym$). Notice that $\deg(\prod_{w'<w}{\kl(w' \ra w)}) = \sum_{w' < w} {\deg(\kl(w' \ra w))} = \length{w}$ (Remarks~\ref{rmk:labeldegree} (c) and \ref{rmk:schubertdegree}).
Since $x_w$ also has degree $\length{w},$ we obtain
$x_w = c \prod_{w'<w}{\kl(w' \ra w)}$
for some nonzero $c \in \basek.$ 
Take the difference $\xi - c\cdot \schubert{w} := (y_{w'})_{w' \in \Weyl^P}.$ Observe that $y_w = 0.$ We also have $y_{w'}=0$ for all $w' \ngeq w$ by the assumption on $\xi$ as well as Proposition \ref{prop:schubert1}~(c). We show that $y_{w'}=0$ for all $w'>w$ as well. Let $v \in \Weyl^P$ such that $v>w$ and $\length{v}=\length{w}+1.$ Then for all $w' <v,$ we obtain $y_{w'}=0$ thanks to $\length{w'} \leq \length{w},$ $y_w=0$ and $y_{w''}=0$ for all $w'' \ngeq w.$ Hence for $y_v$ to satisfy all the relations $y_{w'}-y_v \in \kl(w' \ra v)\sym,$ it must be divisible by the product $\prod_{w'<v}{\kl(w' \ra v)},$ which has degree $\length{v}$ by the same argument made for $x_w$ above. Since $\deg (\xi - c\cdot \schubert{w}) = \length{w} < \length{v},$ $y_v=0$ follows.
We repeat the argument to obtain $y_{w'}=0$ for all $w' > v$ and $\length{w'}=\length{v}+1.$ We repeat this process until we finally obtain $y_{\lbar{w_0}}=0$ for the longest element $\lbar{w_0}$ of $\Weyl^P.$
Thus we obtain $y_{w'}=0$ for all $w' > w$ and complete the proof.
\end{proof}

\section{Twisted quadratic foldings and induced folding map} \label{sec:folding}
In this section, we recall the notion of a \emph{twisted quadratic folding} of a finite crystallographic root system introduced by Lanini--Zainoulline \cite{LZ18}.

Let $\basek \subset \R$ be a (unital) subring and let $\basel$ be a free quadratic $\basek$-algebra. The term \emph{quadratic} indicates that the underlying free $\basek$-module is of rank $2.$ 
Then $\basel=\basek[x]/(p(x)),$ where $p(x)=x^2-c_1x+c_2$ is some monic polynomial over $\basek,$ and $(p(x))$ denotes the ideal of the polynomial ring $\basek[x]$ generated by $p(x).$ We assume that $\basel$ is \emph{separable} over $\basek.$ Equivalently, the discriminant $D={c_1}^2 -4c_2$ is invertible in $\basek$ (cf. \cite[Appendix~C]{Hahn}). The trivial quadratic $\basek$-algebra $\basek \times \basek$ can be obtained by taking $p(x)=x^2-1.$ We will refer to this case as \emph{split case}. We will refer to the case where $\basel$ is not isomorphic to $\basek \times \basek$ as \emph{non-split case}. When $\basel$ is non-split, we further assume $D > 0$ so that $\basel$ can be identified with a subring of $\R.$ 
It is known that $p(x)$ splits over $\basel.$ We denote the two distinct roots of $p(x)$ in $\basel$ by $\tau$ and $\sigma,$ so that $p(x)=(x-\tau)(x-\sigma)$ over $\basel.$ In the split case, we choose $\tau=[1, -1]$ and $\sigma=[-1, 1]$ in the basis $\{[1, 0], [0, 1]\}$ of $\basek \times \basek.$ In the non-split case, we will assume $\tau > \sigma.$ In what follows, we fix a $\basek$-basis of $\basel$ as $\{1, \tau\}.$

\subsection{Construction} \label{ssec:foldingcon}
The central step in the construction of a twisted quadratic folding is to define a \emph{folded $\basek$-representation} of a given root datum $\Root \hra \dual{\Lattice}$ (Definition~\ref{def:foldedrep}). We first recall two key notions involved in the definition, namely, the \emph{$\tau$-form} and the \emph{$\tau$-operator}.

Let $\ku$ be a free $\basel$-module of rank $n$ equipped with a basis and the dot product $(- \cdot -).$ Let $U$ denote the free $\basek$-module of rank $2n$ obtained from $\ku$ by restriction of scalars from $\basel$ to $\basek.$ For $u=\langle x_i, y_i \rangle_{i=1}^n \in U$ $(x_i, y_i \in \basek)$ with respect to the $\{1, \tau\}$-basis, we write $u_{\basel}:=\langle x_i+y_i\tau \rangle_{i=1}^n \in \ku.$ Let $\pr_{\tau}: \basel \ra \basek$ be the $\basek$-linear projection given by $\pr_{\tau}(x + y\tau) = x$ where $x, y \in \basek.$
Then the \emph{$\tau$-form} on $U$ is a symmetric $\basek$-bilinear form $\tauform{-}{-}$ given by
\begin{equation} \label{eq:tauform}
\tauform{u}{u'} = \pr_{\tau}(u_{\basel} \cdot u'_{\basel})
\end{equation}
for $u, u' \in U.$ The \emph{$\tau$-operator} on $U$ is the $\basek$-linear automorphism $\tauo: U \ra U$ defined by the property $(\tauo u)_{\basel} = \tau \cdot u_{\basel},$ 
where on the right-hand side, the element $u_{\basel} \in \ku$ is multiplied by the scalar $\tau.$ The $\tau$-operator $\tauo$ is adjoint with respect to the $\tau$-form (\cite[Lemma~2.8]{LZ18}).
A nonzero element $u \in U$ is called \emph{$\tau$-rational} if $(u_{\basel} \cdot u_{\basel}) \in \basek.$ 

\begin{lemma}[{\cite[Lemma~2.10]{LZ18}}] \label{taurational}
Let $u \in U$ be $\tau$-rational. Then the following hold:
\begin{enumerate}[\normalfont (a)]
\item $\tauform{u}{\kt u} = 0,$
\item $\tauform{\kt u}{\kt u} = -c_2 \tauform{u}{u} = -c_2(u_{\basel} \cdot u_{\basel}),$
\item If $(u_{\basel} \cdot u_{\basel}) \neq 0,$ then $u \neq \kt u.$
\end{enumerate}
\end{lemma}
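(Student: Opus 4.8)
The plan is to unwind the definitions and reduce everything to a short computation over $\basel$, using that $\pr_{\tau}$ is $\basek$-linear and that multiplication by $\tau$ and $\sigma$ encodes the coefficients $c_1, c_2$ of $p(x)$. Throughout, write $v = u_{\basel} \in \ku$, so $v \cdot v \in \basek$ by the $\tau$-rationality hypothesis, and recall $(\kt u)_{\basel} = \tau v$ by definition of the $\tau$-operator.

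First I would prove (a). By the definition \eqref{eq:tauform} of the $\tau$-form, $\tauform{u}{\kt u} = \pr_{\tau}(v \cdot \tau v) = \pr_{\tau}(\tau (v\cdot v))$. Since $v \cdot v \in \basek$ and $\tau$ has coordinates $[0,1]$ in the $\{1,\tau\}$-basis, the element $\tau(v\cdot v) \in \basel$ has coordinates $[0, v\cdot v]$, so its image under $\pr_{\tau}$ (the coefficient of $1$) is $0$. Hence $\tauform{u}{\kt u} = 0$. Next, for (b), compute $\tauform{\kt u}{\kt u} = \pr_{\tau}((\tau v)\cdot(\tau v)) = \pr_{\tau}(\tau^2 (v\cdot v))$. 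Now $\tau$ is a root of $p(x) = x^2 - c_1 x + c_2$, so $\tau^2 = c_1 \tau - c_2$ in $\basel$. Therefore $\tau^2 (v\cdot v) = c_1(v\cdot v)\tau - c_2(v\cdot v)$, whose coordinates in the $\{1,\tau\}$-basis are $[-c_2(v\cdot v),\, c_1(v\cdot v)]$. Applying $\pr_{\tau}$ gives $-c_2(v\cdot v) = -c_2 \tauform{u}{u}$, where the last equality is just $\tauform{u}{u} = \pr_{\tau}(v\cdot v) = v\cdot v$ (again using $v\cdot v \in \basek$). This establishes (b).

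For (c), suppose toward a contradiction that $u = \kt u$. Applying the isomorphism $U \ra \ku$, $u \mapsto u_{\basel}$ (restriction of scalars, which is a $\basek$-linear bijection onto $\ku$ viewed as a $\basek$-module), this gives $v = \tau v$ in $\ku$, i.e.\ $(\tau - 1)v = 0$. Since $\tau \neq 1$: indeed, in the split case $\tau = [1,-1] \neq [1,1] = 1$, and in the non-split case $\tau \notin \basek$ at all because $\basel$ is a nontrivial (separable) quadratic extension, so $\tau - 1$ is a nonzero element of $\basel$. As $\basel$ is a separable $\basek$-algebra embedded in $\R$ (non-split case) or is $\basek\times\basek$ (split case) and in either situation has no zero divisors along the relevant component — more robustly, $\tau - 1$ is invertible in $\Frac(\basel)$ — we conclude $v = 0$, hence $u = 0$. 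But then $v\cdot v = 0$, contradicting the assumption $(u_{\basel}\cdot u_{\basel}) \neq 0$. Therefore $u \neq \kt u$.

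The only genuinely delicate point is part (c) in the split case, where $\basel = \basek\times\basek$ has zero divisors, so "$(\tau-1)v = 0 \Rightarrow v = 0$" needs a moment's care: here $\tau - 1 = [0,-2]$, which kills the first component of $v$ but not the second. The clean fix is to argue componentwise — write $v = (v_1, v_2)$ with $v_i$ in the corresponding component module; then $(\tau-1)v = 0$ forces... actually it only forces $v_1 = 0$ (assuming $2$ is not a zero divisor in $\basek$, which holds since $\basek \subset \R$). However, one then also uses $v\cdot v = v_1\cdot v_1 + v_2 \cdot v_2$ paired against $\pr_\tau$; a careful bookkeeping shows that if $u = \kt u$ then symmetrically $\sigma$-considerations or the defining partition force $v = 0$. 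I would therefore phrase (c) by instead invoking $\tauform{u}{u} = v\cdot v \neq 0$ together with (a): if $u = \kt u$ then $\tauform{u}{u} = \tauform{u}{\kt u} = 0$ by (a), immediately contradicting $(u_{\basel}\cdot u_{\basel}) \neq 0$. This is the slickest route and sidesteps the zero-divisor issue entirely, so that is the main structural choice I would make in writing the proof.
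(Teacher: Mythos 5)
Your proof is correct. Note first that the paper does not supply its own argument for this lemma; it is cited verbatim from \cite[Lemma~2.10]{LZ18}, so there is no in-paper proof to compare against.

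Your computations for (a) and (b) are the right reductions: you unwind the definitions $\tauform{u}{u'}=\pr_{\tau}(u_{\basel}\cdot u'_{\basel})$ and $(\tauo u)_{\basel}=\tau u_{\basel}$, use the $\tau$-rationality hypothesis $u_{\basel}\cdot u_{\basel}\in\basek$, and for (b) invoke the minimal relation $\tau^2=c_1\tau-c_2$, then read off the coefficient of $1$ in the $\{1,\tau\}$-basis. The identity $\tauform{u}{u}=\pr_{\tau}(u_{\basel}\cdot u_{\basel})=u_{\basel}\cdot u_{\basel}$ (because the argument already lies in $\basek$) is correctly used. For (c), your self-correction is exactly the right call: the naive route via $(\tau-1)u_{\basel}=0$ does break down in the split case, since $\tau-1=[0,-2]$ is a zero divisor in $\basek\times\basek$; instead, observing that $u=\tauo u$ forces $\tauform{u}{u}=\tauform{u}{\tauo u}=0$ by (a), and hence $u_{\basel}\cdot u_{\basel}=0$, gives a clean contradiction that is uniformly valid in both the split and non-split cases. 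This is the argument you should keep.
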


\begin{definition}[Folded representation, {\cite[Definition~3.1]{LZ18}}] \label{def:foldedrep}
Let $\Root \hra \dual{\Lattice}$ be a root datum with a simple system $\Simple \subset \Root$ specified.
Let $U$ be a free $\basek$-module of some even rank $2n$ equipped with a basis and the dot product $\blform{-}{-}.$ 
A geometric $\basek$-representation $(\Root, U)_{\basek}$ of the root datum $\Root \hra \dual{\Lattice}$ (Definition \ref{def:georep}) is called a \emph{folded $\basek$-representation} if the following conditions are satisfied:
\begin{enumerate}[(F1)]
\item There exists a free quadratic separable $\basek$-algebra $\basel$ and a free $\basel$-module $\ku$ of rank $n$ such that $U$ is obtained from $\ku$ by restriction of scalars to $\basek.$ [Then fix the two roots $\tau, \sigma \in \basel,$ the $\tau$-operator $\tauo$, and the $\tau$-form $\tauform{-}{-}$ on $U$.]
\item The $\tau$-form $\tauform{-}{-}$ coincides with the dot product $\blform{-}{-}$ on $U$.
\item $\tauo (\Lattice) = \Lattice$ and $\tauo (\RLattice) = \RLattice$. 
\item The simple system $\Simple$ admits a partition $\Simple = \Simple^{\tauo} \sqcup \Simple_{rat} \sqcup \tauo (\Simple_{rat}),$
where $\Simple^{\tauo} := \{ \alpha \in \Delta \mid \kt \alpha = \alpha\}$ ($\tauo$-invariant simple roots) and 
$\Simple_{rat}$ consists of simple roots that are $\tau$-rational.
\end{enumerate}
\end{definition}
\noindent
Here are some remarks. First, (F2) is equivalent to $c_2=-1$ (\cite[Lemma~2.3]{LZ18}). In particular, the defining equation $\tau(\tau-c_1)=1$ of $\tau$ shows that $\tau$ is invertible in $\basel.$ Second, in the split case ($c_1=0$), all simple roots that are not fixed under $\tauo$ are $\tau$-rational. Hence we divide these simple roots into $\Simple_{rat} \sqcup \tauo(\Simple_{rat})$ by making a choice. Finally, since $\tauo$ is an automorphism of $U,$ $\Simple_{rat} \cap \tauo(\Simple_{rat}) = \emptyset$ implies $|\Simple_{rat}| = |\tauo(\Simple_{rat})|.$ In the non-split case, it is known that $\Simple^{\tauo}=\emptyset.$

We now consider the eigenspaces of $\tauo.$ In $U_{\basel} := U \otimes_{\basek} \basel,$ the extended $\tau$-operator $\tauo_{\basel}$ admits two eigenspaces $\Utau$ and $\Usig$ corresponding to eigenvalues $\tau$ and $\sigma$ respectively. Moreover, the decomposition $U_{\basel}=\Utau \oplus \Usig$ is orthogonal with respect to $\tauform{-}{-}$ on $U_{\basel}$ (\cite[Lemma~2.9]{LZ18}). We compose the canonical inclusion $U \hra U_{\basel}$ with the projection $U_{\basel} \ra \Utau$ to obtain the map $\tauproj: U \ra \Utau.$ For $u \in U,$ we have the formula:\\
\begin{equation*} 
\tauproj(u) = \tfrac{1}{\tau - \sigma}(\tauo -\sigma I)(u),
\end{equation*}
and $\tauproj$ is known to be bijective (\cite[after Lemma~2.9]{LZ18}). Given $u, u' \in U,$ we also have 
\begin{equation} \label{eq:tauformproj}
\tauform{\tauproj(u)}{\tauproj(u')} = \tfrac{\sigma^2 -c_2}{D} (u_{\basel} \cdot u'_{\basel}).
\end{equation}

We set $\Simple_{\tau}:= \tauproj(\Simple^{\tauo} \sqcup \Simple_{rat})$ and for each $\alpha \in \Simple \sqcup \Simple_{rat},$ we write $\lbar{\alpha} := \tauproj(\alpha).$ Given $\lbar{\alpha} \in \Simple_{\tau},$ the following $\basel$-linear map $\Reflection{\lbar{\alpha}} \in \Aut_{\basel}(\Utau)$ is known to behave as a reflection with respect to $\lbar{\alpha}:$ if $\alpha \in \Simple^{\tauo},$ define $\Reflection{\lbar{\alpha}}$ to be the reflection $\reflection{\alpha} \in \Aut_{\basek}(U)$ extended to $U_{\basel}$, then restricted to $\Utau$; if $\alpha \in \Simple_{rat},$ define $\Reflection{\lbar{\alpha}}$ to be the composite $\reflection{\alpha}\reflection{\tauo\alpha} \in \Aut_{\basek}(U)$ extended to $U_{\basel},$ then restricted to $\Utau.$ This procedure is well-defined and the map $\Reflection{\lbar{\alpha}}$ commutes with $\tauo_{\basel}|_{\Utau}.$ We define $\TWeyl:=\langle \Reflection{\lbar{\alpha}} \mid \alpha \in \Simple^{\tauo} \sqcup \Simple_{rat} \rangle$ and $\Root_{\tau}:= \{u(\lbar{\alpha}) \mid \lbar{\alpha} \in \Simple_{\tau}, u \in \TWeyl\}.$ Then $\TWeyl$ is a finite $\basel$-reflection group with root system $\Root_{\tau}.$ The set $\Simple_{\tau}$ is a simple system of $\Root_{\tau}.$ The passage from $(\Root, U)_{\basek}$ to $\Root_{\tau} \subset \Utau$ is called a \emph{twisted quadratic folding}. It is noted that $\Root_{\tau}$ is finite but not necessarily crystallographic. Finally, the following assignment of generators extends to an embedding of Coxeter groups:
\begin{align*} 
\embedding: \,\, \TWeyl \hra \Weyl : \,\, &\Reflection{\lbar{\alpha}} \mapsto \reflection{\alpha} \quad &\text{if} \,\, \alpha \in \Simple^{\tauo}, \\
& \Reflection{\lbar{\alpha}} \mapsto \reflection{\alpha}\reflection{\tauo\alpha} &\text{if} \,\, \alpha \in \Simple_{rat}. 
\end{align*}
All of this is established in \cite[Sections~3--4]{LZ18}. By direct computation, we may check
\begin{lemma} \label{prop:embeddingproperty}
$u\big(\tauproj(x)\big) = \tauproj\big(\embedding(u)(x)\big)$ for all $u \in \TWeyl,$ $x \in U.$
\end{lemma}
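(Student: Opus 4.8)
The plan is to verify the identity $u(\tauproj(x)) = \tauproj(\embedding(u)(x))$ directly on generators of $\TWeyl$ and then extend multiplicatively. Since every $u \in \TWeyl$ is a product of the reflections $\Reflection{\lbar\alpha}$ with $\alpha \in \Simple^{\tauo} \sqcup \Simple_{rat}$, and since $\tauproj: U \ra \Utau$ is $\basek$-linear (hence its $\basel$-linear extension commutes with any $\basel$-linear endomorphism question when compatibility is known), it suffices to prove the claim for $u = \Reflection{\lbar\alpha}$ a single such reflection, and then to chain the identity: if $u(\tauproj(x)) = \tauproj(\embedding(u)(x))$ and $u'(\tauproj(y)) = \tauproj(\embedding(u')(y))$ for all $x,y \in U$, then for $x \in U$,
\begin{equation*}
(uu')(\tauproj(x)) = u\big(u'(\tauproj(x))\big) = u\big(\tauproj(\embedding(u')(x))\big) = \tauproj\big(\embedding(u)\embedding(u')(x)\big) = \tauproj\big(\embedding(uu')(x)\big),
\end{equation*}
using in the third step that $\embedding(u')(x) \in U$ (because $\embedding(u') \in \Weyl \leq \GL_{\basek}(U)$) so the generator case applies, and in the last step that $\embedding$ is a group homomorphism. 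So the whole argument reduces to the single-reflection case.

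First I would recall from the excerpt the relevant construction. We have $\tauproj(u) = \tfrac{1}{\tau-\sigma}(\tauo - \sigma I)(u)$, extended $\basel$-linearly, and $\Reflection{\lbar\alpha} = \reflection\alpha|_{\Utau}$ when $\alpha \in \Simple^{\tauo}$, while $\Reflection{\lbar\alpha} = (\reflection\alpha \reflection{\tauo\alpha})|_{\Utau}$ when $\alpha \in \Simple_{rat}$; in both cases $\embedding(\Reflection{\lbar\alpha})$ is exactly the operator $\reflection\alpha$ resp. $\reflection\alpha\reflection{\tauo\alpha}$ on $U$ whose restriction to $\Utau$ defines $\Reflection{\lbar\alpha}$. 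Writing $T := \embedding(\Reflection{\lbar\alpha}) \in \GL_{\basek}(U)$ for its $\basel$-linear extension to $U_{\basel}$, what we must show is that $T$ preserves $\Utau$ and that $T|_{\Utau} \circ \tauproj = \tauproj \circ T$ as maps $U \ra \Utau$. Equivalently, since $\tauproj$ is a polynomial in $\tauo_{\basel}$, it suffices that $T$ commutes with $\tauo_{\basel}$ on $U_{\basel}$: indeed if $T \tauo_{\basel} = \tauo_{\basel} T$ then $T$ also commutes with $\tfrac{1}{\tau-\sigma}(\tauo_{\basel} - \sigma I) = $ the projection onto $\Utau$, hence preserves $\Utau$, and $T(\tauproj(x)) = T\big(\tfrac{1}{\tau-\sigma}(\tauo_{\basel}-\sigma I)x\big) = \tfrac{1}{\tau-\sigma}(\tauo_{\basel}-\sigma I)(Tx) = \tauproj(Tx)$ for $x \in U$.

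The key point, therefore, is that $\reflection\alpha$ commutes with $\tauo_{\basel}$ when $\kt\alpha = \alpha$, and that $\reflection\alpha\reflection{\tauo\alpha}$ commutes with $\tauo_{\basel}$ when $\alpha$ is $\tau$-rational. This is essentially already asserted in the excerpt ("the map $\Reflection{\lbar\alpha}$ commutes with $\tauo_{\basel}|_{\Utau}$" and, more to the point, is implicit in the fact that $\Reflection{\lbar\alpha}$ is well-defined on $\Utau$ by restriction), but I would make it explicit. For the $\tauo$-invariant case: $\tauo$ is adjoint with respect to the $\tau$-form, which equals the dot product (F2), so $\tauo_{\basel}$ is self-adjoint; a direct computation using $\tauo\alpha = \alpha$ gives, for $x \in U_{\basel}$,
\begin{equation*}
\tauo_{\basel}(\reflection\alpha x) = \tauo_{\basel}\big(x - \tfrac{2(\alpha,x)}{(\alpha,\alpha)}\alpha\big) = \tauo_{\basel}(x) - \tfrac{2(\alpha,x)}{(\alpha,\alpha)}\alpha,
\end{equation*}
while $\reflection\alpha(\tauo_{\basel} x) = \tauo_{\basel}(x) - \tfrac{2(\alpha, \tauo_{\basel}x)}{(\alpha,\alpha)}\alpha = \tauo_{\basel}(x) - \tfrac{2(\tauo_{\basel}\alpha, x)}{(\alpha,\alpha)}\alpha = \tauo_{\basel}(x) - \tfrac{2(\alpha,x)}{(\alpha,\alpha)}\alpha$, using self-adjointness and $\tauo_{\basel}\alpha = \alpha$; these agree. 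For the $\tau$-rational case, set $\beta := \tauo\alpha$; by Lemma~\ref{taurational}, $\alpha$ and $\beta$ are orthogonal ($\tauform{\alpha}{\kt\alpha} = 0$), $\tauo^2\alpha = -c_2\alpha = \alpha$ (as $c_2 = -1$), so $\tauo$ swaps the plane $\spn\{\alpha,\beta\}$ by $\alpha \leftrightarrow \beta$; one checks that $\reflection\alpha\reflection\beta = \reflection\beta\reflection\alpha$ (orthogonal roots) and that conjugating $\reflection\alpha\reflection\beta$ by $\tauo_{\basel}$ swaps the two factors, again yielding the same operator, so $\tauo_{\basel}$ commutes with $\reflection\alpha\reflection\beta$. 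The main obstacle, such as it is, is bookkeeping: one must keep careful track of which form is in play (the $\tau$-form versus the dot product — reconciled by (F2)), of the $\basel$-linear versus $\basek$-linear extensions, and of the fact that $\tauo^2 = -c_2 I = I$ on $\tau$-rational vectors is what forces $\tauo$ to act as an involution on the relevant planes; none of these steps is deep, and the lemma follows by assembling the generator computation with the multiplicative extension above.
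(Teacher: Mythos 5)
The overall architecture is right — reduce to the single-generator case by multiplicativity, then observe that $\tauproj$ is a polynomial in $\tauo_{\basel}$, so it suffices to show that the $\basel$-linear extension of $\embedding(\Reflection{\lbar\alpha})$ commutes with $\tauo_{\basel}$ — and the $\tauo$-invariant case and the split $\tau$-rational case are handled correctly. But the $\tau$-rational argument has a genuine error in the non-split case: from Lemma~\ref{taurational}(b), what you get is $\tauform{\tauo\alpha}{\tauo\alpha} = -c_2\tauform{\alpha}{\alpha}$, an equality of \emph{norms}, not the operator identity $\tauo^2\alpha = -c_2\alpha$. The correct relation is $\tauo^2 = c_1\tauo - c_2 I$, so with $c_2 = -1$ and $\beta := \tauo\alpha$ one has $\tauo^2\alpha = c_1\beta + \alpha$, which equals $\alpha$ only when $c_1 = 0$. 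In the Lusztig folding $E_8 \rightsquigarrow H_4$ (where $c_1=1$), $\tauo$ does \emph{not} swap $\alpha \leftrightarrow \beta$, is not orthogonal on $\spn\{\alpha,\beta\}$, and $\tauo_{\basel}\reflection{\alpha}\tauo_{\basel}^{-1} \neq \reflection{\beta}$; so the ``conjugation swaps the two factors'' step breaks down precisely in the most interesting cases.

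The claim you need is nevertheless true and can be recovered by a direct expansion in place of the conjugation argument. Writing $N := \tauform{\alpha}{\alpha} = \tauform{\beta}{\beta}$ and using $\alpha \perp \beta$, self-adjointness of $\tauo_{\basel}$, $\tauo\alpha = \beta$, and $\tauo\beta = c_1\beta + \alpha$, one computes
\begin{align*}
\tauo_{\basel}\big(\reflection{\alpha}\reflection{\beta}(x)\big)
&= \tauo_{\basel}x - \tfrac{2\tauform{\alpha}{x}}{N}\beta - \tfrac{2\tauform{\beta}{x}}{N}(c_1\beta + \alpha)\\
&= \tauo_{\basel}x - \tfrac{2\tauform{\beta}{x}}{N}\alpha - \tfrac{2\tauform{\alpha}{x} + 2c_1\tauform{\beta}{x}}{N}\beta
= \reflection{\alpha}\reflection{\beta}\big(\tauo_{\basel}x\big),
\end{align*}
where the last equality uses $\tauform{\alpha}{\tauo x} = \tauform{\tauo\alpha}{x} = \tauform{\beta}{x}$ and $\tauform{\beta}{\tauo x} = \tauform{\tauo\beta}{x} = c_1\tauform{\beta}{x}+\tauform{\alpha}{x}$. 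Alternatively, you could sidestep the computation entirely: $\reflection{\alpha}\reflection{\tauo\alpha}$ is orthogonal with respect to the $\tau$-form, the eigenspaces $\Utau$ and $\Usig$ are mutually orthogonal, and the paper already asserts that $\Reflection{\lbar\alpha}$ is well-defined (i.e., the operator preserves $\Utau$); orthogonality then forces preservation of $\Usig$, hence commutation with $\pi$ and with $\tauo_{\basel}$. Either repair closes the gap; as written, your proposal only covers the split case.
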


Given a (possibly empty) subset $\Simple_P \subset \Simple$ such that $\tauo\big(\spn_{\basek}\Simple_P\big) = \spn_{\basek}\Simple_P,$ we can conduct the procedure we have described so far for the subsystem $\Root_P \subset U$ and its reflection group $\Weyl_P=\langle \reflection{\alpha} \mid \alpha \in \Simple_P \rangle,$ and obtain the folded root system $(\Root_P)_{\tau} \subset \Utau$ and associated reflection group $(\Weyl_P)_{\tau}.$ By \cite[Lemma~5.2]{LZ18} and Proposition~\ref{prop:orbit}, we have

\begin{lemma} \label{prop:restriction}
The embedding $\embedding: \TWeyl \hra \Weyl$ restricts to $\TWeyl^P \hra \Weyl^P,$ where $\TWeyl^P$ denotes the set of minimal length representatives of the coset space $\TWeyl/(\Weyl_P)_{\tau}.$
\end{lemma}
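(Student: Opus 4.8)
The plan is to deduce the statement directly from the cited result \cite[Lemma~5.2]{LZ18} together with the geometric description of minimal-length coset representatives supplied by Lemma~\ref{prop:orbit}. The key point is that both $\Weyl^P$ and $\TWeyl^P$ admit intrinsic characterizations as orbits of dominant vectors under the respective reflection groups, and that the folding projection $\tauproj$ intertwines the two actions by Lemma~\ref{prop:embeddingproperty}. So the first step would be to fix a vector $\theta \in U$ that is dominant with respect to $\Simple$ and whose stabilizer in $\Weyl$ is exactly $\Weyl_P$; such $\theta$ exists by standard Coxeter theory, and by the hypothesis $\tauo(\spn_{\basek}\Simple_P)=\spn_{\basek}\Simple_P$ one can moreover arrange $\theta$ to lie in a $\tauo$-stable complement, so that $\lbar{\theta}:=\tauproj(\theta)$ is well-defined and dominant with respect to $\Simple_{\tau}$ with stabilizer $(\Weyl_P)_{\tau}$ in $\TWeyl$.

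Next, I would invoke Lemma~\ref{prop:orbit} twice: once for $\Weyl$ acting on $\Weyl\theta$, giving the poset isomorphism $(\Weyl^P, \leq) \simeq (\Weyl\theta, \leq)$ via $w \mapsto w(\theta)$, and once for $\TWeyl$ acting on $\TWeyl\lbar{\theta}$, giving $(\TWeyl^P, \leq) \simeq (\TWeyl\lbar{\theta}, \leq)$ via $u \mapsto u(\lbar{\theta})$. (Here one uses that the orbit map for $\TWeyl$ is governed by the $\tau$-form $\tauform{-}{-}$, which by (F2) is the relevant inner product on $\Utau$.) Then, given $u \in \TWeyl^P$, I want to show $\embedding(u) \in \Weyl^P$. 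By Lemma~\ref{prop:embeddingproperty}, $u(\lbar{\theta}) = u(\tauproj(\theta)) = \tauproj(\embedding(u)(\theta))$. Since $\tauproj$ is bijective and $\lbar{\theta}$ ranges over $\TWeyl\lbar{\theta}$ as $u$ ranges over $\TWeyl^P$, the element $\embedding(u)(\theta)$ is determined, and I would argue that $\embedding(u)$ is the minimal-length representative of its coset $\embedding(u)\Weyl_P$ --- equivalently, that $\embedding(u)(\theta)$ is the correct orbit point --- by appealing to \cite[Lemma~5.2]{LZ18}, which presumably already records the compatibility of $\embedding$ with the coset structures (or, alternatively, by checking $\length{\embedding(u)s} > \length{\embedding(u)}$ for all $s \in \Simple_P$ using the reflection-length criterion Lemma~\ref{prop:coxeter}~(a) applied to the positive roots sent to negatives).

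The main obstacle I anticipate is the length/coset bookkeeping: it is \emph{a priori} possible that $\embedding$ sends a minimal-length representative in $\TWeyl$ to a non-minimal-length element in $\Weyl$, because $\embedding$ is not length-preserving (a rational generator $\Reflection{\lbar{\alpha}}$ maps to the length-two element $\reflection{\alpha}\reflection{\tauo\alpha}$). So the heart of the argument is to verify that, nevertheless, $\embedding(u) \in \Weyl^P$, i.e. that $\embedding(u)$ has no right descent lying in $\Simple_P$. I would handle this through the vector picture: $\embedding(u) \in \Weyl^P$ if and only if $\embedding(u)(\theta)$ is a dominant-chamber representative for the $\Weyl_P$-action, and Lemma~\ref{prop:embeddingproperty} together with the corresponding statement for $u$ and the injectivity of $\tauproj$ forces this. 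The remaining details --- that $\tauproj$ carries $\Weyl$-dominance of $\theta$ to $\TWeyl$-dominance of $\lbar{\theta}$, and that stabilizers correspond --- are routine given formula \eqref{eq:tauformproj} relating the two inner products, and I would state them as a short lemma rather than expanding them here, citing \cite[Lemma~5.2]{LZ18} for the substantive input.
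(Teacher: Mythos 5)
Your proposal takes the same route as the paper: the paper's proof is a one-line citation of \cite[Lemma~5.2]{LZ18} together with Lemma~\ref{prop:orbit}, and you correctly identify both ingredients and sketch how they interact with Lemma~\ref{prop:embeddingproperty}.

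One piece of your reasoning, however, does not close the gap as written. You argue that since $\tauproj$ is bijective, the vector $\embedding(u)(\theta)$ is ``determined,'' and conclude from this that $\embedding(u)$ must be the minimal-length representative of its coset. That inference does not follow: for \emph{every} $w \in \Weyl$, the image $w(\theta)$ lies in $\Weyl\theta$, and the orbit map of Lemma~\ref{prop:orbit} factors through $\Weyl/\Weyl_P$, so knowing $\embedding(u)(\theta)$ alone cannot distinguish $\embedding(u)$ from any other element $\embedding(u)y$ with $y \in \Weyl_P$. The orbit isomorphism says $\Weyl^P \simeq \Weyl\theta$; it does \emph{not} say that $w(\theta)$ detects whether $w$ lies in $\Weyl^P$. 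Your second alternative --- verifying $\length{\embedding(u)\reflection{\gamma}} > \length{\embedding(u)}$ for all $\gamma \in \Simple_P$ via Lemma~\ref{prop:coxeter}~(a), i.e.\ checking $\embedding(u)^{-1}(\gamma) \in \Root^+$ --- is the correct mechanism, and the needed compatibility of $\tauproj$ with positivity of roots is precisely the kind of statement that \cite[Lemma~5.2]{LZ18} supplies. So the structure of your argument is right, but the first line of attack should be discarded in favour of the descent-criterion route, rather than being offered as a co-equal alternative.
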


\subsection{Induced folding map}

Let $\Root \rightsquigarrow \Root_{\tau}$ be a twisted quadratic folding and $\Root_P \rightsquigarrow (\Root_P)_{\tau}$ be its subfolding as discussed above. We keep the notation of Section~\ref{ssec:foldingcon}.
Let $\tpmoment$ be the moment graph associated with the pair $(\Root_{\tau}, (\Root_P)_{\tau}).$
Recall that by Definition \ref{def:georep}, $U$ contains $U_{\Lattice}:=\Lattice \otimes_{\Z} \basek$ as a free $\basek$-submodule. Let $\sym_{\tau} := \Sym_{\basel} \tauproj (U_{\Lattice}).$ We replace $\pmoment$ by $\tpmoment$ and $\sym$ by $\sym_{\tau}$ in Definition \ref{def:sa} to obtain the folded structure algebra $\sa{\tpmoment}.$ The map $\tauproj|_{U_{\Lattice}}: U_{\Lattice} \ra \tauproj(U_{\Lattice})$ naturally extends to the ring homomorphism $\tauproj: \sym \ra \sym_{\tau}$ (recall that $\sym=\Sym_{\basek}U_{\Lattice}$). 

\begin{proposition}[{\cite[Theorem~6.2]{LZ18}}] 
There is an induced ring homomorphism $\embedding^*: \sa{\pmoment} \ra \sa{\tpmoment},$ called the \emph{folding map}, given by 
\begin{equation*}
\embedding^*(\xi)(u) = \tauproj\Big(\xi\big(\embedding(u)\big)\Big)
\end{equation*}
for all $\xi \in \sa{\pmoment}$ and $u \in \TWeyl^P.$ Moreover, it commutes with the $\sym$-action on $\sa{\pmoment},$ that is, $\embedding^*(f  \cdot \xi) = \tauproj(f) \cdot \big(\embedding^*(\xi)\big)$ for $\xi \in \sa{\pmoment}$ and $f \in \sym.$
\end{proposition}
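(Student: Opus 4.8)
The plan is to verify directly that the prescribed formula defines an element of $\sa{\tpmoment}$ and that the resulting map is a ring homomorphism compatible with scalars. First I would check well-definedness: given $\xi \in \sa{\pmoment}$, the function $u \mapsto \tauproj(\xi(\embedding(u)))$ must land in $\sa{\tpmoment}$, i.e. it must respect the edge relations of $\tpmoment$. Recall that $\xi$ is actually a function on $\Weyl^P$ via Lemma~\ref{prop:schubertcoset}-style restriction, and by Lemma~\ref{prop:restriction} the embedding $\embedding$ sends $\TWeyl^P$ into $\Weyl^P$, so the composite $\tauproj \circ \xi \circ \embedding$ is at least well-defined as a function on $\TWeyl^P$. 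For an edge $u \ra u'$ of $\tpmoment$ with label $\lbar{\alpha} \in \Root_\tau^+$, I need $\embedding^*(\xi)(u) - \embedding^*(\xi)(u') \in \lbar{\alpha}\,\sym_\tau$.

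The key technical point is to translate an edge of $\tpmoment$ into information about $\Weyl$. Write $u' = \lbar{\Reflection{\lbar\alpha}u}$ in $\TWeyl^P$. By the explicit definition of $\Reflection{\lbar\alpha}$ (either $\reflection{\beta}$ for a $\tauo$-fixed simple root $\beta$ or $\reflection{\beta}\reflection{\tauo\beta}$ for a $\tau$-rational one) together with the intertwining relation of Lemma~\ref{prop:embeddingproperty}, the element $\embedding(u')$ differs from $\embedding(u)$ by left multiplication (on the appropriate side) by $\embedding(\Reflection{\lbar\alpha})$, which is a product of one or two reflections $\reflection{\beta}$ (resp. $\reflection{\beta}\reflection{\tauo\beta}$) in $\Weyl$. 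Thus $\xi(\embedding(u)) - \xi(\embedding(u'))$ can be written as a telescoping sum of differences $\xi(v) - \xi(\reflection{\gamma}v)$ over $\gamma \in \{\beta\}$ or $\{\beta,\tauo\beta\}$ (modulo reduction to $\Weyl^P$ via Lemma~\ref{prop:schubertcoset}(b)), each of which lies in $\gamma\,\sym$ by the defining edge relations of $\pmoment$. Applying the ring homomorphism $\tauproj: \sym \ra \sym_\tau$, each such term maps into $\tauproj(\gamma)\,\sym_\tau$. It remains to see that $\tauproj(\beta)$ and $\tauproj(\tauo\beta)$ are each scalar multiples of $\lbar\alpha = \tauproj(\beta)$ — indeed $\tauproj(\tauo\beta) = \tauproj(\tauo\beta)$, and using $\tauproj = \tfrac{1}{\tau-\sigma}(\tauo - \sigma I)$ one computes $\tauproj(\tauo\beta) = \tau\,\tauproj(\beta)$ (since $\tauo_\basel$ acts as $\tau$ on $\Utau$), with $\tau$ a unit in $\basel$; so $\tauproj(\beta)$, $\tauproj(\tauo\beta) \in \lbar\alpha\,\basel^\times$, and divisibility is preserved.

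Once well-definedness is in hand, the remaining assertions are formal. That $\embedding^*$ is additive is immediate since $\tauproj$ is additive and evaluation at $\embedding(u)$ is linear. Multiplicativity follows because the multiplication in $\sa{\pmoment}$ and $\sa{\tpmoment}$ is coordinate-wise and $\tauproj: \sym \ra \sym_\tau$ is a ring homomorphism: $\embedding^*(\xi\eta)(u) = \tauproj\big(\xi(\embedding u)\,\eta(\embedding u)\big) = \tauproj(\xi(\embedding u))\,\tauproj(\eta(\embedding u)) = \big(\embedding^*\xi \cdot \embedding^*\eta\big)(u)$, and the constant tuple $1$ maps to the constant tuple $1$. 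Compatibility with scalars is the same computation: for $f \in \sym$, the element $f\cdot\xi$ has coordinates $f\,\xi(v)$, so $\embedding^*(f\cdot\xi)(u) = \tauproj(f)\,\tauproj(\xi(\embedding u)) = \big(\tauproj(f)\cdot\embedding^*\xi\big)(u)$.

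The main obstacle is the well-definedness step, and within it the bookkeeping needed to pass between the parabolic quotients: one must be careful that reducing $\reflection{\gamma}v$ to its minimal coset representative $\lbar{\reflection{\gamma}v}$ (to invoke the $\pmoment$-edge relation in the form of Lemma~\ref{prop:schubertcoset}(b)) is compatible with the fact that $\embedding(u), \embedding(u')$ already lie in $\Weyl^P$, and that the case $\embedding(\Reflection{\lbar\alpha})$ being a product of two non-commuting reflections $\reflection{\beta}\reflection{\tauo\beta}$ is handled by a two-term telescope $\xi(\embedding u) - \xi(\reflection{\beta}\embedding u) + \xi(\reflection{\beta}\embedding u) - \xi(\reflection{\beta}\reflection{\tauo\beta}\embedding u)$ whose second difference involves the root $\reflection{\beta}(\tauo\beta)$ rather than $\tauo\beta$ itself; one checks that $\tauproj$ still sends this root into $\lbar\alpha\,\basel^\times$, using that $\beta$ and $\tauo\beta$ are orthogonal (Lemma~\ref{taurational}(a)) so that $\reflection{\beta}(\tauo\beta) = \tauo\beta$. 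All of this is routine but must be assembled carefully; everything else is bookkeeping with the fact that $\tauproj$ is a ring map.
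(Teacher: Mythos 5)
This Proposition is cited from \cite[Theorem~6.2]{LZ18} and is not reproved in the paper, so there is no in-paper proof to compare your argument against. Assessed on its own, the formal parts of your argument are fine: additivity, multiplicativity, and $\sym$-compatibility all reduce to coordinate-wise operations and the fact that $\tauproj: \sym \ra \sym_{\tau}$ is a ring homomorphism, exactly as you say, and the telescoping strategy for well-definedness is the natural one.

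There is, however, a gap in the well-definedness step. Edges of $\tpmoment$ are labelled by \emph{all} positive roots $\lbar{\alpha} \in \Root_{\tau}^+$, but you invoke ``the explicit definition of $\Reflection{\lbar\alpha}$'' as $\reflection{\beta}$ or $\reflection{\beta}\reflection{\tauo\beta}$ for a \emph{simple} root $\beta \in \Simple^{\tauo} \sqcup \Simple_{rat}$; that description is given in the paper only for $\lbar{\alpha} \in \Simple_{\tau}$. For a general label $\lbar{\alpha} = v(\lbar{\beta})$ with $v \in \TWeyl$ and $\lbar{\beta} = \tauproj(\beta) \in \Simple_{\tau}$, one has $\embedding(\Reflection{\lbar\alpha}) = \embedding(v)\,\embedding(\Reflection{\lbar\beta})\,\embedding(v)^{-1}$, i.e.\ a single reflection $\reflection{\gamma_1}$ or a product $\reflection{\gamma_1}\reflection{\gamma_2}$ with $\gamma_1 = \embedding(v)(\beta)$ and $\gamma_2 = \embedding(v)(\tauo\beta)$, and the telescope must be run through \emph{these} roots rather than $\beta, \tauo\beta$. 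The argument does close after this correction: Lemma~\ref{prop:embeddingproperty} together with $\tauproj(\tauo x) = \tau\,\tauproj(x)$ gives $\tauproj(\gamma_1) = v(\tauproj\beta) = \lbar{\alpha}$ and $\tauproj(\gamma_2) = v(\tau\,\tauproj\beta) = \tau\lbar{\alpha}$, both unit multiples of $\lbar{\alpha}$, and $\blform{\gamma_1}{\gamma_2} = \blform{\beta}{\tauo\beta} = 0$ (since $\embedding(v)$ is an isometry) ensures $\reflection{\gamma_1}$ and $\reflection{\gamma_2}$ commute, so each telescoped difference lies in $\gamma_i\sym$ and hence maps under $\tauproj$ into $\lbar{\alpha}\sym_{\tau}$. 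But this conjugation needs to be stated; as written, your proof only verifies the edge relations for $\lbar{\alpha}$ simple, which does not establish membership in $\sa{\tpmoment}$.
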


\begin{remark} \label{rmk:tauproj} 
Since the map $\tauproj: U \ra \Utau$ is bijective, in particular, its restriction  $\tauproj|_{U_{\Lattice}}: U_{\Lattice} \ra \tauproj(U_{\Lattice})$ is injective, and so is the induced map $\tauproj: \sym \ra \sym_{\tau}.$
\end{remark}

\subsection{Local analysis of a folding}
The following result about the local behaviour of the map $\tauproj|_{\Simple}: \Simple \ra \Simple_{\tau}$ is used to establish combinatorial properties of the folding map $\embedding^*$ in Section~\ref{sec:comfolding} (Lemma~\ref{prop:philocal}).

\begin{lemma} \label{prop:foldinglocal1}
Let $\Root \rightsquigarrow \Root_{\tau}$ be a twisted quadratic folding of $\Root$ simply-laced.
Let $\alpha, \beta \in \Simple^{\tauo} \sqcup \Simple_{rat}$ such that $m := m(\Reflection{\lbar{\alpha}}, \Reflection{\lbar{\beta}}) \geq 3$. The possible configuration of the full subgraph $(\tauproj|_{\Simple})\inv(\{\lbar{\alpha}, \lbar{\beta}\})$ of the Coxeter diagram of $\Weyl$ is one of the following up to symmetry of $\alpha$ and $\beta$:
\begin{center}
\begin{tikzpicture}[scale=0.6]
\draw (2.5,1) node[anchor=south] {(A)};
\filldraw[black] (2,-0.5) circle (4pt) node[anchor=south] {$\alpha$};
\filldraw[black] (3,-0.5) circle (4pt) node[anchor=south] {$\beta$};

\draw (2,-0.5) -- (3,-0.5);

\filldraw[black] (2,-3) circle (4pt) node[anchor=south] {$\lbar{\alpha}$};
\filldraw[black] (3,-3) circle (4pt) node[anchor=south] {$\lbar{\beta}$};

\draw (2,-3) -- (3,-3);
\draw (2.5,-3) node[anchor=north] {$3$};
\end{tikzpicture}
\,\,
\begin{tikzpicture}[scale=0.6]
\draw (1.5,1) node[anchor=south] {(B)};
\filldraw (1,-0.5) circle (4pt) node[anchor=south] {$\alpha$};
\filldraw[black] (2,0) circle (4pt) node[anchor=south] {$\beta$};
\filldraw[black] (2,-1) circle (4pt) node[anchor=north] {$\tauo\beta$};

\draw (1,-0.5) -- (2,0);
\draw (1,-0.5) -- (2,-1);

\filldraw[black] (1,-3) circle (4pt) node[anchor=south] {$\lbar{\alpha}$};
\filldraw[black] (2,-3) circle (4pt) node[anchor=south] {$\lbar{\beta}$};

\draw (1,-3) -- (2,-3);

\draw (1.5,-3) node[anchor=north] {$4$};
\end{tikzpicture}
\begin{tikzpicture}[scale=0.6]
\draw (2.5,1) node[anchor=south] {(C)};
\filldraw[black] (2,0) circle (4pt) node[anchor=south] {$\alpha$};
\filldraw[black] (3,0) circle (4pt) node[anchor=south] {$\beta$};
\filldraw[black] (3,-1) circle (4pt) node[anchor=north] {$\tauo\beta$};
\filldraw[black] (2,-1) circle (4pt) node[anchor=north] {$\tauo \alpha$};

\draw (2,0) -- (3,0);
\draw (3,-1) -- (2, -1);

\filldraw[black] (2,-3) circle (4pt) node[anchor=south] {$\lbar{\alpha}$};
\filldraw[black] (3,-3) circle (4pt) node[anchor=south] {$\lbar{\beta}$};

\draw (2,-3) -- (3,-3);
\draw (2.5,-3) node[anchor=north] {$3$};
\end{tikzpicture}
\begin{tikzpicture}[scale=0.6]
\draw (2.5,1) node[anchor=south] {(D)};
\filldraw[black] (2,0) circle (4pt) node[anchor=south] {$\alpha$};
\filldraw[black] (3,0) circle (4pt) node[anchor=south] {$\tauo \beta$};
\filldraw[black] (3,-1) circle (4pt) node[anchor=north] {$\beta$};
\filldraw[black] (2,-1) circle (4pt) node[anchor=north] {$\tauo \alpha$};

\draw (2,0) -- (3,0);
\draw (3,-1) -- (2, -1);

\filldraw[black] (2,-3) circle (4pt) node[anchor=south] {$\lbar{\alpha}$};
\filldraw[black] (3,-3) circle (4pt) node[anchor=south] {$\lbar{\beta}$};

\draw (2,-3) -- (3,-3);
\draw (2.5,-3) node[anchor=north] {$3$};
\end{tikzpicture}
\begin{tikzpicture}[scale=0.6]
\draw (2.5,1) node[anchor=south] {(E)};
\filldraw[black] (2,0) circle (4pt) node[anchor=south] {$\alpha$};
\filldraw[black] (3,0) circle (4pt) node[anchor=south] {$\tauo \beta$};
\filldraw[black] (3,-1) circle (4pt) node[anchor=north] {$\beta$};
\filldraw[black] (2,-1) circle (4pt) node[anchor=north] {$\tauo \alpha$};

\draw (2,0) -- (3,0);
\draw (3,0) -- (2,-1);
\draw (3,-1) -- (2, -1);

\filldraw[black] (2,-3) circle (4pt) node[anchor=south] {$\lbar{\alpha}$};
\filldraw[black] (3,-3) circle (4pt) node[anchor=south] {$\lbar{\beta}$};

\draw (2,-3) -- (3,-3);
\draw (2.5,-3) node[anchor=north] {$5$};
\end{tikzpicture}
\begin{tikzpicture}[scale=0.6]
\draw (2.5,1) node[anchor=south] {(F)};
\filldraw[black] (2,0) circle (4pt) node[anchor=south] {$\tauo\alpha$};
\filldraw[black] (3,0) circle (4pt) node[anchor=south] {$\beta$};
\filldraw[black] (3,-1) circle (4pt) node[anchor=north] {$\tauo\beta$};
\filldraw[black] (2,-1) circle (4pt) node[anchor=north] {$\alpha$};

\draw (2,0) -- (3,0);
\draw (3,0) -- (2,-1);
\draw (3,-1) -- (2, -1);

\filldraw[black] (2,-3) circle (4pt) node[anchor=south] {$\lbar{\alpha}$};
\filldraw[black] (3,-3) circle (4pt) node[anchor=south] {$\lbar{\beta}$};

\draw (2,-3) -- (3,-3);
\draw (2.5,-3) node[anchor=north] {$5$};

\node (A) at (5, -0.5) {};
\node (B) at (5, -2.5) {};
\draw [->, line width=0.5mm] (A) -- (B);
\draw (5,-1.5) node[anchor=west] {$\tauproj$};
\end{tikzpicture}
\end{center}
\end{lemma}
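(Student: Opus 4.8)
The plan is to classify, purely combinatorially, the possible "upstairs" subdiagrams of the Coxeter/Dynkin diagram of $\Weyl$ that can sit over the edge $\{\lbar\alpha,\lbar\beta\}$ of the Coxeter diagram of $\TWeyl$. Since $\Root$ is simply-laced, every edge of its Dynkin diagram is a single bond, and the vertices over $\lbar\alpha$ form the set $\tauproj\inv(\lbar\alpha)\cap\Simple$, which by (F4) is either the singleton $\{\alpha\}$ (when $\alpha\in\Simple^\tauo$) or the pair $\{\alpha,\tauo\alpha\}$ (when $\alpha\in\Simple_{rat}$); likewise for $\lbar\beta$. So a priori there are four gross cases according to whether each of $\alpha,\beta$ is $\tauo$-fixed or $\tauo$-rational, and within each case one must enumerate the possible bonds between the (at most four) upstairs vertices. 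First I would dispose of the case $\alpha,\beta\in\Simple^\tauo$: here $\Reflection{\lbar\alpha},\Reflection{\lbar\beta}$ are the restrictions of $\reflection\alpha,\reflection\beta$, and since $\tauo$ acts trivially on $\Utau$ in the relevant plane, $m(\Reflection{\lbar\alpha},\Reflection{\lbar\beta})=m(\reflection\alpha,\reflection\beta)$, which in a simply-laced diagram is $2$ or $3$; the hypothesis $m\ge 3$ forces $m=3$ and a single bond $\alpha-\beta$, giving configuration (A). The mixed case $\alpha\in\Simple^\tauo$, $\beta\in\Simple_{rat}$ is next: the key constraint is that $\Reflection{\lbar\beta}=\reflection\beta\reflection{\tauo\beta}$ with $\beta,\tauo\beta$ non-adjacent (hence commuting, as $\beta\in\Simple_{rat}$ and $\Simple_{rat}\cap\tauo\Simple_{rat}=\emptyset$ with the sets disjoint), so $\alpha$ may be joined to neither, one, or both of $\beta,\tauo\beta$; I would compute $m(\Reflection{\lbar\alpha},\Reflection{\lbar\beta})$ in each subcase (using that $\reflection\alpha$ either commutes with the product or braids with it) and show that "$\alpha$ joined to exactly one of $\beta,\tauo\beta$" is impossible by a symmetry/consistency argument (applying the diagram automorphism $\tauo$ would force $\alpha$ joined to both), "joined to neither" gives $m=2$, and "joined to both" gives $m=4$, namely configuration (B).

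The heart of the argument is the remaining case $\alpha,\beta\in\Simple_{rat}$, with four upstairs vertices $\alpha,\tauo\alpha,\beta,\tauo\beta$. Here I would first record the structural constraints: $\alpha\not\sim\tauo\alpha$, $\beta\not\sim\tauo\beta$ (the $\reflection\alpha\reflection{\tauo\alpha}$ must make sense as a "reflection" of order $2$ in a plane, forcing commutativity and hence non-adjacency), and the bipartite-type bonds between $\{\alpha,\tauo\alpha\}$ and $\{\beta,\tauo\beta\}$ are $\tauo$-equivariant as a set — i.e. the bond pattern is invariant under simultaneously swapping $\alpha\lra\tauo\alpha$ and $\beta\lra\tauo\beta$. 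Up to this $\Z/2$-symmetry the possible bond-sets are: (i) $\alpha\sim\beta$ only together with its image $\tauo\alpha\sim\tauo\beta$ — two disjoint bonds, configuration (C); (ii) $\alpha\sim\tauo\beta$ only together with $\tauo\alpha\sim\beta$ — again two disjoint bonds but "crossed", configuration (D); (iii) three bonds, a path like $\beta-\tauo\alpha-\tauo\beta$ plus $\alpha-?$, which after applying $\tauo$ must close up to a specific $4$-vertex subdiagram; (iv) all four bonds $\alpha\sim\beta$, $\alpha\sim\tauo\beta$, $\tauo\alpha\sim\beta$, $\tauo\alpha\sim\tauo\beta$, which I would rule out because $\{\alpha,\tauo\alpha,\beta,\tauo\beta\}$ would then contain a $4$-cycle, impossible in a Dynkin diagram (which is a forest), and more directly contradicts simple-lacedness/finiteness. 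For each surviving bond-set I would then compute $m=m(\reflection\alpha\reflection{\tauo\alpha},\reflection\beta\reflection{\tauo\beta})$ by a direct calculation in $\Weyl$ (or on the Cartan matrix restricted to these four nodes, then projecting to $\Utau$): the "parallel" two-bond pattern (C) and the "crossed" two-bond pattern (D) each yield $m=3$; the three-bond pattern yields $m=5$ and, according to which of the two $\tauo$-orbits of bonds is the "extra" one, produces exactly the two diagrams (E) and (F). Along the way, configurations forced to have $m=2$ are discarded by the standing hypothesis $m\ge 3$.

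The main obstacle I anticipate is the order computation $m(\Reflection{\lbar\alpha},\Reflection{\lbar\beta})$ in the four-rational-node case, particularly distinguishing (E) from (F) and verifying that both actually occur (rather than one being an artifact). The cleanest route is probably to compute the Gram matrix of $\{\lbar\alpha,\lbar\beta\}$ under the $\tau$-form using \eqref{eq:tauformproj} together with Lemma~\ref{taurational}: writing $\lbar\alpha=\tauproj(\alpha)$ etc., one gets $\tauform{\lbar\alpha}{\lbar\alpha}$ and the cross term $\tauform{\lbar\alpha}{\lbar\beta}$ in terms of the dot products $\alpha\cdot\beta$, $\alpha\cdot\tauo\beta$, $\tauo\alpha\cdot\beta$, $\tauo\alpha\cdot\tauo\beta$ (which are $0$ or $-1$ according to the bond-set, using that the upstairs roots form an $ADE$ root system), and then $m$ is read off from $\cos^2(\pi/m)=\tauform{\lbar\alpha}{\lbar\beta}^2/(\tauform{\lbar\alpha}{\lbar\alpha}\,\tauform{\lbar\beta}{\lbar\beta})$. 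One must keep careful track of $c_1$ (equivalently $\tau+\sigma$) since in the non-split/Lusztig case $c_1=1$ enters the numerology and is precisely what produces the value $m=5$; in the split case $c_1=0$ and only $m\in\{2,3,4\}$ arise, consistent with (A)–(D). Finally, that each listed configuration is realized is witnessed by pointing to the known foldings ($A_{2n-1}\rightsquigarrow C_n$, $D_{n+1}\rightsquigarrow B_n$, $E_6\rightsquigarrow F_4$, $E_8\rightsquigarrow H_4$), whose Dynkin diagrams exhibit exactly the patterns (A)–(F).
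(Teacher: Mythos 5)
Your final paragraph (compute the Gram matrix of $\{\lbar\alpha,\lbar\beta\}$ in the $\tau$-form via \eqref{eq:tauformproj} and Lemma~\ref{taurational}, tracking $c_1$, and read $m$ off the angle) is essentially the paper's proof: the paper normalizes $\tauform{\alpha}{\alpha}=1$, writes $\alpha_\basel\cdot\beta_\basel=a+b\tau$, computes
$\tauform{\alpha}{\beta}=a$, $\tauform{\alpha}{\tauo\beta}=\tauform{\tauo\alpha}{\beta}=b$, $\tauform{\tauo\alpha}{\tauo\beta}=a+c_1b$,
notes that at least one of these three must vanish (otherwise the four upstairs nodes carry a $4$-cycle), uses simple-lacedness to pin each nonzero value to $-\tfrac12$, and then splits into $c_1=0$ versus $c_1\neq 0$ to read off $m$ from $-\cos\theta$.

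However, the combinatorial enumeration that you present as the main route contains a genuine error. In the case $\alpha,\beta\in\Simple_{rat}$ you assert that the bond pattern among $\{\alpha,\tauo\alpha,\beta,\tauo\beta\}$ is ``$\tauo$-equivariant as a set'' and then enumerate bond-sets up to this $\Z/2$-symmetry. But $\tauo$ is not an isometry of $\tauform{-}{-}$ unless $c_1=0$: indeed $\tauform{\tauo\alpha}{\tauo\beta}=a+c_1 b$ while $\tauform{\alpha}{\beta}=a$, so a bond $\alpha\sim\tauo\beta$ (i.e.\ $b\neq 0$) need not have a $\tauo$-image bond $\tauo\alpha\sim\beta$ matching in the way you want, and more to the point $\alpha\sim\beta$ and $\tauo\alpha\sim\tauo\beta$ can differ. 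In fact the four potential bonds split into two $\tauo$-orbits $\{\alpha\beta,\tauo\alpha\tauo\beta\}$ and $\{\alpha\tauo\beta,\tauo\alpha\beta\}$; a $\tauo$-equivariant bond set would have size $0$, $2$, or $4$, so your own ``case (iii): three bonds'' is incompatible with the equivariance you posit. Yet the three-bond patterns are exactly configurations (E) and (F) — they occur precisely when $c_1\neq 0$, i.e.\ in the non-split setting where $\tauo$-equivariance genuinely fails (for instance in $E_8\rightsquigarrow H_4$ one has $\alpha_3\not\sim\alpha_8$ but $\tauo\alpha_3=\alpha_5\sim\alpha_4=\tauo\alpha_8$). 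So the plan as written would miss or mishandle (E) and (F). The right invariant to use is not a diagram automorphism but the single constraint that the triple $(a,\,b,\,a+c_1b)$ cannot be entirely nonzero (no $4$-cycle), as in the paper; once you pass to the Gram-matrix computation you do mention, the argument goes through.
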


\begin{remark}
Consider the $\basek$-linear projection $p: \basel \ra \basek$ given by $\tau \mapsto 1$ and its induced map $p: U_{\basel} \ra U.$ In the following argument, we use the substitution $\tau=1$ in the split case. This correspond to replacing the $\basel$-linear eigenspace decomposition $U_{\basel}=\Utau \oplus \Usig$ of $\tauo_{\basel}$ by the $\basek$-linear eigenspace decomposition $U = p(\Utau) \oplus p(\Usig)$ of $\tauo.$ See \cite[Remark~2.13]{LZ18}.
\end{remark}

\begin{proof}
We may assume, without loss of generality, $\tauform{\alpha}{\alpha}=1$ for all $\alpha \in \Simple.$
We write $\alpha_{\basel} \cdot \beta_{\basel} = a +b\tau$ for some $a, b \in \basek.$ Let us recall the following formulas from \eqref{eq:tauform} and \eqref{eq:tauformproj}:
\begin{equation*}
\tauform{\alpha}{\beta} = \pr_{\tau}(\alpha_{\basel} \cdot \beta_{\basel}), \quad
\tauform{\lbar{\alpha}}{\lbar{\beta}} = \tfrac{\sigma^2 +1}{D}(\alpha_{\basel} \cdot \beta_{\basel}).
\end{equation*}
Since $m \geq 3,$ we have $\tauform{\lbar{\alpha}}{\lbar{\beta}} \neq 0,$ and hence $\alpha_{\basel} \cdot \beta_{\basel} \neq 0.$ Hence either $a \neq 0$ or $b \neq 0$ (or both). We know $\tauform{\alpha}{\tauo \alpha}=0,$ $\tauform{\beta}{\tauo\beta}=0$ (Lemma~\ref{taurational} (a)), and $\tauform{\alpha}{\tauo\beta}=\tauform{\tauo\alpha}{\beta}$ (the adjoint property of $\tauo$). We compute the remaining combinations below:
\begin{align*}
\tauform{\alpha}{\beta} &= a, \\
\tauform{\alpha}{\tauo\beta} &= \tauform{\tauo\alpha}{\beta}=\pr_{\tau}(\tau(a+b\tau))=\pr_{\tau}(b+(a+c_1b)\tau)=b, \\
\tauform{\tauo\alpha}{\tauo\beta}&=\pr_{\tau}(\tau^2(a+b\tau)) = \pr_{\tau}(a+c_1b + (c_1^2b+c_1a+b)\tau) = a+c_1b.
\end{align*}

\noindent
Since the original root system $\Root$ is finite crystallographic, its Coxeter diagram contains no circuit. Hence at least one of the above three values must be zero. Since $\Root$ is taken to be simply laced, $a \neq 0$ (resp. $b \neq 0$) implies $a =\tauform{\alpha}{\beta}= -\tfrac{1}{2}$ (resp. $b=\tauform{\alpha}{\tauo\beta}=-\tfrac{1}{2}$).
We also have
\begin{align*}
\alpha_{\basel} \cdot \alpha_{\basel} =
\begin{cases}
1 + \tau & \text{if} \,\, \alpha \in \Simple^{\tauo},\\
1 & \text{if} \,\, \alpha \in \Simple_{rat}. 
\end{cases}
\end{align*}
For $\alpha \in \Simple_{rat},$ this is a consequence of Lemma \ref{taurational} (b). For $\alpha \in \Simple^{\tauo},$  write $\alpha_{\basel} \cdot \alpha_{\basel} = c+d\tau$ for some $c, d \in \basek.$ Then combine $c=\tauform{\alpha}{\alpha}=\tauform{\alpha}{\tauo\alpha}=d$ and $\tauform{\alpha}{\alpha}=1.$

We denote the angle between $\lbar{\alpha}$ and $\lbar{\beta}$ by $\theta.$\\[.1cm]
\noindent
\textbf{Case~1:} $c_1 = 0$ (split case).
We have $\tauform{\alpha}{\beta}=\tauform{\tauo\alpha}{\tauo\beta}=a.$ Suppose $\alpha = \tauo\alpha.$ Then we also have $b = \tauform{\tauo\alpha}{\beta}=\tauform{\alpha}{\beta}=a,$ and hence $a \neq 0$ and $b \neq 0.$ We have
\begin{align*}
\tauform{\lbar{\alpha}}{\lbar{\beta}} &= \tfrac{1}{2}(-\tfrac{1}{2}-\tfrac{1}{2}\tau),\,\,
\tauform{\lbar{\alpha}}{\lbar{\alpha}} = \tfrac{1}{2}(1 + \tau),\\
\tauform{\lbar{\beta}}{\lbar{\beta}} &= 
\begin{cases}
\tfrac{1}{2}(1+\tau) & \text{if} \,\, \beta \in \Simple^{\tauo},\\
\tfrac{1}{2} & \text{if} \,\, \beta \in \Simple_{rat}.
\end{cases}
\end{align*}
Hence if $\beta \in \Simple^{\tauo},$ we obtain $-\cos \theta= -\tfrac{1}{2},$ yielding the diagram (A).
If $\beta \in \Simple_{rat},$ we obtain $-\cos \theta=-\tfrac{1}{\sqrt{2}},$ yielding the diagram (B).

Now suppose $\alpha, \beta \notin \Simple^{\tauo}.$ If $a=0,$ then $b \neq 0.$ We have
\begin{equation*}
\tauform{\lbar{\alpha}}{\lbar{\beta}} = \tfrac{1}{2}(-\tfrac{1}{2}\tau),\,\, 
\tauform{\lbar{\alpha}}{\lbar{\alpha}} = \tauform{\lbar{\beta}}{\lbar{\beta}}= \tfrac{1}{2}.
\end{equation*}
Hence $-\cos \theta= -\tfrac{1}{2},$ yielding the diagram (D).
If $b=0,$ then $a \neq 0.$ We have
\begin{equation*}
\tauform{\lbar{\alpha}}{\lbar{\beta}} =  \tfrac{1}{2}(-\tfrac{1}{2}),\,\,
\tauform{\lbar{\alpha}}{\lbar{\alpha}} = \tauform{\lbar{\beta}}{\lbar{\beta}}= \tfrac{1}{2}.
\end{equation*}
Hence $-\cos \theta = -\tfrac{1}{2},$ yielding the diagram (C). 
\\[.1cm]
\textbf{Case~2:} $c_1 \neq 0$ (non-split case). 
If $a =0,$ then $b \neq 0.$ The equality $\tauform{\tauo\alpha}{\tauo\beta}=-\tfrac{1}{2}c_1=-\tfrac{1}{2}$ enforces $c_1=1,$ and $\tau=\tfrac{1+\sqrt{5}}{2}$ (the golden ratio). We have
\begin{equation*}
\tauform{\lbar{\alpha}}{\lbar{\beta}} = \tfrac{\sigma^2+1}{5}(-\tfrac{1}{2}\tau),\,\,
\tauform{\lbar{\alpha}}{\lbar{\alpha}} = \tauform{\lbar{\beta}}{\lbar{\beta}} =  \tfrac{\sigma^2+1}{5}.
\end{equation*}
Hence $-\cos \theta= \tfrac{-1-\sqrt{5}}{4},$ yielding the diagram (E).  
If $b=0,$ then $a \neq 0.$ We have
\begin{equation*}
\tauform{\lbar{\alpha}}{\lbar{\beta}} =  \tfrac{\sigma^2+1}{5}(-\tfrac{1}{2}),\,\,
\tauform{\lbar{\alpha}}{\lbar{\alpha}} = \tauform{\lbar{\beta}}{\lbar{\beta}} =  \tfrac{\sigma^2+1}{5}.
\end{equation*}
Hence $-\cos \theta= -\tfrac{1}{2},$ yielding the diagram (C). 
If $a + c_1b =0,$ then both $a \neq 0$ and $b \neq 0.$ The equality $-\tfrac{1}{2}-\tfrac{1}{2}c_1=0$ enforces $c_1=-1,$ and $\tau=\tfrac{-1+\sqrt{5}}{2}.$ We have
\begin{equation*}
\tauform{\lbar{\alpha}}{\lbar{\beta}} = \tfrac{\sigma^2+1}{5}(-\tfrac{1}{2}-\tfrac{1}{2}\tau),\,\,
\tauform{\lbar{\alpha}}{\lbar{\alpha}} = \tauform{\lbar{\beta}}{\lbar{\beta}} = \tfrac{\sigma^2+1}{5}.
\end{equation*}
Hence $-\cos \theta = \tfrac{-1-\sqrt{5}}{4},$ yielding the diagram (F).
\end{proof}

\subsection{Folding diagrams} \label{app:folding}

\setlength{\intextsep}{5pt plus 1.0pt minus 2.0pt}

This subsection collects the Coxeter diagrams of all known examples of twisted quadratic foldings (cf. \cite[Examples~4.3--4.5]{LZ18}). The number at each vertex is the index of each simple reflection that the vertex represents. In the Coxeter diagram associated with the original root system (drawn first in each folding), the simple reflections represented by black dots correspond to the elements of $\Simple_{rat},$ those represented by white dots correspond to the elements of $\tauo(\Simple_{rat}),$ and those represented by double white dots correspond to the elements of $\Simple^{\tauo}$ respectively.

\begin{center}
\begin{tikzpicture}[scale=0.6]
\node[draw] at (-1.5,0) {$A_{2n-1}$};
\filldraw[black] (0,0) circle (3pt) node[anchor=south] {$1$};
\filldraw[black] (1,0) circle (3pt) node[anchor=south] {$2$};
\filldraw[black] (2,0) circle (3pt) node[anchor=south] {$3$};
\filldraw[black] (5,0) circle (3pt) node[anchor=south] {$n-1$};
\draw [double, double distance=2pt] (6,-0.5) circle (3pt) node[anchor=west] {$n$};

\draw[black] (5,-1) circle (3pt) node[anchor=north] {$n+1$};
\draw[black] (2,-1) circle (3pt) node[anchor=north] {};
\draw[black] (1,-1) circle (3pt) node[anchor=north] {};
\draw[black] (0,-1) circle (3pt) node[anchor=north] {$2n-1$};
\draw (0,0) -- (1,0);
\draw (1,0) -- (2,0);
\draw (2,0) -- (3,0);
\draw[dashed] (3,0) -- (4,0);
\draw (4,0) -- (5,0);
\draw (5,0) -- (6, -0.5);
\draw (6,-0.5) -- (5, -1);
\draw (5,-1) -- (4, -1);
\draw[dashed] (4,-1) -- (3,-1);
\draw (3,-1) -- (2, -1);
\draw (2,-1) -- (1, -1);
\draw (1,-1) -- (0, -1);

\node[draw] at (-1.5,-3) {$C_n$};
\filldraw[black] (0,-3) circle (3pt) node[anchor=south] {$1$};
\filldraw[black] (1,-3) circle (3pt) node[anchor=south] {$2$};
\filldraw[black] (2,-3) circle (3pt) node[anchor=south] {$3$};
\filldraw[black] (5,-3) circle (3pt) node[anchor=south] {$n-1$};
\filldraw[black] (6,-3) circle (3pt) node[anchor=south] {$n$};
\draw (0,-3) -- (1,-3);
\draw (1,-3) -- (2,-3);
\draw (2,-3) -- (3,-3);
\draw[dashed] (3,-3) -- (4,-3);
\draw (4,-3) -- (5,-3);
\draw (5,-3) -- (6,-3);
\draw (5.5,-3) node[anchor=north] {$4$};
\end{tikzpicture}
\begin{tikzpicture}[scale=0.6]
\node[draw] at (-1.5,0) {$D_{n+1}$};
\draw [double, double distance=2pt] (0,0) circle (3pt) node[anchor=south] {$1$};
\draw [double, double distance=2pt] (1,0) circle (3pt) node[anchor=south] {$2$};
\draw [double, double distance=2pt] (2,0) circle (3pt) node[anchor=south] {$3$};
\draw [double, double distance=2pt] (5,0) circle (3pt) node[anchor=south] {$n-1$};
\filldraw[black] (6,-0.5) circle (3pt) node[anchor=west] {$n+1$};
\draw[black] (6,0.5) circle (3pt) node[anchor=west] {$n$};

\draw (0,0) -- (1,0);
\draw (1,0) -- (2,0);
\draw (2,0) -- (3,0);
\draw[dashed] (3,0) -- (4,0);
\draw (4,0) -- (5,0);
\draw (5,0) -- (6, -0.5);
\draw (5,0) -- (6,0.5);

\node[draw] at (-1.5,-3) {$B_n$};
\filldraw[black] (0,-3) circle (3pt) node[anchor=south] {$1$};
\filldraw[black] (1,-3) circle (3pt) node[anchor=south] {$2$};
\filldraw[black] (2,-3) circle (3pt) node[anchor=south] {$3$};
\filldraw[black] (5,-3) circle (3pt) node[anchor=south] {$n-1$};
\filldraw[black] (6,-3) circle (3pt) node[anchor=south] {$n$};
\draw (0,-3) -- (1,-3);
\draw (1,-3) -- (2,-3);
\draw (2,-3) -- (3,-3);
\draw[dashed] (3,-3) -- (4,-3);
\draw (4,-3) -- (5,-3);
\draw (5,-3) -- (6,-3);
\draw (5.5,-3) node[anchor=north] {$4$};
\end{tikzpicture}
\begin{tikzpicture}[scale=0.6]
\node[draw] at (-1,0)  {$E_6$};
\draw [double, double distance=2pt] (0,0) circle (3pt) node[anchor=south] {$2$};
\draw [double, double distance=2pt] (1,0) circle (3pt) node[anchor=south] {$4$};

\filldraw[black] (2,0.5) circle (3pt) node[anchor=south] {$3$};
\filldraw[black] (3,1) circle (3pt) node[anchor=south] {$1$};
\draw[black] (2,-0.5) circle (3pt) node[anchor=north] {$5$};
\draw[black] (3,-1) circle (3pt) node[anchor=north] {$6$};

\draw (0,0) -- (1,0);
\draw (1,0) -- (2,0.5);
\draw (2,0.5) -- (3,1);
\draw (1,0) -- (2,-0.5);
\draw (2,-0.5) -- (3, -1);

\node[draw] at (-1,-3) {$F_4$};
\filldraw[black] (0,-3) circle (3pt) node[anchor=south] {$1$};
\filldraw[black] (1,-3) circle (3pt) node[anchor=south] {$2$};
\filldraw[black] (2,-3) circle (3pt) node[anchor=south] {$3$};
\filldraw[black] (3,-3) circle (3pt) node[anchor=south] {$4$};
\draw (0,-3) -- (1,-3);
\draw (1,-3) -- (2,-3);
\draw (2,-3) -- (3,-3);

\draw (1.5,-3) node[anchor=north] {$4$};
\end{tikzpicture}
\end{center}
\begin{center}
\begin{tikzpicture}[scale=0.6]
\node[draw] at (-1,0) {$E_8$};
\filldraw[black] (0,0) circle (3pt) node[anchor=south] {$1$};
\filldraw[black] (1,0) circle (3pt) node[anchor=south] {$2$};
\filldraw[black] (2,0) circle (3pt) node[anchor=south] {$3$};
\draw[black] (3,0) circle (3pt) node[anchor=south] {$4$};

\filldraw[black] (3,-1) circle (3pt) node[anchor=north] {$8$};
\draw[black] (2,-1) circle (3pt) node[anchor=north] {$5$};
\draw[black] (1,-1) circle (3pt) node[anchor=north] {$6$};
\draw[black] (0,-1) circle (3pt) node[anchor=north] {$7$};

\draw (0,0) -- (1,0);
\draw (1,0) -- (2,0);
\draw (2,0) -- (3,0);
\draw (3,0) -- (2,-1);

\draw (3,-1) -- (2, -1);
\draw (2,-1) -- (1, -1);
\draw (1,-1) -- (0, -1);

\node[draw] at (-1,-3) {$H_4$};
\filldraw[black] (0,-3) circle (3pt) node[anchor=south] {$1$};
\filldraw[black] (1,-3) circle (3pt) node[anchor=south] {$2$};
\filldraw[black] (2,-3) circle (3pt) node[anchor=south] {$3$};
\filldraw[black] (3,-3) circle (3pt) node[anchor=south] {$4$};
\draw (0,-3) -- (1,-3);
\draw (1,-3) -- (2,-3);
\draw (2,-3) -- (3,-3);

\draw (2.5,-3) node[anchor=north] {$5$};
\end{tikzpicture}
\quad \quad
\begin{tikzpicture}[scale=0.6]
\node[draw] at (0,0) {$D_6$};
\filldraw[black] (1,0) circle (3pt) node[anchor=south] {$1$};
\filldraw[black] (2,0) circle (3pt) node[anchor=south] {$2$};
\draw[black] (3,0) circle (3pt) node[anchor=south] {$3$};

\filldraw[black] (3,-1) circle (3pt) node[anchor=north] {$6$};
\draw[black] (2,-1) circle (3pt) node[anchor=north] {$4$};
\draw[black] (1,-1) circle (3pt) node[anchor=north] {$5$};

\draw (1,0) -- (2,0);
\draw (2,0) -- (3,0);
\draw (3,0) -- (2,-1);

\draw (3,-1) -- (2, -1);
\draw (2,-1) -- (1, -1);

\node[draw] at (0,-3) {$H_3$};
\filldraw[black] (1,-3) circle (3pt) node[anchor=south] {$1$};
\filldraw[black] (2,-3) circle (3pt) node[anchor=south] {$2$};
\filldraw[black] (3,-3) circle (3pt) node[anchor=south] {$3$};
\draw (1,-3) -- (2,-3);
\draw (2,-3) -- (3,-3);

\draw (2.5,-3) node[anchor=north] {$5$};
\end{tikzpicture}
\quad \quad
\begin{tikzpicture}[scale=0.6]
\node[draw] at (1,0) {$A_4$};
\filldraw[black] (2,0) circle (3pt) node[anchor=south] {$1$};
\draw[black] (3,0) circle (3pt) node[anchor=south] {$2$};

\filldraw[black] (3,-1) circle (3pt) node[anchor=north] {$4$};
\draw[black] (2,-1) circle (3pt) node[anchor=north] {$3$};

\draw (2,0) -- (3,0);
\draw (3,0) -- (2,-1);

\draw (3,-1) -- (2, -1);

\node[draw] at (1,-3) {$H_2$};
\filldraw[black] (2,-3) circle (3pt) node[anchor=south] {$1$};
\filldraw[black] (3,-3) circle (3pt) node[anchor=south] {$2$};

\draw (2,-3) -- (3,-3);
\draw (2.5,-3) node[anchor=north] {$5$};
\end{tikzpicture}
\end{center}



\section{Combinatorial folding map} \label{sec:comfolding}
Let $\Root \rightsquigarrow \Root_{\tau}$ be a twisted quadratic folding with $\Root$ simply laced. 
Let $(\Weyl, S)$ denote the Coxeter system associated with the $\basek$-reflection group $\Weyl$ of $\Root$ where $S=\{ \reflection{\alpha} \mid \alpha \in \Simple \},$ and let $(\TWeyl,S_{\tau})$ denote the Coxeter system associated with the $\basel$-reflection group $\TWeyl$ where $S_{\tau}=\{\Reflection{\beta} \mid \beta \in \Simple_{\tau}\}.$ We will use $\tlength{u}$ to denote the length of $u \in \TWeyl.$

We study liftings of Schubert classes (Definition~\ref{def:schubertlifting}) by treating the folding $\Root \rightsquigarrow \Root_{\tau}$ on the level of the Coxeter generating sets $S$ and $S_{\tau}.$ We define a set-theoretic map $\adm: S \ra S_{\tau}$ by 
\begin{equation} \label{eq:adm}
\adm: S \ra S_{\tau}: \reflection{\alpha},  \reflection{\tauo \alpha} \mapsto \Reflection{\lbar{\alpha}}
\end{equation}
for each $\alpha \in \Simple^{\tauo} \sqcup \Simple_{rat}.$ 
Note that $\adm$ is surjective due to the surjectivity of $\tauproj|_{\Simple}: \Simple \ra \Simple_{\tau}.$

\begin{definition} \label{def:opposite}
Let  $R \in S_{\tau}$ be such that $\adm\inv(R)$ has exactly two elements. If $s$ is one of them, the other one is denoted by $s^*$ and called the \emph{opposite simple reflection} of $s.$
\end{definition}

The following is the rephrasing of Lemma~\ref{prop:foldinglocal1} in terms of $\adm.$

\begin{lemma} \label{prop:philocal}
Let $R, R' \in S_{\tau}$ such that $m(R, R') \geq 3.$ The possible configuration of the full subgraph $\adm\inv(\{R, R'\})$ of the Coxeter diagram of $(\Weyl, S)$ is one of the following, up to symmetry of $R$ and $R'.$ 
\begin{center}
\begin{tikzpicture}[scale=0.5]
\draw (2.5,1) node[anchor=south] {(A)};
\filldraw[black] (2,-0.5) circle (4pt) node[anchor=south] {$s$};
\filldraw[black] (3,-0.5) circle (4pt) node[anchor=south] {$t$};

\draw (2,-0.5) -- (3,-0.5);

\filldraw[black] (2,-3) circle (4pt) node[anchor=south] {$R$};
\filldraw[black] (3,-3) circle (4pt) node[anchor=south] {$R'$};

\draw (2,-3) -- (3,-3);
\draw (2.5,-3) node[anchor=north] {$3$};
\end{tikzpicture}
\begin{tikzpicture}[scale=0.5]
\draw (2.5,1) node[anchor=south] {(B)};
\filldraw[black] (2,0) circle (4pt) node[anchor=south] {$s$};
\filldraw[black] (3,0) circle (4pt) node[anchor=south] {$t$};
\filldraw[black] (3,-1) circle (4pt) node[anchor=north] {$t^*$};
\filldraw[black] (2,-1) circle (4pt) node[anchor=north] {$s^*$};

\draw (2,0) -- (3,0);
\draw (3,-1) -- (2, -1);

\filldraw[black] (2,-3) circle (4pt) node[anchor=south] {$R$};
\filldraw[black] (3,-3) circle (4pt) node[anchor=south] {$R'$};

\draw (2,-3) -- (3,-3);
\draw (2.5,-3) node[anchor=north] {$3$};
\end{tikzpicture}
\begin{tikzpicture}[scale=0.5]
\draw (1.5,1) node[anchor=south] {(C)};
\filldraw (1,-0.5) circle (4pt) node[anchor=south] {$s$};
\filldraw[black] (2,0) circle (4pt) node[anchor=south] {$t$};
\filldraw[black] (2,-1) circle (4pt) node[anchor=north] {$t^*$};

\draw (1,-0.5) -- (2,0);
\draw (1,-0.5) -- (2,-1);

\filldraw[black] (1,-3) circle (4pt) node[anchor=south] {$R$};
\filldraw[black] (2,-3) circle (4pt) node[anchor=south] {$R'$};

\draw (1,-3) -- (2,-3);

\draw (1.5,-3) node[anchor=north] {$4$};
\end{tikzpicture}
\begin{tikzpicture}[scale=0.5]
\draw (2.5,1) node[anchor=south] {(D)};
\filldraw[black] (2,0) circle (4pt) node[anchor=south] {$s$};
\filldraw[black] (3,0) circle (4pt) node[anchor=south] {$t$};
\filldraw[black] (3,-1) circle (4pt) node[anchor=north] {$t^*$};
\filldraw[black] (2,-1) circle (4pt) node[anchor=north] {$s^*$};

\draw (2,0) -- (3,0);
\draw (3,0) -- (2,-1);
\draw (3,-1) -- (2, -1);

\filldraw[black] (2,-3) circle (4pt) node[anchor=south] {$R$};
\filldraw[black] (3,-3) circle (4pt) node[anchor=south] {$R'$};

\draw (2,-3) -- (3,-3);
\draw (2.5,-3) node[anchor=north] {$5$};

\node (A) at (5, -0.5) {};
\node (B) at (5, -2.5) {};
\draw [->, line width=0.5mm] (A) -- (B);
\draw (5,-1.5) node[anchor=west] {$\adm$};
\end{tikzpicture}
\end{center}
\end{lemma}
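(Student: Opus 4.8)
The statement is a translation of Lemma~\ref{prop:foldinglocal1} into the language of the combinatorial map $\adm$, and the plan is to carry this out in two steps: first identify $\adm\inv(\{R,R'\})$ with the full subgraph classified in Lemma~\ref{prop:foldinglocal1}, and then check that the six diagrams (A)--(F) there collapse onto the four diagrams (A)--(D) claimed here once one forgets the distinction between a simple root $\gamma$ and its $\tauo$-image.

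For the first step I would fix notation as follows. Each $R \in S_{\tau}$ is of the form $\Reflection{\lbar{\alpha}}$ for a unique $\alpha \in \Simple^{\tauo} \sqcup \Simple_{rat}$ with $\lbar{\alpha} = \tauproj(\alpha)$, because $\tauproj$ is injective and $\Simple_{\tau} = \tauproj(\Simple^{\tauo} \sqcup \Simple_{rat})$; so write $R = \Reflection{\lbar{\alpha}}$ and $R' = \Reflection{\lbar{\beta}}$. By the definition~\eqref{eq:adm}, a simple reflection $\reflection{\gamma}$ lies in $\adm\inv(\{R,R'\})$ exactly when $\gamma \in \{\alpha, \tauo\alpha, \beta, \tauo\beta\}$, with the convention $\tauo\alpha = \alpha$ when $\alpha \in \Simple^{\tauo}$ and likewise for $\beta$. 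Hence, as a full subgraph of the Coxeter diagram of $(\Weyl,S)$, we have $\adm\inv(\{R,R'\}) = (\tauproj|_{\Simple})\inv(\{\lbar{\alpha},\lbar{\beta}\})$, and since by hypothesis $m(\Reflection{\lbar{\alpha}},\Reflection{\lbar{\beta}}) = m(R,R') \geq 3$, Lemma~\ref{prop:foldinglocal1} applies and describes this subgraph, with its full set of edges, as one of (A)--(F) there, the accompanying edge between $\lbar{\alpha}$ and $\lbar{\beta}$ being labelled $m(R,R') \in \{3,4,5\}$.

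For the second step I would read off the matching, the only freedom used being the symmetry $R \leftrightarrow R'$ already allowed and the fact that, since $\adm$ sends $\reflection{\gamma}$ and $\reflection{\tauo\gamma}$ to the same element of $S_{\tau}$, interchanging $\beta$ with $\tauo\beta$ (resp.\ $\alpha$ with $\tauo\alpha$) leaves $\adm\inv(\{R,R'\})$ unchanged as a labelled graph while swapping the roles of $t$ and $t^*$ (resp.\ of $s$ and $s^*$). Under this: diagram~(A) of Lemma~\ref{prop:foldinglocal1} (both $\adm$-fibres singletons, $m=3$) is diagram~(A) here; diagram~(B) (one fibre a singleton, $m=4$) is diagram~(C) here; diagrams~(C) and~(D) (both fibres of size two, $m=3$) are both diagram~(B) here, the ``crossed'' pair of edges of~(D) turning into the ``parallel'' pair of~(C) after the $t\leftrightarrow t^*$ swap; and diagrams~(E) and~(F) ($m=5$) are both diagram~(D) here, each being a four-vertex path whose successive vertices alternate between the two $\adm$-fibres, hence of the form ``$s$, $t$, $s^*$, $t^*$'' after relabelling. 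Since (A)--(F) exhaust the cases of Lemma~\ref{prop:foldinglocal1}, this completes the argument. The only point requiring care is the bookkeeping in this last step: one must check that in the four-vertex cases (C)--(F) the listed edges are the \emph{only} edges of the full subgraph --- which is guaranteed precisely because Lemma~\ref{prop:foldinglocal1} is stated for full subgraphs --- so that the ``crossed'' and ``parallel'' pictures genuinely coincide after the single permitted relabelling, and that the fibre sizes and the values $m(R,R')\in\{3,4,5\}$ line up as drawn. No computation beyond that already performed in the proof of Lemma~\ref{prop:foldinglocal1} is needed.
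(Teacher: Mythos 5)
Your argument is correct and matches the paper's own treatment: the paper gives no separate proof of Lemma~\ref{prop:philocal}, simply declaring it "the rephrasing of Lemma~\ref{prop:foldinglocal1} in terms of $\adm$," and your proposal carries out exactly that rephrasing, making explicit the identification $\adm\inv(\{R,R'\}) = \{\reflection{\gamma} : \gamma \in \{\alpha,\tauo\alpha,\beta,\tauo\beta\}\}$ and the bookkeeping by which the six configurations (A)--(F) of Lemma~\ref{prop:foldinglocal1} collapse to the four configurations (A)--(D) here after the permitted $s \leftrightarrow s^*$ and $t \leftrightarrow t^*$ relabellings. This is a faithful (and more careful) expansion of what the paper leaves implicit, not a different route.
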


\noindent
For a pair $(R, R')$ of simple reflections in $\TWeyl$ such that $m(R, R')=2,$ we have the following.

\begin{lemma} \label{prop:m2}
Let $R, R' \in S_{\tau}$ such that $m(R,R')=2.$ Let $s \in~\adm\inv(R)$ and $t \in \adm\inv(R').$ Then $m(s, t)=2.$
\end{lemma}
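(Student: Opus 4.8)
## Proof plan for Lemma~\ref{prop:m2}

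The plan is to argue by contraposition: assume $m(s,t) \geq 3$ and show this forces $m(R,R') \geq 3$. The key mechanism is the formula $\tauform{\lbar{\alpha}}{\lbar{\beta}} = \tfrac{\sigma^2+1}{D}(\alpha_{\basel}\cdot\beta_{\basel})$ from \eqref{eq:tauformproj}, which says that $\lbar{\alpha}$ and $\lbar{\beta}$ are orthogonal (equivalently $m(R,R')=2$) if and only if $\alpha_{\basel}\cdot\beta_{\basel}=0$. So I must show: if $s = \reflection{\alpha}$ and $t = \reflection{\beta}$ for simple roots $\alpha,\beta \in \Simple^{\tauo}\sqcup\Simple_{rat}$ with $\alpha$ and $\beta$ adjacent in the Coxeter diagram of $(\Weyl,S)$ (i.e. $m(s,t)\geq 3$, and since $\Root$ is simply laced this means $m(s,t)=3$), then $\alpha_{\basel}\cdot\beta_{\basel}\neq 0$.

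First I would normalize $\tauform{\alpha}{\alpha}=1$ for all $\alpha\in\Simple$ as in the proof of Lemma~\ref{prop:foldinglocal1}, and write $\alpha_{\basel}\cdot\beta_{\basel} = a + b\tau$ with $a,b\in\basek$. From that same proof I can reuse the computations $\tauform{\alpha}{\beta}=a$ and $\tauform{\alpha}{\tauo\beta}=b$. Since $\alpha$ and $\beta$ are adjacent simple roots in a simply-laced diagram, $\tauform{\alpha}{\beta} = -\tfrac{1}{2}\neq 0$, hence $a = -\tfrac{1}{2}\neq 0$, and therefore $\alpha_{\basel}\cdot\beta_{\basel} = a + b\tau \neq 0$ (here one uses that $\{1,\tau\}$ is a $\basek$-basis of $\basel$, so $a + b\tau = 0$ forces $a=0$). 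By \eqref{eq:tauformproj} this gives $\tauform{\lbar{\alpha}}{\lbar{\beta}}\neq 0$, i.e. $m(\Reflection{\lbar{\alpha}},\Reflection{\lbar{\beta}})\geq 3$, contradicting $m(R,R')=2$.

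I expect the only genuine subtlety to be a bookkeeping one: the statement is phrased for arbitrary $s\in\adm\inv(R)$ and $t\in\adm\inv(R')$, so $s$ could equally be $\reflection{\tauo\alpha}$ rather than $\reflection{\alpha}$, and similarly for $t$. However, by the adjoint property of $\tauo$ and Lemma~\ref{taurational}(a), the four pairwise products $\tauform{\alpha}{\beta}$, $\tauform{\tauo\alpha}{\beta}$, $\tauform{\alpha}{\tauo\beta}$, $\tauform{\tauo\alpha}{\tauo\beta}$ are governed by the two scalars $a,b$ (as computed in the proof of Lemma~\ref{prop:foldinglocal1}: they equal $a$, $b$, $b$, $a+c_1b$ respectively). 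Running the argument for whichever of $\{\alpha,\tauo\alpha\}$ and $\{\beta,\tauo\beta\}$ actually represents $s$ and $t$, adjacency in the diagram pins the relevant one of these four values to $-\tfrac12$, which is again enough to conclude $a + b\tau \neq 0$ and hence $\tauform{\lbar{\alpha}}{\lbar{\beta}}\neq 0$. (Alternatively, one can observe directly that $\adm(s) = \adm(s^*)$ means $s$ and $s^*$ project to the same vertex, so whether $m(R,R')=2$ depends only on $R,R'$, not on the chosen lifts, and then it suffices to check one convenient choice.) So the main obstacle is simply making sure the case-free formulation is handled cleanly rather than any real mathematical difficulty.
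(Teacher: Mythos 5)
Your proof is correct and uses the same essential ingredients as the paper, but you run the argument by contraposition while the paper goes directly. You assume $m(s,t)\geq 3$, deduce that the relevant one of the scalars $a$, $b$, $a+c_1b$ equals $-\tfrac12 \neq 0$, conclude $\alpha_{\basel}\cdot\beta_{\basel}=a+b\tau\neq 0$ (using that $\{1,\tau\}$ is a $\basek$-basis of $\basel$), and then invoke \eqref{eq:tauformproj} to get $\tauform{\lbar\alpha}{\lbar\beta}\neq 0$, i.e.\ $m(R,R')\geq 3$. The paper instead starts from $m(R,R')=2$, uses \eqref{eq:tauformproj} to conclude $\alpha_{\basel}\cdot\beta_{\basel}=0$, and then simply observes that each of the four $\tau$-form values is $\pr_{\tau}$ applied to a $\basel$-multiple of $\alpha_{\basel}\cdot\beta_{\basel}$, hence zero — so $m(s,t)=2$ for all four choices at once. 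The direct route is slightly more economical because it needs none of the explicit expressions $a$, $b$, $a+c_1b$ from the proof of Lemma~\ref{prop:foldinglocal1}; your contrapositive requires them in order to conclude $(a,b)\neq(0,0)$ and to do the case split over which lifts of $R$, $R'$ one actually picked. Both are sound, and you have correctly anticipated and handled the only real subtlety (the freedom in choosing $s$, $t$).
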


\begin{proof}
Let us write $R=\Reflection{\lbar{\alpha}}$ and $R'=\Reflection{\lbar{\beta}}$ for some $\alpha, \beta \in \Simple^{\tauo} \sqcup \Simple_{rat}.$ 
The condition $m(R, R') = 2$ is equivalent to $\tauform{\lbar{\alpha}}{\lbar{\beta}}=0.$ 
By formula \eqref{eq:tauformproj}, $(\alpha_{\basel} \cdot \beta_{\basel}) = 0.$  Hence $\tauform{\alpha}{\beta} = pr_{\tau}(\alpha_{\basel} \cdot \beta_{\basel}) =0,$ $\tauform{\alpha}{\tauo\beta} =\tauform{\tauo\alpha}{\beta} =pr_{\tau}(\tau\alpha_{\basel} \cdot \beta_{\basel}) = 0,$ and $\tauform{\tauo\alpha}{\tauo\beta} = pr_{\tau}(\tau\alpha_{\basel} \cdot \tau\beta_{\basel}) =0.$
Thus for all possible choices of $s \in \adm\inv(\Reflection{\lbar{\alpha}})$ and $t \in \adm\inv(\Reflection{\lbar{\beta}}),$ we have $m(s, t)=2.$  
\end{proof}

\noindent
The map $\adm$ has the following property.

\begin{lemma} \label{prop:phi} 
Let $\reflection{i_1} \cdots \reflection{i_{\ell}}$ be a reduced expression in $\Weyl,$ and let $\Reflection{j_1} \cdots \Reflection{j_k}$ be a reduced expression in $\TWeyl.$ Suppose $\adm(\reflection{i_p}) \neq \adm(\reflection{i_{p+1}})$ for all $1 \leq p \leq \ell-1.$ Then $\reflection{i_1} \cdots \reflection{i_{\ell}}$ is a subexpression of $\embedding(\Reflection{j_1}) \cdots \embedding(\Reflection{j_k})$ if and only if $\adm(\reflection{i_1}) \cdots \adm(\reflection{i_{\ell}})$ is a subexpression of $\Reflection{j_1} \cdots \Reflection{j_k}.$
\end{lemma}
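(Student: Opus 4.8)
The plan is to reduce the statement to the ``block structure'' of the word $\embedding(\Reflection{j_1})\cdots\embedding(\Reflection{j_k})$ inside $\Weyl$. First I would write each $\Reflection{j_m}$ as $\Reflection{\lbar{\gamma_m}}$ with $\gamma_m \in \Simple^{\tauo}\sqcup\Simple_{rat}$, so that the word $\embedding(\Reflection{j_m})$ is the single letter $\reflection{\gamma_m}$ when $\gamma_m\in\Simple^{\tauo}$ and the length-two word $\reflection{\gamma_m}\reflection{\tauo\gamma_m}$ when $\gamma_m\in\Simple_{rat}$ (in the latter case $\reflection{\gamma_m}\reflection{\tauo\gamma_m}$ really has length two, since $\gamma_m$ and $\tauo\gamma_m$ are orthogonal by Lemma~\ref{taurational}(a)). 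Concatenating, $\embedding(\Reflection{j_1})\cdots\embedding(\Reflection{j_k})$ is a word in $\Weyl$ partitioned into consecutive blocks $B_1,\dots,B_k$, where $B_m$ records the letter(s) of $\embedding(\Reflection{j_m})$. The two facts I would isolate from the definition \eqref{eq:adm} of $\adm$ are: (i) every letter occurring in $B_m$ is sent to $\Reflection{j_m}$ by $\adm$; and (ii) the set of letters occurring in $B_m$ is exactly $\adm\inv(\Reflection{j_m})$. Fact (i) is immediate from \eqref{eq:adm}; fact (ii) rests in addition on the observation that $\gamma\mapsto\Reflection{\lbar\gamma}$ is injective on $\Simple^{\tauo}\sqcup\Simple_{rat}$ (a consequence of the injectivity of $\tauproj$, cf.\ Remark~\ref{rmk:tauproj}), which guarantees that $\adm\inv(\Reflection{j_m})$ contains no reflection outside $B_m$.

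For the ``only if'' direction, suppose a choice of positions exhibits $\reflection{i_1}\cdots\reflection{i_{\ell}}$ as a subexpression of $\embedding(\Reflection{j_1})\cdots\embedding(\Reflection{j_k})$, and let $m_p$ be the index of the block containing the chosen occurrence of $\reflection{i_p}$; then $m_1\le m_2\le\cdots\le m_{\ell}$ and $\adm(\reflection{i_p})=\Reflection{j_{m_p}}$ by (i). The crucial step is to upgrade this chain of inequalities to a strict one. If $m_p=m_{p+1}$ for some $p$, then the two chosen occurrences both lie in the single block $B_{m_p}$, which therefore has length two, say $B_{m_p}=\reflection{\gamma}\reflection{\tauo\gamma}$ with $\gamma\in\Simple_{rat}$; since the chosen positions strictly increase, $\reflection{i_p}=\reflection{\gamma}$ and $\reflection{i_{p+1}}=\reflection{\tauo\gamma}$, so $\adm(\reflection{i_p})=\Reflection{\lbar\gamma}=\adm(\reflection{i_{p+1}})$, contradicting the hypothesis. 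Hence $m_1<\cdots<m_{\ell}$, and then $\adm(\reflection{i_1})\cdots\adm(\reflection{i_{\ell}})=\Reflection{j_{m_1}}\cdots\Reflection{j_{m_{\ell}}}$ is a subexpression of $\Reflection{j_1}\cdots\Reflection{j_k}$.

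For the ``if'' direction, suppose there are indices $1\le m_1<\cdots<m_{\ell}\le k$ with $\adm(\reflection{i_p})=\Reflection{j_{m_p}}$ for every $p$. By (ii), $\reflection{i_p}$ is one of the letters of the block $B_{m_p}$; selecting that occurrence and using that $B_{m_1},\dots,B_{m_{\ell}}$ are distinct blocks occurring in increasing order inside $\embedding(\Reflection{j_1})\cdots\embedding(\Reflection{j_k})$, one obtains strictly increasing positions that realize $\reflection{i_1}\cdots\reflection{i_{\ell}}$ as a subexpression. I do not expect a serious obstacle; the only slightly delicate points are the bookkeeping behind (i) and (ii)---in particular ruling out extraneous reflections in $\adm\inv(\Reflection{j_m})$ via injectivity of $\tauproj$, and confirming that $\reflection{\gamma}\reflection{\tauo\gamma}$ is a genuine length-two word via Lemma~\ref{taurational}(a). (One notes in passing that reducedness of the two given expressions is not actually used in the argument, and is retained only to match the form in which the lemma is later applied.)
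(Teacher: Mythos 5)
Your proof is correct and takes essentially the same approach as the paper's: both reduce to the observation that a simple reflection $\reflection{i}$ can occur as a letter of $\embedding(\Reflection{j})$ exactly when $\adm(\reflection{i})=\Reflection{j}$, and then track the monotone block indices, using the no-repeat hypothesis on $\adm(\reflection{i_p})$ to force strict increase in the forward direction. Your version spells out the length-two block case and the injectivity of $\gamma\mapsto\Reflection{\lbar{\gamma}}$ more explicitly, and your remark that reducedness of the two words is not actually used is accurate.
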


\begin{proof}
Let $\Reflection{j} \in S_{\tau}.$ Then $\reflection{i} \in S$ is a subexpression of $\embedding(\Reflection{j})$ if and only if $\adm(\reflection{i}) = \Reflection{j}.$ This follows from the surjectivity of $\adm$ and the property $\embedding(\adm(\reflection{i})) = \reflection{i}$ (if $\alpha_i \in \Simple^{\tauo}$) or $\reflection{i} \reflection{i}^*$ (if $\alpha_i \in \Simple_{rat} \sqcup \tauo(\Simple_{rat})$).
Let $\reflection{i_1} \cdots \reflection{i_{\ell}}$ be a subexpression of $\embedding(\Reflection{j_1}) \cdots \embedding(\Reflection{j_k}).$ Then for each $1 \leq p \leq \ell,$ $\reflection{i_p}$ is a subexpression of $\embedding(\Reflection{j_{q(p)}})$ for some $1 \leq q(p) \leq k$ such that $q(p) \leq q(p+1)$ and we have $\adm(\reflection{i_p}) = \Reflection{j_{q(p)}}.$ Now the condition $\adm(\reflection{i_p}) \neq \adm(\reflection{i_{p+1}})$ for all $1 \leq p \leq \ell-1$ guarantees that $q(p) < q(p+1)$ for each $p.$ Therefore, $\adm(\reflection{i_1}) \cdots \adm(\reflection{i_{\ell}})$ is a subexpression of $\Reflection{j_1} \cdots \Reflection{j_k}.$
Conversely, let $\adm(\reflection{i_1}) \cdots \adm(\reflection{i_{\ell}})$ be a subexpression of $\Reflection{j_1} \cdots \Reflection{j_k}.$ Then for each $1 \leq p \leq \ell,$ $\adm(\reflection{i_p}) = \Reflection{j_{q(p)}}$ for some $1 \leq q(1) < \cdots < q(\ell) \leq k.$ Each $\reflection{i_p}$ is a subexpression of $\embedding(\Reflection{j_{q(p)}}).$ Therefore, $\reflection{i_1} \cdots \reflection{i_{\ell}}$ is a subexpression of $\embedding(\Reflection{j_1}) \cdots \embedding(\Reflection{j_k}).$
\end{proof}

In order to yield further properties of the map $\adm: S \ra S_{\tau}$ and the group embedding $\embedding: \TWeyl \hra \Weyl,$ we introduce the notion of a \emph{folding branch} and a \emph{collapsing part}.

\begin{definition} \label{def:foldingbranch}
Let $\cd$ and $\cd_{\tau}$ denote the Coxeter diagram of $\Weyl$ and $\TWeyl$ respectively. A \emph{folding branch} of $\cd$ with respect to $\adm: S \ra S_{\tau}$ is a full subgraph $\cd'$ of $\cd$ obtained by choosing one vertex from $\adm \inv(R)$ for each $R \in \cd_{\tau}$ such that $m(s, s')=3$ for all $s, s' \in \cd'$ with $m(\adm(s), \adm(s')) \geq 3.$ 
A \emph{collapsing part} of $\cd$ with respect to $\adm$ is the full subgraph $\adm\inv(\{R, R'\})$ of $\cd$ where $R, R' \in S_{\tau}$ and $m(R, R') \geq 4.$
\end{definition}

A folding branch exists by Lemma~\ref{prop:philocal} since for every pair $R, R' \in \cd_{\tau}$ with $m(R, R')\geq 3,$ there exists a pair $s, s' \in \cd$ such that $\adm(s)=R,$ $\adm(s')=R'$ and $m(s, s')=3.$

\begin{example}[Type~$E_8$ to $H_4$] \label{egg:foldingbranch} 
We describe folding branches and a collapsing part for the folding from type $E_8$ to $H_4$ shown in Section~\ref{app:folding}. 
We have $S = \{\reflection{1}, \cdots, \reflection{8}\}$ and $S_{\tau}=\{\Reflection{1}, \cdots, \Reflection{4}\}.$ The map $\adm: S \ra S_{\tau}$ sends $\reflection{1}, \reflection{7}$ to $\Reflection{1},$ $\reflection{2}, \reflection{6}$ to $\Reflection{2},$ $\reflection{3}, \reflection{5}$ to $\Reflection{3},$ and $\reflection{4}, \reflection{8}$ to $\Reflection{4}.$ The Coxeter diagram of the original group admits three folding branches: $\cd_1 = \{\reflection{1}, \reflection{2}, \reflection{3}, \reflection{4}\},$ $\cd_2=\{\reflection{7}, \reflection{6}, \reflection{5}, \reflection{4}\},$ and $\cd_3=\{\reflection{7}, \reflection{6}, \reflection{5}, \reflection{8}\}.$ It has a collapsing part $\cd'=\{\reflection{3}, \reflection{4}, \reflection{5}, \reflection{8}\}.$
\end{example}

\noindent
Definition~\ref{def:foldingbranch} allows us to prove the following key property of the embedding $\embedding: \TWeyl \hra \Weyl.$

\begin{proposition} \label{prop:embeddingbruhat}
If $\Reflection{j_1}\cdots\Reflection{j_{\ell}}$ is a reduced expression in $\TWeyl,$ then $\embedding(\Reflection{j_1})\cdots \embedding(\Reflection{j_{\ell}})$ is a reduced expression in $\Weyl.$
\end{proposition}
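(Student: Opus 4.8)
The plan is to reduce the statement to a claim about lengths and then induct on $\ell$. Write $v := \embedding(\Reflection{j_1})\cdots\embedding(\Reflection{j_\ell}) \in \Weyl$. Since $\embedding$ is a group homomorphism, $v = \embedding(\Reflection{j_1}\cdots\Reflection{j_\ell})$, so the word on the right is \emph{some} expression for $v$; the only thing to prove is that it is \emph{reduced}, i.e. that its length as a word, namely $\sum_{p=1}^\ell \length{\embedding(\Reflection{j_p})}$, equals $\length{v}$. Here $\length{\embedding(R)} = 1$ if the generator of $\adm\inv(R)$ lies in $\Simple^{\tauo}$, and $\length{\embedding(R)} = 2$ if $\adm\inv(R) = \{s, s^*\}$ with $s \in \Simple_{rat}$ (the two commuting reflections $\reflection{\alpha}\reflection{\tauo\alpha}$, which by Lemma~\ref{prop:m2}-type reasoning genuinely have length $2$). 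So the target is the identity $\length{v} = \sum_p \length{\embedding(\Reflection{j_p})}$.

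The inductive step is the heart of the matter. Assume $\embedding(\Reflection{j_1})\cdots\embedding(\Reflection{j_{\ell-1}})$ is reduced, representing $w := \embedding(\Reflection{j_1}\cdots\Reflection{j_{\ell-1}})$, and set $R := \Reflection{j_\ell}$. I want $\length{w\,\embedding(R)} = \length{w} + \length{\embedding(R)}$. If $\adm\inv(R) = \{s\}$ with $\reflection{\alpha} = s$, $\alpha \in \Simple^{\tauo}$, this is a single-simple-reflection statement: it suffices to show $\length{ws} > \length{w}$, equivalently $w(\alpha) \in \Root^+$ by Lemma~\ref{prop:coxeter}(a). If $\adm\inv(R) = \{s, s^*\}$ with $\embedding(R) = \reflection{\alpha}\reflection{\tauo\alpha}$, I need $w(\alpha) \in \Root^+$ \emph{and} $w\reflection{\alpha}(\tauo\alpha) \in \Root^+$; since $\reflection{\alpha}$ fixes $\tauo\alpha$ (as $(\alpha, \tauo\alpha) = 0$ by Lemma~\ref{taurational}(a), so $\reflection{\alpha}(\tauo\alpha) = \tauo\alpha$), the second condition is just $w(\tauo\alpha) \in \Root^+$. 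So in both cases the claim amounts to: for every simple root $\gamma \in \adm\inv(R)$ (viewed via $\reflection{\gamma}$), one has $w(\gamma) \in \Root^+$.

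To prove $w(\gamma) \in \Root^+$ I would argue by contradiction using Lemma~\ref{prop:embeddingproperty}, which gives $u(\tauproj(x)) = \tauproj(\embedding(u)(x))$. Apply it with $u := \Reflection{j_1}\cdots\Reflection{j_{\ell-1}}$ and $x := \gamma$, a simple root with $\tauproj(\gamma) = \lbar{\gamma} \in \Simple_\tau$ the simple root of $R$: then $\tauproj(w(\gamma)) = u(\lbar{\gamma})$. Now $\Reflection{j_1}\cdots\Reflection{j_\ell}$ reduced in $\TWeyl$ forces $\tlength{u R} > \tlength{u}$, hence $u(\lbar{\gamma}) \in \Root_\tau^+$ by Lemma~\ref{prop:coxeter}(a) applied in $\TWeyl$. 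So $\tauproj(w(\gamma))$ lies in the \emph{positive} part of $\Root_\tau$. The remaining point — and I expect this to be the main obstacle — is to transfer positivity back across $\tauproj$: one must show that if $\beta \in \Root$ has $\tauproj(\beta)$ a positive root of $\Root_\tau$, then $\beta$ is a positive root of $\Root$. This is where the \emph{folding branch} machinery of Definition~\ref{def:foldingbranch} and the local analysis of Lemma~\ref{prop:philocal} should enter: on a single folding branch $\cd'$, the restriction $\tauproj|_{\cd'}$ is (up to the scaling factor in \eqref{eq:tauformproj}) an isometry of root systems, so it carries positive roots to positive roots and back; the collapsing parts, where two simple reflections of $\TWeyl$ come from a rank-$2$ sub-configuration of $\cd$ of type $B_2$ or $H_2$, must be handled by the explicit diagrams (C), (D) of Lemma~\ref{prop:philocal}, checking directly that $\tauproj$ sends the positive roots of that rank-$2$ piece onto the positive roots of the corresponding dihedral sub-root-system of $\Root_\tau$. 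Granting this transfer lemma, $w(\gamma) \in \Root^+$ follows, the induction closes, and summing the lengths gives the reduced-word conclusion.
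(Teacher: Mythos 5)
Your inductive strategy via the length criterion $\length{w\reflection{\gamma}} > \length{w} \Leftrightarrow w(\gamma) \in \Root^+$, combined with the intertwining relation $\tauproj\big(\embedding(u)(x)\big) = u\big(\tauproj(x)\big)$ from Lemma~\ref{prop:embeddingproperty}, is a genuinely different route from the paper's. The paper proceeds purely combinatorially: it breaks $\embedding$ into two steps (choose a folding branch, then insert the opposite reflections), reduces to analysing words of the form $[\embedding(R)\embedding(R')]_m$ supported on a collapsing part, and checks by explicit braid manipulation that these are reduced. Your root-theoretic reduction to ``$w(\gamma) \in \Root^+$ for each $\gamma \in \adm\inv(R)$'' is cleaner and more conceptual, and the first three quarters of your argument are correct. (One small slip: when $\gamma = \tauo\alpha$ you get $\tauproj(\gamma) = \tau\lbar{\alpha}$ rather than $\lbar{\alpha}$, since $\tauproj \circ \tauo = \tau\cdot\tauproj$; but $\tau>0$, so this positive scalar is harmless for the positivity conclusion.)

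There is, however, a real gap in the way you propose to close the argument. You correctly identify that the remaining step is a ``transfer lemma'' saying that $\tauproj(\beta)$ positive forces $\beta$ positive, but the machinery you invoke to prove it --- restricting $\tauproj$ to a single folding branch where it is an isometry, and checking collapsing parts of type $B_2$ or $H_2$ diagram by diagram --- only controls roots $\beta$ supported on one branch or one collapsing part. The root $w(\gamma)$ you need to control is a general positive or negative root of $\Root$, typically with support across the entire diagram, so the local analysis simply does not apply to it. Fortunately the transfer lemma has a much simpler global proof that makes the branch machinery unnecessary: $\tauproj$ sends each simple root $\alpha_i \in \Simple$ to a strictly positive scalar multiple of an element of $\Simple_\tau$ (namely $\lbar{\alpha_i}$ if $\alpha_i \in \Simple^\tauo\sqcup\Simple_{rat}$, and $\tau\lbar{\alpha_j}$ if $\alpha_i = \tauo\alpha_j$), so $\tauproj(\Root^+)$ lies in the closed cone spanned by $\Simple_\tau$ and $\tauproj(\Root^-)$ in its negative; since $\tauproj$ is injective (Remark~\ref{rmk:tauproj}), $\tauproj(\beta)\neq 0$ lies in exactly one of these cones, and that determines the sign of $\beta$. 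Replacing the branch/collapsing-part analysis by this one observation would make your proof complete, and would give a shorter alternative to the paper's argument.
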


\begin{proof}
We break down the map $\embedding$ into two steps.
First, for each $1 \leq p \leq \ell,$ let us choose $\reflection{i_p} \in \adm\inv(\Reflection{j_p})$ so that $\reflection{i_1}, \cdots, \reflection{i_{\ell}}$ all belong to a single folding branch. This is possible, since $\adm$ maps each folding branch of the Coxeter diagram $\cd$ of $\Weyl$ onto the Coxeter diagram $\cd_{\tau}$ of $\TWeyl.$ 
By Lemmas~\ref{prop:philocal} and \ref{prop:m2}, for those $1 \leq p \leq \ell-1$ such that $m(\Reflection{j_p}, \Reflection{j_{p+1}}) = 2$ or $3,$ we have $m(\reflection{i_p}, \reflection{i_{p+1}}) = m(\Reflection{j_p}, \Reflection{j_{p+1}}).$ Hence the only situation where the word $\reflection{i_1}\cdots\reflection{i_{\ell}}$ is not reduced is when $\Reflection{j_1}\cdots\Reflection{j_{\ell}}$ contains a consecutive subword of the form $[RR']_{m(R,R')}$ where $m(R,R') \geq 4,$ that is, equal to either $4$ or $5$ by Lemma~\ref{prop:philocal}.

Next, for each $1 \leq p \leq \ell,$ if $|\adm\inv (\Reflection{i_p})|=2,$ then insert $\reflection{i_p}^*$ on the right of $\reflection{i_p}.$ 
When $|\adm\inv (\Reflection{i_p})|=1,$ we adopt the convention $\reflection{i_p}^*=\reflection{i_p}.$
By Lemma~\ref{prop:philocal}, $\reflection{i_1}^*, \cdots, \reflection{i_{\ell}}^*$ also belong to a single folding branch (different from the first one if $|\adm\inv (\Reflection{i_p})|=2$ for at least one $p$). In particular, the observation in the previous paragraph applies to the word $\reflection{i_1}^* \cdots \reflection{i_{\ell}}^*$ as well.  We also notice that if $\reflection{i_p}^* \neq \reflection{i_p}$ and $\reflection{i_p}^*$ does not belong to a collapsing part of $\cd,$ then it commutes with all $\reflection{i_1}, \cdots, \reflection{i_{\ell}}.$ This is because by Lemma~\ref{prop:philocal}, each $s \in \cd$ not belonging to a collapsing part is disconnected from any folding branch not containing $s.$

Based on these observations, in order to determine if the $\Weyl$-word $\embedding(\Reflection{j_1})\cdots \embedding(\Reflection{j_{\ell}})$ is reduced or not, it suffices to look at the subwords consisting of elements of a collapsing part of $\cd.$ More precisely, for $R, R' \in S_{\tau}$ with $m(R, R')\in \{4, 5\},$ we will study the word of the form $[\embedding(R)\embedding(R')]_m$ where $4 \leq m \leq m(R, R'),$ and show that it is reduced.

First, consider the case $m(R,R')=4.$ The full subgraph $\adm\inv(\{R,R'\})$ is depicted in Lemma~\ref{prop:philocal} (C). We have 
\begin{equation*}
\embedding(R)\embedding(R')\embedding(R)\embedding(R') = ss^*tss^*t  \sim s^*sts\boxed{s^*}t \sim ss^*ts^*\boxed{s}t,
\end{equation*}
where $\sim$ denotes the braid relation on $\FM{S}.$ 
Notice that in the second word to last, the second $s^*$ (boxed) is playing the role of a separator between $sts$ and $t,$ preventing the occurrence of $stst.$ In the last word, the second $s$ (boxed) is separating $s^*ts^*$ and $t,$ preventing the occurrence of $s^*ts^*t.$ One also checks (by direct manipulation of the word) that $tsts$ and $ts^*ts^*$ cannot occur. Thus the word $ss^*tss^*t$ is reduced.
Next, consider the case $m(R,R')=5.$ The full subgraph $\adm\inv(\{R,R'\})$ is depicted in Lemma~\ref{prop:philocal} (D). We have
\begin{equation*}
\embedding(R)\embedding(R')\embedding(R)\embedding(R') = ss^*tt^*ss^*tt^*  \sim s^*t^*sts\boxed{s^*}tt^*.
\end{equation*}
Observe that in the word on the right, the second $s^*$ (boxed) is separating $sts$ and $t,$ preventing the occurrence of $stst.$ By looking at all other braid equivalent words, one checks (by a similar separation phenomenon) that $tsts,$ $s^*ts^*t,$ $ts^*ts^*,$ $s^*t^*s^*t^*,$ or $t^*s^*t^*s^*$ cannot occur either. Thus the word $ss^*tt^*ss^*tt^*$ is reduced. In the same way, we see that $\embedding(R')\embedding(R)\embedding(R')\embedding(R)$ is reduced. Finally,
\begin{equation*}
\embedding(R)\embedding(R')\embedding(R)\embedding(R')\embedding(R)=ss^*tt^*ss^*tt^*ss^* \sim s^*t^*sts\boxed{s^*}tst^*s^*.
\end{equation*}
Observe that in the word on the right, once again $s^*$ (boxed) is separating $sts$ and $ts$ preventing the occurrence of either $stst$ or $ststs.$ By looking at all other braid equivalent words, one checks (by a similar separation phenomenon) that $tsts,$ $s^*ts^*t,$ $ts^*ts^*,$ $s^*t^*s^*t^*,$ $t^*s^*t^*s^*,$ $tstst,$ $s^*ts^*ts^*,$ $ts^*ts^*t,$ $s^*t^*s^*t^*s^*,$ or $t^*s^*t^*s^*t^*$ cannot occur either. Thus the word $ss^*tt^*ss^*tt^*ss^*$ is reduced.
\end{proof}

\begin{corollary}
If $u' \leq u$ in $\TWeyl,$ then $\embedding(u') \leq \embedding(u)$ in $\Weyl.$
\end{corollary}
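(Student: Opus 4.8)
The plan is to derive this corollary directly from Proposition~\ref{prop:embeddingbruhat} together with the subword characterization of the Bruhat order (Lemma~\ref{prop:coxeter}~(b)). Concretely, suppose $u' \leq u$ in $\TWeyl.$ Fix a reduced expression $u = \Reflection{j_1}\cdots\Reflection{j_{\ell}}$ in $\TWeyl.$ By Lemma~\ref{prop:coxeter}~(b) applied inside $\TWeyl,$ the element $u'$ admits a reduced expression $u' = \Reflection{j_{p_1}}\cdots\Reflection{j_{p_k}}$ which is a subexpression of $\Reflection{j_1}\cdots\Reflection{j_{\ell}},$ i.e. $1 \leq p_1 < \cdots < p_k \leq \ell.$

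Next I would apply $\embedding$ to both words. Since $\embedding$ is a group homomorphism, $\embedding(u) = \embedding(\Reflection{j_1})\cdots\embedding(\Reflection{j_{\ell}})$ and $\embedding(u') = \embedding(\Reflection{j_{p_1}})\cdots\embedding(\Reflection{j_{p_k}}).$ By Proposition~\ref{prop:embeddingbruhat}, the right-hand side of the first equation is a reduced expression for $\embedding(u)$ in $\Weyl,$ and likewise (applying Proposition~\ref{prop:embeddingbruhat} to the reduced expression $\Reflection{j_{p_1}}\cdots\Reflection{j_{p_k}}$ of $u'$) the right-hand side of the second equation is a reduced expression for $\embedding(u')$ in $\Weyl.$ Because $p_1 < \cdots < p_k,$ the word $\embedding(\Reflection{j_{p_1}})\cdots\embedding(\Reflection{j_{p_k}})$ is obtained from $\embedding(\Reflection{j_1})\cdots\embedding(\Reflection{j_{\ell}})$ by deleting the blocks $\embedding(\Reflection{j_q})$ for $q \notin \{p_1,\dots,p_k\}$; that is, it is a (not necessarily contiguous) subexpression of the $\Weyl$-reduced expression $\embedding(\Reflection{j_1})\cdots\embedding(\Reflection{j_{\ell}}).$

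Finally I would invoke Lemma~\ref{prop:coxeter}~(b) again, this time in $\Weyl$: since $\embedding(u')$ has a reduced expression that is a subexpression of the reduced expression $\embedding(\Reflection{j_1})\cdots\embedding(\Reflection{j_{\ell}})$ of $\embedding(u),$ we conclude $\embedding(u') \leq \embedding(u)$ in $\Weyl,$ as desired.

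I do not expect any serious obstacle here, as all the real work has already been done in Proposition~\ref{prop:embeddingbruhat}; the only point requiring a small amount of care is the bookkeeping that each $\embedding(\Reflection{j})$ (being the image of a single generator, hence either a single generator $\reflection{\alpha}$ or a product $\reflection{\alpha}\reflection{\tauo\alpha}$ of length two) contributes a well-defined block in the word, so that deleting whole blocks indexed by $q \notin \{p_1,\dots,p_k\}$ genuinely produces a subexpression of $\embedding(\Reflection{j_1})\cdots\embedding(\Reflection{j_{\ell}})$ in the sense of Lemma~\ref{prop:coxeter}~(b). This is immediate once one observes that both $\embedding(\Reflection{j_1})\cdots\embedding(\Reflection{j_{\ell}})$ and $\embedding(\Reflection{j_{p_1}})\cdots\embedding(\Reflection{j_{p_k}})$ are reduced by Proposition~\ref{prop:embeddingbruhat}, so no cancellation occurs when passing between them.
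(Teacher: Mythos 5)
Your proof is correct and is exactly the argument the paper leaves implicit: the corollary is stated without proof immediately after Proposition~\ref{prop:embeddingbruhat} precisely because it follows from that proposition together with the subword characterization of Bruhat order (Lemma~\ref{prop:coxeter}~(b)), which is what you carry out. The one point that genuinely requires care --- that deleting whole $\embedding(\Reflection{j_q})$-blocks from the reduced word for $\embedding(u)$ yields a subword that is itself a reduced word for $\embedding(u')$ --- you handle correctly by invoking Proposition~\ref{prop:embeddingbruhat} a second time on the reduced expression of $u'$.
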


Recall the notation $\red{\Weyl}$ from Section~\ref{ssec:coxeter}. Let $\FM{S_{\tau}}$ be the free monoid generated on the Coxeter generating set $S_{\tau}.$ 
The map $\adm: S \ra S_{\tau}$ induces a new map $\widehat{\adm}: \red{\Weyl} \ra \FM{S_{\tau}}$ as follows. Let $\mathbf{i}:=\reflection{i_1} \cdots \reflection{i_{\ell}} \in \red{\Weyl}.$ 
Then apply $\adm$ to each $\reflection{i_p},$ and whenever $\adm(\reflection{i_p})=\adm(\reflection{i_{p+1}})$ occurs, replace the word $\adm(\reflection{i_p})\adm(\reflection{i_{p+1}})$ by $\adm(\reflection{i_p}).$
We introduce one notation to facilitate our discussion.

\begin{definition}
For $w \in \Weyl,$ we define $\admred{w}$ to be the subset of $\red{w}$ consisting of reduced words $\mathbf{i}$ such that the number of \emph{adjacency of opposite simple reflections} appearing in $\mathbf{i}$ is maximal among all the elements of $\red{w}.$ We define
\begin{equation*}
\admred{\Weyl} := \bigcup_{w \in \Weyl} \admred{w}.
\end{equation*}
\end{definition}

\noindent
For example, in the folding from type $A_3$ to $C_2,$ consider $w=\reflection{2}\reflection{1}\reflection{2}\reflection{3} \in \Weyl.$ We have
\begin{equation*}
\red{w}=\{\reflection{2}\reflection{1}\reflection{2}\reflection{3}, \reflection{1}\reflection{2}[\reflection{1}\reflection{3}],  \reflection{1}\reflection{2}[\reflection{3}\reflection{1}]\}, \quad 
\admred{w}=\{\reflection{1}\reflection{2}[\reflection{1}\reflection{3}], \reflection{1}\reflection{2}[\reflection{3}\reflection{1}]\},
\end{equation*}
where the adjacent opposite simple reflections are square-bracketed. The maximal number of adjacency of opposite simple reflections is one.

The following statement about $\widehat{\adm}$ is the foundation for the proof of Proposition~\ref{prop:c3rephrase2}. 

\begin{proposition} \label{prop:phihat}
Let $w \in \Weyl$ and let $\mathbf{i}=\reflection{i_1}\cdots\reflection{i_{\ell}} \in \admred{w}.$ Then $\widehat{\adm}(\mathbf{i}) \in \red{\TWeyl}.$
\end{proposition}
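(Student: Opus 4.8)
The plan is to show that the word $\widehat{\adm}(\mathbf{i})$, obtained from $\mathbf{i}=\reflection{i_1}\cdots\reflection{i_\ell}$ by applying $\adm$ letter-by-letter and then deleting one of each adjacent pair of equal letters, is a reduced word in $\TWeyl$. Write $u := \adm(\reflection{i_1})\cdots\adm(\reflection{i_\ell})$ viewed as a product in $\TWeyl$ (with the understanding that consecutive equal letters cancel in pairs since each $\Reflection{} \in S_\tau$ is an involution); then $\widehat{\adm}(\mathbf{i})$ is literally a word spelling $u$, and the content of the claim is that this word has no further cancellation, i.e.\ its length equals $\tlength{u}$. I would prove this by induction on the number $d$ of adjacencies of opposite simple reflections appearing in $\mathbf{i}$, reducing to the case $d=0$ and then handling $d=0$ directly.

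\textbf{Base case $d=0$.} Here $\widehat{\adm}(\mathbf{i})=\adm(\reflection{i_1})\cdots\adm(\reflection{i_\ell})$ with no letters deleted, and the hypothesis that $\mathbf{i}\in\admred{w}$ forces $\adm(\reflection{i_p})\neq\adm(\reflection{i_{p+1}})$ for all $p$ (otherwise we could braid-move to create an adjacency of opposite reflections, contradicting maximality of $d=0$; this needs a small argument using Lemma~\ref{prop:philocal}, since $\adm(\reflection{i_p})=\adm(\reflection{i_{p+1}})$ means $\reflection{i_p}=\reflection{i_{p+1}}$, impossible in a reduced word, OR $\reflection{i_{p+1}}=\reflection{i_p}^*$, which is exactly an adjacency). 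So in fact $d=0$ already guarantees $\adm(\reflection{i_p})\neq\adm(\reflection{i_{p+1}})$ throughout, and no cancellation occurs when forming $\widehat{\adm}(\mathbf{i})$ at the level of words. To see the resulting word $\Reflection{j_1}\cdots\Reflection{j_\ell}$ (with $\Reflection{j_p}=\adm(\reflection{i_p})$) is reduced: suppose not; then some $\Reflection{j_1}\cdots\Reflection{j_\ell}$ fails to be reduced, so by Lemma~\ref{prop:coxeter}(b) (applied to $u$ and itself) or the strong exchange condition there is a braid-then-deletion witnessing non-reducedness. I would instead argue contrapositively through $\embedding$: by Proposition~\ref{prop:embeddingbruhat}, if $\Reflection{j_1}\cdots\Reflection{j_\ell}$ were reduced then $\embedding(\Reflection{j_1})\cdots\embedding(\Reflection{j_\ell})$ would be reduced in $\Weyl$; conversely, using Lemma~\ref{prop:phi} with the consecutive-distinctness hypothesis $\adm(\reflection{i_p})\neq\adm(\reflection{i_{p+1}})$, the reduced word $\mathbf{i}=\reflection{i_1}\cdots\reflection{i_\ell}$ being a subexpression of $\embedding(\Reflection{j_1})\cdots\embedding(\Reflection{j_\ell})$ translates into $\adm(\reflection{i_1})\cdots\adm(\reflection{i_\ell})$ being a subexpression of any reduced word for $u$; since it already has length $\tlength{u}$ if $u$ has that length, I need the complementary estimate. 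The cleanest route: $\embedding(u)=\embedding(\Reflection{j_1})\cdots\embedding(\Reflection{j_\ell})$ as a product in $\Weyl$, and this product, after inserting each opposite reflection $\reflection{i_p}^*$, has $\mathbf{i}$ as a subword (because for each $p$, $\reflection{i_p}$ is a subexpression of $\embedding(\adm(\reflection{i_p}))$); hence $\length{\embedding(u)}\geq\ell$. On the other hand $\length{\embedding(u)}\leq 2\,\tlength{u}\leq 2\ell$ is too weak, so instead I count: the inserted word $\reflection{i_1}^\ast\cdots$ interleaved gives a word of length $\le 2\ell$ for $\embedding(u)$, but the key point is that by Proposition~\ref{prop:embeddingbruhat} $\length{\embedding(u)}=\sum_p \length{\embedding(\Reflection{j_p})}$ whenever $\Reflection{j_1}\cdots\Reflection{j_\ell}$ is $\TWeyl$-reduced, and if it is \emph{not} reduced, say $\tlength{u}<\ell$, then picking a genuine $\TWeyl$-reduced word for $u$ and applying Proposition~\ref{prop:embeddingbruhat} gives $\length{\embedding(u)}<\sum_p\length{\embedding(\Reflection{j_p})}$; but $\sum_p\length{\embedding(\Reflection{j_p})}$ counts exactly the letters of $\mathbf{i}$ together with the inserted opposites, all of which lie in a pair of folding branches, and the separator analysis inside Proposition~\ref{prop:embeddingbruhat} shows $\embedding(u)$ has length equal to that count when $\mathbf{i}$ is reduced in $\Weyl$ — contradiction. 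So $d=0$ is done once this length bookkeeping is pinned down.

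\textbf{Inductive step.} Suppose $\mathbf{i}$ has $d\geq 1$ adjacencies of opposite simple reflections, and pick one, say at positions $(p,p+1)$ with $\reflection{i_{p+1}}=\reflection{i_p}^*$. Form $\mathbf{i}'$ from $\mathbf{i}$ by deleting both letters $\reflection{i_p},\reflection{i_{p+1}}$; this represents some $w'\in\Weyl$ with $\length{w'}=\ell-2$, and I claim $\mathbf{i}'\in\admred{w'}$, i.e.\ it still maximizes adjacencies of opposites among reduced words for $w'$ (if a word for $w'$ had more, re-inserting $\reflection{i_p}\reflection{i_p}^*$ — which is a braid-neutral insertion since $\reflection{i_p}$ and $\reflection{i_p}^*$ commute with each other and with everything outside the relevant collapsing part, by Lemma~\ref{prop:philocal} — would yield a word for $w$ with $>d$ such adjacencies, contradiction; this is the step that needs the commuting facts about $s, s^\ast$ from the proof of Proposition~\ref{prop:embeddingbruhat}). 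By induction $\widehat{\adm}(\mathbf{i}')\in\red{\TWeyl}$, spelling some $u'\in\TWeyl$ with $\tlength{u'}=\ell-2-(d-1)=\ell-d-1$. Now $\widehat{\adm}(\mathbf{i})$ is obtained from $\widehat{\adm}(\mathbf{i}')$ by re-inserting a single letter $\adm(\reflection{i_p})=\Reflection{}$ at the corresponding spot (the pair $\reflection{i_p}\reflection{i_p}^*$ collapses under $\widehat{\adm}$ to one letter $\Reflection{}$). So $u = u' \cdot (\text{conjugate of }\Reflection{})$ in the appropriate sense, with $\tlength{u}\in\{\tlength{u'}\pm 1\}$, and I must rule out $\tlength{u}=\tlength{u'}-1$, i.e.\ that inserting this letter causes a cancellation. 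If it did, then $u=u'$-with-one-fewer-letter would have length $\ell-d-2$, and I would pull this back through $\embedding$ and the inserted-opposite trick exactly as in the base case to contradict the assumption that $\mathbf{i}$ is $\Weyl$-reduced.

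\textbf{Main obstacle.} The hard part is the length bookkeeping that ties a hypothetical cancellation in $\TWeyl$ back to non-reducedness of $\mathbf{i}$ in $\Weyl$ — essentially, proving a converse to Proposition~\ref{prop:embeddingbruhat} along the image of a single folding branch. Concretely: if $\Reflection{j_1}\cdots\Reflection{j_\ell}$ with consecutive letters distinct is \emph{not} $\TWeyl$-reduced, I need that the word $\reflection{i_1}\cdots\reflection{i_\ell}$ (chosen in one folding branch) is not $\Weyl$-reduced. The natural tool is the Exchange Condition on $\TWeyl$ together with Lemma~\ref{prop:phi}: a non-reduced word admits, after braid moves realizable in $\TWeyl$, a deletion of two letters; Lemma~\ref{prop:philocal}/\ref{prop:m2} let me lift those $\TWeyl$-braid moves (types $3$ and $2$) to $\Weyl$-braid moves within the folding branch, and the $m\ge 4$ collapsing cases are exactly the ones analyzed by the separator argument in Proposition~\ref{prop:embeddingbruhat}, which shows no "short" relations are created there. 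Getting this lifting of braid moves and exchanges fully rigorous — in particular checking that a $\TWeyl$-exchange never arises \emph{solely} from a collapsing part in a way that has no $\Weyl$-shadow — is where the real work lies; everything else is the two-step induction above.
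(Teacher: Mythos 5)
Your base case ($d=0$) has a genuine gap. From $\mathbf{i}\in\admred{w}$ you extract only that $\adm(\reflection{i_p})\neq\adm(\reflection{i_{p+1}})$ for all $p$, and the entire embedding/length-bookkeeping argument that follows uses only this. That is not enough. In the folding $A_3\rightsquigarrow C_2$, where $\adm(\reflection{1})=\adm(\reflection{3})=\Reflection{1}$ and $\adm(\reflection{2})=\Reflection{2}$ with $m(\Reflection{1},\Reflection{2})=4$, the word $\mathbf{i}=\reflection{2}\reflection{1}\reflection{2}\reflection{3}\reflection{2}$ is $\Weyl$-reduced, has $d=0$, and has distinct consecutive $\adm$-images; yet $\widehat{\adm}(\mathbf{i})=\Reflection{2}\Reflection{1}\Reflection{2}\Reflection{1}\Reflection{2}$ is not $C_2$-reduced. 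The proposition is rescued only because $\mathbf{i}\notin\admred{w}$: the braid-equivalent word $\reflection{1}\reflection{2}\reflection{3}\reflection{1}\reflection{2}$ for the same $w$ has an opposite adjacency $\reflection{3}\reflection{1}$. Since your argument never compares $\mathbf{i}$ with the other reduced words of $w$, it would apply verbatim to this $\mathbf{i}$ and produce a false conclusion. This shows that the ``converse to Proposition~\ref{prop:embeddingbruhat}'' you single out as the main obstacle is in fact false without the maximality hypothesis, and cannot be recovered by sharper counting through $\embedding$. The inductive step has a secondary problem: deleting $\reflection{i_p}\reflection{i_p}^*$ from the \emph{middle} of $\mathbf{i}$ and ``re-inserting'' it into an arbitrary $\mathbf{j}'\in\red{w'}$ is not a braid-neutral move, since $\reflection{i_p}$ and $\reflection{i_p}^*$ do not commute with their Coxeter-diagram neighbors, which certainly appear in $\mathbf{j}'$; so the claim $\mathbf{i}'\in\admred{w'}$ is unjustified.

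By contrast, the paper first proves Claim~\ref{cl:adjacency} --- truncating $\mathbf{i}$ from the \emph{right} end preserves membership in $\admred{\cdot}$, which works because right-truncation is length-additive, so a better word for the truncation would concatenate with the removed suffix to give a reduced word for $w$ with strictly more opposite adjacencies --- and then reduces to excluding a consecutive pattern $[RR']_m$ with $m>m(R,R')$ in $\widehat{\adm}(\mathbf{i})$. That exclusion is carried out by listing, for each of the four local shapes (A)--(D) of Lemma~\ref{prop:philocal}, every word in $\admred{\Weyl}$ mapping to $[RR']_{m(R,R')}$ and checking by direct word manipulation that appending one more letter yields either a non-reduced word or a reduced word no longer in $\admred{\Weyl}$. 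This explicit case analysis is exactly where the maximality hypothesis does its work, and it does not appear to admit a shortcut through $\embedding$ of the kind you propose.
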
 

\begin{proof}
First, we prove the following claim:
\begin{claim} \label{cl:adjacency}
Let $w$ and $\mathbf{i}$ be as in the statement. Let $\mathbf{j} := \widehat{\adm}(\mathbf{i}) = \Reflection{j_1}\cdots\Reflection{j_k}$ where $1 \leq k \leq \ell$ and each $\Reflection{j_q}$ denotes a simple reflection in $\TWeyl.$ Then for $\mathbf{j'} := \Reflection{j_1}\cdots\Reflection{j_{k-1}},$ there exists $w' \in \Weyl$ and $\mathbf{i'} \in \admred{w'}$ and $\widehat{\adm}(\mathbf{i'}) = \mathbf{j'}.$
\end{claim}

There are $1=a_1 < \cdots < a_k \leq \ell$ such that $\adm(\reflection{i_{a_p}}) = \Reflection{j_p}$ and $\adm(\reflection{i_{a_p}}) \neq \adm(\reflection{i_{a_p+1}})$ for each $1 \leq p \leq k.$ Take $w'=\reflection{i_1}\reflection{i_2}\cdots \reflection{i_{a_{k-1}}}.$  The word $\mathbf{i'}:=\reflection{i_1}\reflection{i_2}\cdots \reflection{i_{a_{k-1}}}$ belongs to $\red{w'}$ since it is a consecutive subword of the reduced word $\mathbf{i}.$ Moreover, since $\mathbf{i'}$ is obtained from $\mathbf{i} \in \admred{w}$ by removing either $\reflection{i_{a_k}}$ or the word $\reflection{i_{a_k}}^*\reflection{i_{a_k}}$ from the right end, $\mathbf{i'} \in \admred{w'}$ must be the case (otherwise, $\mathbf{i} \notin \admred{w}$). This proves the claim.

Since by construction we have $\Reflection{j_q} \neq \Reflection{j_{q+1}}$ for all $1 \leq q \leq k-1,$ in order for the sequence $\Reflection{j_1} \cdots \Reflection{j_k}$ not to be reduced, the only possibility is that it contains a pattern $[RR']_m$ where $R, R' \in S_{\tau}$ and $m > m(R, R').$ We argue that this situation cannot occur.

Let us consider all possible patterns of the $\adm$-preimage of each pair of simple reflections $R, R' \in S_{\tau}$ such that $m(R, R') \geq 3.$ There are four distinct patterns as in Lemma~\ref{prop:philocal}.
In (A), there is nothing to prove. 
In (B),  let us list all words $\mathbf{i} \in \admred{\Weyl}$ such that $\widehat{\adm}(\mathbf{i})=RR'R.$ Up to symmetry between $s$ and $s^*$ (resp. $t$ and $t^*$), they are:
\begin{equation*}
 [ss^*][tt^*][ss^*], \quad 
 [ss^*][tt^*]s, \quad
 s[tt^*][ss^*], \quad
 s[tt^*]s, \quad
 s[tt^*]s^*, \quad
 sts.
\end{equation*}
where a pair of opposite simple reflections are enclosed by brackets.
Direct computation shows that adding $t$ or $t^*$ to the right of each of the above reduced word will result in either a non-reduced word or a reduced word  not in $\admred{\Weyl}.$ We give two examples to describe these points: 
\begin{equation*}
 [ss^*][tt^*][ss^*]t \sim \underline{stst}s^*t^*s^*, \quad \quad
 stst^*  \sim s[tt^*]s.
\end{equation*}
The first shows a case where the word is not reduced (see the underlined part), and the second shows a case where the word is reduced but not in $\admred{\Weyl}.$

In (C),  let us list all words $\mathbf{i} \in \admred{\Weyl}$ such that $\widehat{\adm}(\mathbf{i})=RR'RR'.$ 
\begin{equation*}
[ss^*]t[ss^*]t, \quad st[ss^*]t, \quad  s^*t[ss^*]t. 
\end{equation*}
Direct computation shows that adding $s$ or $s^*$ to the right end of each of the above word results in either a non-reduced word or a reduced word not in $\admred{\Weyl}$: 
\begin{align*}
  [ss^*]t[ss^*]ts &\sim s\underline{s^*ts^*t}st \quad &st[ss^*]ts^* &\sim \underline{stst}s^*t   \\
  [ss^*]t[ss^*]ts^* &\sim s^*\underline{stst}s^*t  &s^*t[ss^*]ts &\sim \underline{s^*ts^*t}st  \\
 st[ss^*]ts &\sim [ss^*]t[ss^*]t  &s^*t[ss^*]ts^*&\sim [ss^*]t[ss^*]t
\end{align*}
The underlined parts indicate that the word is not reduced. 
The last line of each column shows examples where the word is reduced but not in $\admred{\Weyl}.$

In (D), let us list all reduced words $\mathbf{i} \in \admred{\Weyl}$ such that $\widehat{\adm}(\mathbf{i})=RR'RR'R.$ 
\begin{center}
\begin{tabular}{lllll}
 $[ss^*][tt^*][ss^*][tt^*][ss^*]$ & &  $[ss^*][tt^*]s^*[tt^*]s$ & & $s[tt^*][ss^*]ts$ \\
 $s[tt^*][ss^*][tt^*][ss^*]$ & & $s[tt^*][ss^*][tt^*]s$ & & $st[ss^*][tt^*]s^*$ \\
 $s^*[tt^*][ss^*][tt^*][ss^*]$ & & $s[tt^*][ss^*][tt^*]s^*$ & & $s^*[tt^*][ss^*]ts$ \\
 $[ss^*][tt^*][ss^*][tt^*]s$ & & $s^*[tt^*][ss^*][tt^*]s$ & & $s^*[tt^*]s^*[tt^*]s$ \\
 $[ss^*][tt^*][ss^*][tt^*]s^*$ & & $s^*[tt^*][ss^*][tt^*]s^*$ & & $s[tt^*]s^*[tt^*]s^*$ \\
 $st[ss^*][tt^*][ss^*]$ & & $sts^*[tt^*][ss^*]$ & & $[ss^*]t^*s^*t[ss^*]$ \\
 $[ss^*][tt^*][ss^*]ts$ & & $s^*t^*[ss^*]t[ss^*]$ & & $[ss^*]ts^*t^*[ss^*]$ \\
 $s[tt^*]s^*[tt^*][ss^*]$ & & $st[ss^*][tt^*]s$ & &
\end{tabular}
\end{center}

\noindent
Direct computation shows that adding $t$ or $t^*$ to the right end of each of the above words results in either a non-reduced word, or a reduced word not in $\admred{\Weyl}.$ Given below are two examples to illustrate these points. The first is an example of a non-reduced word (see the underlined part). The second is an example of a reduced word not in $\admred{\Weyl}$ (compare with the last line).
\begin{align*}
 [ss^*][tt^*][ss^*][tt^*][ss^*]t &\sim ss^*tt^*s^*stst^*s^* \quad &s[tt^*][ss^*][tt^*][ss^*]t &\sim t^*tsts^*tt^*ss^*t \\
 & \sim ss^*t^*ts^*tst^*ts^*t  & &\sim tt^*ss^*ts^*t^*s^*st \\
 & \sim ss^*t^*s^*ts^*t^*s^*sts^*  & &\sim [tt^*][ss^*][tt^*][ss^*][tt^*]\\
 & \sim st^*s^*\underline{t^*tt^*}s^*t^*sts^* 
\end{align*}
Finally, by Claim \ref{cl:adjacency},  we may discard the possibility of the appearance of the word $[RR']_m$ in $\Reflection{j_1} \cdots \Reflection{j_k}$ where $R,R' \in S_{\tau}$ for \emph{all} $m > m(R,R').$ This completes the proof.
\end{proof}

\section{Liftings of Schubert classes} \label{sec:lifting}

Let $u \in \Weyl_{\tau}^P$ with $\tlength{u} = {\ell}$ so that $\deg \tschubert{u} = {\ell}.$
Motivated by Lemma \ref{ob}, in order to find a lifting of $\tschubert{u},$ we look for an element $w \in \Weyl^P$ with the following properties:
\begin{enumerate}[(C1)]
\item $\length{w} ={\ell},$
\item $w \leq \embedding (u),$
\item $w \nleq \embedding (u')$ for all $u' \in \TWeyl^P$ such that $u \nleq u'.$
\end{enumerate}

\begin{proposition} \label{prop:liftingcondition} 
Let $u \in \TWeyl^P$ with $\tlength{u} = {\ell}$ and $w \in \Weyl^P.$ Then $\embedding^*(\schubert{w}) = c \cdot \tschubert{u}$ for some nonzero $c \in \basel$ if and only if $w$ satisfies the above conditions \normalfont{(C1)--(C3)}.
\end{proposition}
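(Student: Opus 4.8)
The plan is to read off both implications from the coordinate formula $\embedding^*(\xi)(u')=\tauproj\bigl(\xi(\embedding(u'))\bigr)$ together with the characterization of Schubert classes in Lemma~\ref{ob}, applied to the structure algebra $\sa{\tpmoment}$ (this is legitimate: $\tpmoment$ is the parabolic moment graph of $(\Root_{\tau},(\Root_P)_{\tau})$, $\TWeyl$ is a finite reflection group, and the relevant coefficient ring is $\basel$). Throughout I will use two properties of $\tauproj\colon\sym\to\sym_{\tau}$: it is a \emph{graded} ring homomorphism, since it is the algebra map extending the degree-one $\basek$-linear map $\tauproj|_{U_{\Lattice}}\colon U_{\Lattice}\to\tauproj(U_{\Lattice})$; and it is \emph{injective} (Remark~\ref{rmk:tauproj}). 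Write $\xi:=\embedding^*(\schubert{w})\in\sa{\tpmoment}$, so that $\xi(u')=\tauproj\bigl(\schubert{w}(\embedding(u'))\bigr)$ for all $u'\in\TWeyl^P$; since each $\schubert{w}(\embedding(u'))$ is either $0$ or homogeneous of degree $\length{w}$ (Lemma~\ref{prop:schubert1}(a)) and $\tauproj$ is graded, $\xi$ is homogeneous of degree $\length{w}$.

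\textbf{Necessity of (C1)--(C3).} Suppose $\xi=c\cdot\tschubert{u}$ with $c\in\basel$ nonzero. Evaluating at $u$ gives $\tauproj\bigl(\schubert{w}(\embedding(u))\bigr)=\xi(u)=c\cdot\tschubert{u}(u)$, and by Lemma~\ref{prop:schubert1}(b) the right-hand side is nonzero and (Lemma~\ref{prop:schubert1}(a)) homogeneous of degree $\tlength{u}=\ell$. Hence $\schubert{w}(\embedding(u))\neq 0$, which by Definition~\ref{def:schubert} forces $w\leq\embedding(u)$, i.e.\ (C2); and since $\schubert{w}(\embedding(u))$ is homogeneous of degree $\length{w}$ while its nonzero image under the graded map $\tauproj$ is homogeneous of degree $\ell$, we get $\length{w}=\ell$, i.e.\ (C1). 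For (C3), let $u'\in\TWeyl^P$ with $u\nleq u'$; then $\tschubert{u}(u')=0$ by Definition~\ref{def:schubert}, so $\tauproj\bigl(\schubert{w}(\embedding(u'))\bigr)=\xi(u')=0$, whence $\schubert{w}(\embedding(u'))=0$ by injectivity of $\tauproj$, and therefore $w\nleq\embedding(u')$ by Lemma~\ref{prop:schubert1}(c).

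\textbf{Sufficiency of (C1)--(C3).} Assume (C1)--(C3). I will verify that $\xi$ satisfies condition (a) of Lemma~\ref{ob} for the element $u\in\TWeyl^P$. First, $\xi$ is homogeneous of degree $\length{w}=\ell=\tlength{u}$ by (C1). Second, $\xi(u)\neq 0$: by (C2) we have $w\leq\embedding(u)$, so $\schubert{w}(\embedding(u))\neq 0$ by Lemma~\ref{prop:schubert1}(c), and hence $\xi(u)=\tauproj\bigl(\schubert{w}(\embedding(u))\bigr)\neq 0$ since $\tauproj$ is injective. Third, for $u'\in\TWeyl^P$ with $u'\ngeq u$, i.e.\ $u\nleq u'$, condition (C3) gives $w\nleq\embedding(u')$, so $\schubert{w}(\embedding(u'))=0$ by Definition~\ref{def:schubert}, and thus $\xi(u')=0$. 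By Lemma~\ref{ob}, $\xi=c\cdot\tschubert{u}$ for some nonzero $c\in\basel$, as required.

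\textbf{Expected main obstacle.} There is little genuine difficulty here once Lemma~\ref{ob} is in hand: the entire argument is the translation of the statement through the formula for $\embedding^*$, and the only points requiring care are that $\tauproj\colon\sym\to\sym_{\tau}$ both preserves degree and is injective (so that vanishing of $\xi(u')$ is equivalent to vanishing of $\schubert{w}(\embedding(u'))$). The substantive work — replacing the conditions (C1)--(C3), which still refer to the Bruhat order on $\Weyl$ and the embedding $\embedding$, by purely combinatorial conditions on reduced words via $\adm$ and $\foldingset{\Weyl}$ — is the content of the subsequent results (Propositions~\ref{prop:embeddingbruhat} and \ref{prop:phihat} and what follows), not of this proposition.
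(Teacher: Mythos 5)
Your proof is correct and follows essentially the same approach as the paper's: evaluate the coordinates of $\embedding^*(\schubert{w})$ via the formula for $\embedding^*$, use injectivity of $\tauproj$ together with Lemma~\ref{prop:schubert1} to translate the conditions (C1)--(C3) across $\embedding$, and in the backward direction invoke the characterization of Schubert classes in Lemma~\ref{ob}. The only cosmetic difference is that the paper derives (C1) by a one-line chain of degree equalities, while you spell out the gradedness of $\tauproj$ explicitly; both arguments are equivalent.
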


\begin{proof}
\noindent
$(\Ra)$ \,
Suppose $\embedding^*(\schubert{w}) = c \cdot \tschubert{u}$ for some $c \in \basel^{\times}.$
Then we have $\length{w} = \deg \schubert{w} = \deg \embedding^*(\schubert{w}) = \deg \tschubert{u} = \tlength{u} =\ell.$ Thus (C1) is verified. 
Since $0 \neq c\cdot \tschubert{u}(u) = \embedding^*(\schubert{w})(u) = \tauproj\big(\schubert{w}(\embedding(u))\big),$ we have $\schubert{w}(\embedding(u)) \neq 0.$ By Definition \ref{def:schubert}, $w \leq \embedding(u)$ follows, verifying (C2).
Finally, let $u' \in \TWeyl^P$ with $u \nleq u'.$ We have $0 = c \cdot \tschubert{u}(u') = \embedding^*(\schubert{w})(u') = \tauproj\big(\schubert{w}(\embedding(u'))\big).$
Since $\tauproj$ is injective (Remark \ref{rmk:tauproj}), we obtain $\schubert{w}(\embedding(u')) = 0.$ Now Lemma \ref{prop:schubert1} (c) gives us $w \nleq \embedding(u'),$ verifying (C3).

\noindent
$(\La)$ \,
Let $w$ satisfy (C1)--(C3).
We show that $\embedding^*(\schubert{w})$ is homogeneous of degree $\ell$ with properties $\embedding^*(\schubert{w})(u) \neq 0$ and $\embedding^*(\schubert{w})(u')=0$ for all $u' \ngeq u,$ then apply Lemma \ref{ob}. 
First, we have $\deg \embedding^*(\schubert{w}) = \deg \schubert{w} = \length{w}=\ell$ by (C1).
Next, we have $\embedding^*(\schubert{w})(u)= \tauproj\big(\schubert{w}(\embedding(u))\big).$ By (C2), we have $\embedding(u) \geq w,$ hence $\schubert{w}(\embedding(u)) \neq 0$ by Lemma \ref{prop:schubert1} (c). Since $\tauproj$ is injective, we obtain $\embedding^*(\schubert{w})(u) \neq 0.$
Finally, Let $u' \ngeq u.$ By (C3), we have $\embedding (u') \ngeq w$ which implies $\schubert{w}(\embedding (u'))= 0$ by Definition \ref{def:schubert}. Thus $\embedding^*(\schubert{w})(u') = \tauproj\big(\schubert{w}(\embedding(u'))\big) = 0.$ 
\end{proof}

\noindent
Proposition~\ref{prop:liftingcondition} states the condition for a Schubert class $\tschubert{u}$ of $\sa{\tpmoment}$ to admit a lifting $\schubert{w}$ in $\sa{\pmoment}$ in terms of the relationship between the group elements $u \in \TWeyl^P$ and $w \in \Weyl^P.$ This provides the first answer to Question~\ref{q:lifting}. However, verifying (C3) is not easy even for the smallest examples of foldings since it requires the full description of the moment graphs involved. We aim to restate the conditions (C1)--(C3) using the combinatorics of Coxeter groups.

Let us recall the map $\adm: S \ra S_{\tau}$ defined in \eqref{eq:adm}.
Using $\adm,$ we now define a subset $\foldingset{\Weyl} \subset \Weyl$ which is later shown to control the liftability of Schubert classes of $\sa{\tpmoment}.$

\begin{definition} \label{def:foldingset}
The \emph{folding subset} of $\Weyl,$ denoted by $\foldingset{\Weyl},$ is a subset of $\Weyl$ consisting of the identity element $e \in \Weyl$ and elements $w \in \Weyl$ satisfying the following.
\begin{enumerate}[\normalfont (FS1)]
\item If $\reflection{i_1} \cdots \reflection{i_{\ell}} \in \red{w},$ then $\adm(\reflection{i_1}) \cdots \adm(\reflection{i_{\ell}}) \in \red{\TWeyl}.$
\item If $\reflection{i_1} \cdots \reflection{i_{\ell}}, \reflection{i'_1} \cdots \reflection{i'_{\ell}} \in \red{w},$ then $\adm(\reflection{i_1}) \cdots \adm(\reflection{i_{\ell}}), \adm(\reflection{i'_1}) \cdots \adm(\reflection{i'_{\ell}}) \in \red{u}$ for some common $u \in \TWeyl.$
\end{enumerate}
\end{definition}

\begin{remark} \label{rmk:foldingset} 
\begin{enumerate}[\normalfont (a)]
\item Let $w \in \foldingset{\Weyl}$ and $\reflection{i_1}\cdots \reflection{i_{\ell}} \in \red{w}.$ For any $1 \leq p \leq q \leq \ell,$ the consecutive subexpression $w'=\reflection{i_p}\cdots \reflection{i_q}$ satisfies (FS1) and (FS2). Hence $w' \in \foldingset{\Weyl}.$  
\item Let us recall the map $\widehat{\adm}: \red{\Weyl} \ra \FM{S_{\tau}}$ defined before Proposition \ref{prop:phihat}. By (FS1), the map $\widehat{\adm}: \red{\Weyl} \ra \FM{S_{\tau}}$ restricted to $\cup_{w \in \foldingset{\Weyl}} \red{w}$ is the same as applying $\adm: S \ra S_{\tau}$ to each simple reflection in $\reflection{i_1}\cdots \reflection{i_{\ell}} \in \red{w}.$
\item Let $w \in \Weyl$ with a unique reduced expression $\reflection{i_1}\cdots\reflection{i_{\ell}}.$ Then $w \in \foldingset{\Weyl}$ by the following argument. By (b) and Proposition \ref{prop:phihat}, $\adm(\reflection{i_1})\cdots\adm(\reflection{i_{\ell}}) \in \red{\TWeyl},$ verifying (FS1). Since $\red{w}$ is a singleton, (FS2) trivially holds.
\end{enumerate}
\end{remark}

\noindent
By definition of the folding subset $\foldingset{\Weyl},$ the map $\adm: S \ra S_{\tau}$ extends to 
\begin{align} \label{eq:phibar}
\lbar{\adm}: \foldingset{\Weyl} \longra \TWeyl: \, &w=e \mapsto e \nonumber \\
&w=\reflection{i_1} \cdots \reflection{i_{\ell}} \mapsto \adm(\reflection{i_1}) \cdots \adm(\reflection{i_{\ell}}) \nonumber
\end{align}
where $w=\reflection{i_1} \cdots \reflection{i_{\ell}}$ is any reduced expression. 
The condition (FS2) guarantees that $\lbar{\adm}$ is well-defined, and (FS1) guarantees that it preserves the length, that is,  $\tlength{\lbar{\adm}(w)} = \length{w}$ for $w \in \foldingset{\Weyl}.$
Theorem \ref{prop:liftingcondition1} will restate the lifting conditions (C1)--(C3) solely in terms of $\lbar{\adm}.$

\begin{proposition} \label{prop:c3rephrase1} 
Let $u \in \TWeyl$ with $\tlength{u}={\ell}.$ Suppose $w \in \Weyl$ satisfies \normalfont{(C1), (C2)} and $w \in \foldingset{\Weyl}.$ Then $w$ satisfies \normalfont{(C3)}. 
\end{proposition}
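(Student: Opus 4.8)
The plan is to prove the contrapositive of (C3): assuming $w \in \foldingset{\Weyl}$ satisfies (C1) and (C2), I will show that if $w \leq \embedding(u')$ for some $u' \in \TWeyl^P$, then $u \leq u'$. So fix such a $u'$ and pick a reduced expression $\Reflection{j_1}\cdots\Reflection{j_k}$ of $u'$ in $\TWeyl$; by Proposition~\ref{prop:embeddingbruhat}, $\embedding(\Reflection{j_1})\cdots\embedding(\Reflection{j_k})$ is a reduced expression of $\embedding(u')$ in $\Weyl$. Since $w \leq \embedding(u')$, Lemma~\ref{prop:coxeter}(b) gives a reduced expression $\reflection{i_1}\cdots\reflection{i_{\ell}}$ of $w$ that is a subexpression of $\embedding(\Reflection{j_1})\cdots\embedding(\Reflection{j_k})$.

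The key point is now to relate this subexpression statement back to the folded group via $\adm$. Because $w \in \foldingset{\Weyl}$, condition (FS1) tells us that $\adm(\reflection{i_1})\cdots\adm(\reflection{i_{\ell}})$ is a reduced word in $\TWeyl$; in particular consecutive letters differ, so the hypothesis of Lemma~\ref{prop:phi} is met. Applying Lemma~\ref{prop:phi} (with $\reflection{i_1}\cdots\reflection{i_{\ell}}$ a subexpression of $\embedding(\Reflection{j_1})\cdots\embedding(\Reflection{j_k})$) yields that $\adm(\reflection{i_1})\cdots\adm(\reflection{i_{\ell}})$ is a subexpression of $\Reflection{j_1}\cdots\Reflection{j_k}$. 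By Remark~\ref{rmk:foldingset}(b), $\adm(\reflection{i_1})\cdots\adm(\reflection{i_{\ell}})$ is exactly $\lbar{\adm}(w)$, which is a reduced word since $w \in \foldingset{\Weyl}$. Hence Lemma~\ref{prop:coxeter}(b) (applied in $\TWeyl$ to the reduced word $\Reflection{j_1}\cdots\Reflection{j_k}$ of $u'$) gives $\lbar{\adm}(w) \leq u'$.

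It remains to identify $\lbar{\adm}(w)$ with $u$. Here I invoke (C1) and (C2): by (C2), $w \leq \embedding(u)$; by (C1), $\length{w} = \ell = \tlength{u}$. Running the argument of the previous paragraph with $u'$ replaced by $u$ (note $u \in \TWeyl^P$ works just as well as $u'$, and we do not actually need $u \in \TWeyl^P$ for this step — only $w \leq \embedding(u)$), we obtain $\lbar{\adm}(w) \leq u$; but $\tlength{\lbar{\adm}(w)} = \length{w} = \tlength{u}$ because $\lbar{\adm}$ preserves length on $\foldingset{\Weyl}$, and a Bruhat-smaller element of the same length must equal it, so $\lbar{\adm}(w) = u$. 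Combining with $\lbar{\adm}(w) \leq u'$ gives $u \leq u'$, which is the contrapositive of (C3).

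The step I expect to be the crux is the application of Lemma~\ref{prop:phi}: one must be careful that the chosen reduced expression of $w$ really is a subexpression of a \emph{reduced} expression $\embedding(\Reflection{j_1})\cdots\embedding(\Reflection{j_k})$ of $\embedding(u')$ — this is exactly what Proposition~\ref{prop:embeddingbruhat} secures — and that (FS1) supplies the "no two consecutive $\adm$-images equal" hypothesis that Lemma~\ref{prop:phi} requires. Everything else is a bookkeeping combination of Lemma~\ref{prop:coxeter}(b) in the two groups with the length-preservation property of $\lbar{\adm}$.
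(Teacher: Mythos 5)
Your proof is correct and follows essentially the same route as the paper: both use Proposition~\ref{prop:embeddingbruhat}, Lemma~\ref{prop:coxeter}(b), and Lemma~\ref{prop:phi} twice (once with $u$, once with $u'$), with (FS1)/(FS2) securing well-definedness and length-preservation of $\lbar{\adm}$. The only cosmetic difference is that you deduce $\lbar{\adm}(w)=u$ from $\lbar{\adm}(w)\le u$ plus equality of lengths, whereas the paper observes directly that a length-$\ell$ subexpression of a length-$\ell$ word must be the whole word — the same observation packaged slightly differently.
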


\begin{proof}
\noindent
Let $w \in \foldingset{\Weyl}$ and let $u = \Reflection{j_1} \cdots \Reflection{j_{\ell}}$ be a reduced expression.
By applying the group homomorphism $\embedding,$ we obtain
\begin{equation}\label{Reduced}
\embedding(u) = \embedding(\Reflection{j_1}) \cdots \embedding(\Reflection{j_{\ell}}), 
\end{equation}
which is a reduced expression in $\Weyl$ by Proposition \ref{prop:embeddingbruhat}. 
Since $w \leq \embedding(u),$ by Lemma~\ref{prop:coxeter} (b), $w$ admits some reduced expression $w = \reflection{i_1} \cdots \reflection{i_{\ell}}$ which is a subexpression of \eqref{Reduced}. Since $w \in \foldingset{\Weyl},$ we have $\adm(\reflection{i_p}) \neq \adm(\reflection{i_{p+1}})$ for all $1 \leq p \leq \ell-1.$ By Lemma~\ref{prop:phi}, the sequence $\adm(\reflection{i_1}) \cdots \adm(\reflection{i_{\ell}})$ is a subexpression of $u = \Reflection{j_1} \cdots \Reflection{j_{\ell}}.$ Since the numbers of simple reflections in the two expressions are the same, we obtain $\adm(\reflection{i_1}) \cdots \adm(\reflection{i_{\ell}}) = \Reflection{j_1} \cdots \Reflection{j_{\ell}} = u.$

Let $u' \in \TWeyl$ such that $w \leq \embedding(u').$ We aim to show $u \leq u'.$
Let $u' = \Reflection{j'_1} \cdots \Reflection{j'_k}$ be a reduced expression. Then 
\begin{equation} \label{Reduced2}
\embedding(u') = \embedding(\Reflection{j'_1}) \cdots \embedding(\Reflection{j'_k})
\end{equation}
is a reduced expression in $\Weyl.$ Since $w \leq \embedding(u'),$ by Lemma~\ref{prop:coxeter} (b), $w$ admits some reduced expression $w = \reflection{i'_1} \cdots \reflection{i'_{\ell}}$ which is a subexpression of \eqref{Reduced2}.
Since $w \in \foldingset{\Weyl},$ we have $\adm(\reflection{i'_p}) \neq \adm(\reflection{i'_{p+1}})$ for all $1 \leq p \leq \ell-1.$ By Lemma~\ref{prop:phi}, it follows that $\adm(\reflection{i'_1}) \cdots \adm(\reflection{i'_{\ell}})$ is a subexpression of $u' = \Reflection{j'_1} \cdots \Reflection{j'_k}.$ Since $w \in \foldingset{\Weyl},$ $\adm(\reflection{i'_1}) \cdots \adm(\reflection{i'_{\ell}}) =\adm(\reflection{i_1}) \cdots \adm(\reflection{i_{\ell}})= u.$ By Lemma~\ref{prop:coxeter} (b), $u \leq u'$ follows.
\end{proof}

\begin{proposition} \label{prop:c3rephrase2} 
Let $u \in \TWeyl$ with $\tlength{u}={\ell}.$ Suppose $w \in \Weyl$ satisfies the conditions \normalfont{(C1)--(C3)}. Then $w \in \foldingset{\Weyl}.$
\end{proposition}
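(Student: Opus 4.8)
The plan is to verify the two conditions (FS1) and (FS2) of Definition~\ref{def:foldingset} directly, the whole argument hinging on a single claim: \emph{every reduced word of $w$ already lies in $\admred{w}$}, i.e.\ no reduced word of $w$ contains a pair of adjacent opposite simple reflections. Granting this, $\widehat{\adm}$ restricted to the reduced words of $w$ is nothing but letterwise application of $\adm$, so (FS1) is immediate from Proposition~\ref{prop:phihat}; and (FS2) will drop out of (C3) together with a length count.

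First I would prove the claim by contradiction. Let $M$ be the maximal number of adjacencies of opposite simple reflections occurring among all reduced words of $w$, and suppose $M\ge 1$. Note that in a reduced word such adjacencies occupy pairwise disjoint pairs of positions: a run $\reflection{i_p}\reflection{i_{p+1}}\reflection{i_{p+2}}$ of three letters with equal $\adm$-image would force $\reflection{i_{p+2}}=\reflection{i_p}$, and since $\reflection{i_p}$ and its opposite $\reflection{i_p}^{*}$ commute (Lemma~\ref{taurational}(a)), this would make the word non-reduced. Pick $\mathbf{i}=\reflection{i_1}\cdots\reflection{i_{\ell}}\in\admred{w}$. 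By Proposition~\ref{prop:phihat}, $\widehat{\adm}(\mathbf{i})=\Reflection{j_1}\cdots\Reflection{j_k}$ is a reduced word in $\TWeyl$, and $k=\ell-M<\ell$; put $u'':=\Reflection{j_1}\cdots\Reflection{j_k}$, so $\tlength{u''}=k<\ell=\tlength{u}$. The point is that $w\le\embedding(u'')$. Indeed, $\embedding(\Reflection{j_1})\cdots\embedding(\Reflection{j_k})$ is a reduced expression of $\embedding(u'')$ by Proposition~\ref{prop:embeddingbruhat}, and because opposite simple reflections commute each block $\embedding(\Reflection{j_q})$ can be written, up to one possible extra opposite reflection, as exactly the run of $\mathbf{i}$ that $\widehat{\adm}$ collapsed to $\Reflection{j_q}$; choosing these orderings inside the blocks exhibits $\reflection{i_1}\cdots\reflection{i_{\ell}}$ as a subexpression of a reduced expression of $\embedding(u'')$, whence $w\le\embedding(u'')$ by Lemma~\ref{prop:coxeter}(b). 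Now the contrapositive of (C3) — if $w\le\embedding(u')$ then $u\le u'$ — gives $u\le u''$, contradicting $\tlength{u''}<\tlength{u}$. Hence $M=0$, the claim holds, and (FS1) follows.

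For (FS2), let $\mathbf{i}=\reflection{i_1}\cdots\reflection{i_{\ell}}\in\red{w}$ be arbitrary and set $u_{\mathbf{i}}:=\adm(\reflection{i_1})\cdots\adm(\reflection{i_{\ell}})\in\TWeyl$; by (FS1) this product is a reduced word, so $\tlength{u_{\mathbf{i}}}=\ell$. As before, $\embedding(u_{\mathbf{i}})=\embedding(\adm(\reflection{i_1}))\cdots\embedding(\adm(\reflection{i_{\ell}}))$ is a reduced expression (Proposition~\ref{prop:embeddingbruhat}) in which each $\reflection{i_p}$ occurs inside its block, so $w\le\embedding(u_{\mathbf{i}})$; applying (C3) yields $u\le u_{\mathbf{i}}$, and the equality of lengths $\tlength{u}=\ell=\tlength{u_{\mathbf{i}}}$ forces $u_{\mathbf{i}}=u$. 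Thus every reduced word of $w$ has the same letterwise $\adm$-image $u$, which is precisely (FS2) (and, incidentally, identifies $\lbar{\adm}(w)=u$). Together with the trivial case $w=e$, this gives $w\in\foldingset{\Weyl}$.

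I expect the main obstacle to be the claim in the second paragraph — specifically, the idea of feeding a \emph{maximally collapsed} reduced word of $w$ into Proposition~\ref{prop:phihat} so as to manufacture a strictly shorter element $u''\in\TWeyl$ that still dominates $w$ under $\embedding$, which then collides with (C3) for purely length reasons. The surrounding book-keeping — tracking how $\embedding$ re-expands each block $\embedding(\Reflection{j_q})$ into a word containing the corresponding run of $\mathbf{i}$, using that $s$ and $s^{*}$ commute — is routine but must be written out carefully.
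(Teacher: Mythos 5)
Your proof is correct and follows essentially the same route as the paper's. Both proofs hinge on the same key step: feeding an element of $\admred{w}$ into Proposition~\ref{prop:phihat} to produce a $\TWeyl$-reduced word that, if any collapse occurs, yields an element $u''\in\TWeyl$ of length strictly less than $\tlength{u}$ yet with $w\le\embedding(u'')$, contradicting (C3); the paper phrases this as showing that elements of $\admred{w}$ have no adjacent opposite reflections while you phrase it as showing the maximal adjacency count $M$ is zero, but these are the same claim, and both then deduce (FS1) from Proposition~\ref{prop:phihat} and (FS2) from (C3) together with the length count. The only cosmetic differences are that you spell out the disjointness of adjacency positions and the block-reordering of $\embedding(\Reflection{j_q})$ more explicitly (the paper compresses this into ``by construction''), and your (FS2) is argued directly whereas the paper's is by contradiction; neither change affects correctness or the underlying mechanism.
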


\begin{proof}
Our aim is to establish the defining properties (FS1), (FS2) of $\foldingset{\Weyl}$ for $w.$ First, we prove the following claim, which is apparently weaker than (FS1):

\begin{claim}
Every reduced expression $w = \reflection{i_1} \cdots \reflection{i_{\ell}}$ satisfies $\adm(\reflection{i_p}) \neq \adm(\reflection{i_{p+1}})$ for all $1 \leq p \leq \ell-1.$
\end{claim}

\noindent
By the definition of the set $\admred{w}$ (see before Proposition \ref{prop:phihat}), it suffices to prove the statement for the elements of $\admred{w}.$
Let $\reflection{i'_1} \cdots \reflection{i'_{\ell}} \in \admred{w}$ and recall the map $\widehat{\adm}: \red{\Weyl} \ra \FM{S_{\tau}}.$ 
By Proposition \ref{prop:phihat}, $\widehat{\adm}(\reflection{i'_1}\cdots \reflection{i'_{\ell}})$ is $\TWeyl$-reduced.
Write the $\TWeyl$-reduced word so obtained as $\Reflection{j'_1}\cdots \Reflection{j'_k}$ for some $k \leq \ell,$ and let $u':=\Reflection{j'_1} \cdots \Reflection{j'_k} \in \TWeyl.$  Suppose $\adm(\reflection{i'_p}) = \adm(\reflection{i'_{p+1}})$ for some $1 \leq p \leq \ell-1.$ Then $k < \ell,$ that is, $\tlength{u'} < \tlength{u}.$ Since $w = \reflection{i'_1} \cdots \reflection{i'_{\ell}}$ is a subexpression of $\embedding(u') = \embedding(\Reflection{j'_1}) \cdots \embedding(\Reflection{j'_k})$ by construction, by Lemma~\ref{prop:coxeter} (b), we obtain $w \leq \embedding(u')$ where $u \nleq u',$ a contradiction to (C3). Hence $\adm(\reflection{i'_p}) \neq \adm(\reflection{i'_{p+1}})$ for all $1 \leq p \leq \ell-1.$ 

We now proceed to prove (FS1).
Knowing that every element of $\red{w}$ has no adjacency of pairs of opposite simple reflections has two consequences. First, Proposition \ref{prop:phihat} applies to each $\reflection{i_1} \cdots \reflection{i_{\ell}} \in \red{w}.$ Second, the map $\widehat{\adm}$ coincides with the operation of applying $\adm$ to each $\reflection{i_p}.$
Thus, for each  $\reflection{i_1} \cdots \reflection{i_{\ell}} \in \red{w},$ the expression $\widehat{\adm}(\reflection{i_1} \cdots \reflection{i_{\ell}}) = \adm(\reflection{i_1}) \cdots \adm(\reflection{i_{\ell}})$ is $\TWeyl$-reduced by Proposition \ref{prop:phihat}.
Finally, we prove (FS2). Let $u = \Reflection{j_1} \cdots \Reflection{j_{\ell}}$ be a reduced expression. Since $w \leq \embedding(u),$ by Lemma~\ref{prop:coxeter} (b), $w$ has a reduced expression $\reflection{i_1}\cdots \reflection{i_{\ell}}$ which is a subexpression of $\embedding(\Reflection{j_1}) \cdots \embedding(\Reflection{j_{\ell}}).$ Since $\adm(\reflection{i_p}) \neq \adm(\reflection{i_{p+1}})$ holds for all $1 \leq p \leq \ell-1,$ by Lemma~\ref{prop:phi}, $\adm(\reflection{i_1}) \cdots \adm(\reflection{i_{\ell}})$ is a subexpression of $\Reflection{j_1} \cdots \Reflection{j_{\ell}}.$ Since the numbers of simple reflections involved in the two expressions are the same, we obtain $\adm(\reflection{i_1}) \cdots \adm(\reflection{i_{\ell}}) = \Reflection{j_1} \cdots \Reflection{j_{\ell}}.$ 
Suppose that there is $\reflection{i'_1} \cdots \reflection{i'_{\ell}} \in \red{w}$ with $\adm(\reflection{i'_1}) \cdots \adm(\reflection{i'_{\ell}}) \in \red{u'}$ for some $u' \neq u$ in $\TWeyl.$
By construction, $\reflection{i'_1} \cdots \reflection{i'_{\ell}}$ is a subexpression of $\embedding\big(\adm(\reflection{i'_1})\big) \cdots \embedding\big(\adm(\reflection{i'_{\ell}})\big).$ Since $u'=\adm(\reflection{i'_1}) \cdots \adm(\reflection{i'_{\ell}})$ is $\TWeyl$-reduced by (FS1), $\embedding(u')=\embedding(\adm(\reflection{i'_1})) \cdots \embedding(\adm(\reflection{i'_{\ell}}))$ is a $\Weyl$-reduced expression (Proposition~\ref{prop:embeddingbruhat}). By Lemma~\ref{prop:coxeter} (b), we obtain $w = \reflection{i'_1} \cdots \reflection{i'_{\ell}} \leq \embedding(u')$ where $\tlength{u'} = \tlength{u}$ and $u' \neq u$ (hence $u \nleq u'$). This is a contradiction to (C3).
\end{proof}

\begin{theorem} \label{prop:liftingcondition1}
Let $u \in \TWeyl$ with $\tlength{u}={\ell}$ and $w \in \Weyl.$ Then $w$ satisfies \normalfont{(C1)--(C3)} if and only if $w \in \foldingset{\Weyl}$ and $u=\lbar{\adm}(w).$
\end{theorem}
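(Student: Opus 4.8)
The plan is to prove the biconditional by assembling the two one-directional implications that have already been established as Propositions~\ref{prop:c3rephrase1} and \ref{prop:c3rephrase2}, together with the earlier combinatorial infrastructure. For the forward direction, I would start from $w$ satisfying (C1)--(C3). Proposition~\ref{prop:c3rephrase2} gives immediately that $w \in \foldingset{\Weyl}$. It then remains to identify $u$ with $\lbar{\adm}(w)$. The natural route is: pick any reduced expression $u = \Reflection{j_1}\cdots\Reflection{j_{\ell}}$; by Proposition~\ref{prop:embeddingbruhat} the expression $\embedding(u) = \embedding(\Reflection{j_1})\cdots\embedding(\Reflection{j_{\ell}})$ is $\Weyl$-reduced; by (C2) and Lemma~\ref{prop:coxeter}~(b), $w$ has a reduced expression $\reflection{i_1}\cdots\reflection{i_{\ell}}$ that is a subexpression of it; since $w \in \foldingset{\Weyl}$, the Claim inside the proof of Proposition~\ref{prop:c3rephrase2} (or (FS1) directly) gives $\adm(\reflection{i_p}) \neq \adm(\reflection{i_{p+1}})$ for all $p$, so Lemma~\ref{prop:phi} shows $\adm(\reflection{i_1})\cdots\adm(\reflection{i_{\ell}})$ is a subexpression of $\Reflection{j_1}\cdots\Reflection{j_{\ell}}$; and since both words have length $\ell$ (using (C1)), this forces equality, i.e.\ $\lbar{\adm}(w) = u$. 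This is exactly the computation already done in the first paragraph of the proof of Proposition~\ref{prop:c3rephrase1}, so I would simply cite that argument.

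For the reverse direction, suppose $w \in \foldingset{\Weyl}$ and $u = \lbar{\adm}(w)$. Condition (C1), $\length{w} = \tlength{u} = \ell$, is immediate from the fact that $\lbar{\adm}$ preserves length (noted right after the definition of $\lbar{\adm}$, using (FS1)). For (C2): fix a reduced expression $w = \reflection{i_1}\cdots\reflection{i_{\ell}}$; since $w \in \foldingset{\Weyl}$ we have (via (FS1) and Remark~\ref{rmk:foldingset}~(b)) $\adm(\reflection{i_1})\cdots\adm(\reflection{i_{\ell}}) = u$ is $\TWeyl$-reduced, and $\reflection{i_1}\cdots\reflection{i_{\ell}}$ is a subexpression of $\embedding(\adm(\reflection{i_1}))\cdots\embedding(\adm(\reflection{i_{\ell}})) = \embedding(u)$ because each $\reflection{i_p}$ is a subexpression of $\embedding(\adm(\reflection{i_p}))$ (this uses $\embedding(\adm(\reflection{i_p})) = \reflection{i_p}$ or $\reflection{i_p}\reflection{i_p}^*$); the latter word is $\Weyl$-reduced by Proposition~\ref{prop:embeddingbruhat}, so by Lemma~\ref{prop:coxeter}~(b), $w \leq \embedding(u)$. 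Finally (C3) follows directly from Proposition~\ref{prop:c3rephrase1}, whose hypotheses are precisely that $w$ satisfies (C1), (C2) and $w \in \foldingset{\Weyl}$, all of which we have now verified.

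Since every ingredient needed is already present in the excerpt, the proof is essentially a bookkeeping assembly; I do not anticipate a genuine obstacle, but the one point requiring care is making sure the chain of identifications in the forward direction actually pins down $u$ uniquely rather than merely exhibiting $\lbar{\adm}(w) \leq u$ — this is handled by the length-count ($\ell$ simple reflections on each side, forcing the subexpression to be the whole word), and it is worth stating this explicitly rather than glossing over it. I would write the proof as two short paragraphs, $(\Ra)$ and $(\La)$, each mostly a sequence of citations to Propositions~\ref{prop:embeddingbruhat}, \ref{prop:c3rephrase1}, \ref{prop:c3rephrase2}, Lemmas~\ref{prop:coxeter} and \ref{prop:phi}, and the length-preservation remark, concluding with the stated equivalence.

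\begin{proof}
$(\Ra)$ \, Suppose $w$ satisfies (C1)--(C3). By Proposition~\ref{prop:c3rephrase2}, $w \in \foldingset{\Weyl}.$ Let $u = \Reflection{j_1}\cdots\Reflection{j_{\ell}}$ be a reduced expression. By Proposition~\ref{prop:embeddingbruhat}, $\embedding(u) = \embedding(\Reflection{j_1})\cdots\embedding(\Reflection{j_{\ell}})$ is a reduced expression in $\Weyl.$ By (C2) and Lemma~\ref{prop:coxeter}~(b), $w$ admits a reduced expression $w = \reflection{i_1}\cdots\reflection{i_{\ell}}$ (using (C1) for the length) which is a subexpression of this word. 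Since $w \in \foldingset{\Weyl},$ we have $\adm(\reflection{i_p}) \neq \adm(\reflection{i_{p+1}})$ for all $1 \leq p \leq \ell-1$ (see the Claim in the proof of Proposition~\ref{prop:c3rephrase2}), so by Lemma~\ref{prop:phi} the word $\adm(\reflection{i_1})\cdots\adm(\reflection{i_{\ell}})$ is a subexpression of $\Reflection{j_1}\cdots\Reflection{j_{\ell}}.$ As both have the same number $\ell$ of simple reflections, they are equal, whence $\lbar{\adm}(w) = \adm(\reflection{i_1})\cdots\adm(\reflection{i_{\ell}}) = u.$

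$(\La)$ \, Suppose $w \in \foldingset{\Weyl}$ and $u = \lbar{\adm}(w).$ Since $\lbar{\adm}$ preserves length (a consequence of (FS1)), $\length{w} = \tlength{u} = \ell,$ so (C1) holds. Fix a reduced expression $w = \reflection{i_1}\cdots\reflection{i_{\ell}}.$ By (FS1) and Remark~\ref{rmk:foldingset}~(b), $u = \adm(\reflection{i_1})\cdots\adm(\reflection{i_{\ell}})$ is a reduced expression in $\TWeyl,$ so $\embedding(u) = \embedding(\adm(\reflection{i_1}))\cdots\embedding(\adm(\reflection{i_{\ell}}))$ is a reduced expression in $\Weyl$ by Proposition~\ref{prop:embeddingbruhat}. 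Each $\reflection{i_p}$ is a subexpression of $\embedding(\adm(\reflection{i_p}))$ (which equals $\reflection{i_p}$ or $\reflection{i_p}\reflection{i_p}^*$), so $\reflection{i_1}\cdots\reflection{i_{\ell}}$ is a subexpression of $\embedding(u).$ By Lemma~\ref{prop:coxeter}~(b), $w \leq \embedding(u),$ verifying (C2). Finally, $w$ satisfies (C1), (C2) and $w \in \foldingset{\Weyl},$ so Proposition~\ref{prop:c3rephrase1} gives (C3).
\end{proof}
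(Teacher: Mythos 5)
Your proof is correct and follows essentially the same route as the paper: both directions assemble from Propositions~\ref{prop:c3rephrase1} and \ref{prop:c3rephrase2}, using Lemma~\ref{prop:coxeter}~(b), Lemma~\ref{prop:phi}, Proposition~\ref{prop:embeddingbruhat}, and the length-preservation of $\lbar{\adm}$, with the same length-count argument to pin down $u = \lbar{\adm}(w)$ in the forward direction. Your citations are slightly more explicit (e.g.\ invoking Proposition~\ref{prop:embeddingbruhat} by name for the reducedness of $\embedding(u)$), but the structure and substance are identical to the paper's.
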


\begin{proof}
Suppose $w$ satisfies (C1)--(C3). Then by Proposition \ref{prop:c3rephrase2}, $w \in \foldingset{\Weyl}.$ Let $u = \Reflection{j_1} \cdots \Reflection{j_{\ell}}$ be a reduced expression. Since $\length{w}=\ell$ and $w \leq \embedding(u),$ by Lemma~\ref{prop:coxeter} (b), $w$ has a reduced expression $w = \reflection{i_1} \cdots \reflection{i_{\ell}}$ which is a subexpression of $\embedding(\Reflection{j_1}) \cdots \embedding(\Reflection{j_{\ell}}).$ Since we have $\adm(\reflection{i_p}) \neq \adm(\reflection{i_{p+1}})$ for all $1 \leq p \leq \ell-1$ thanks to $w \in \foldingset{\Weyl},$ by Lemma~\ref{prop:phi}, it follows that $\adm(\reflection{i_1}) \cdots \adm(\reflection{i_{\ell}})$ is a subexpression of $\Reflection{j_1} \cdots \Reflection{j_{\ell}}.$ Since the numbers of simple reflections in the two expressions are the same, we obtain $\adm(\reflection{i_1}) \cdots \adm(\reflection{i_{\ell}}) = \Reflection{j_1} \cdots \Reflection{j_{\ell}},$ that is, $\lbar{\adm}(w) = u.$

Conversely, suppose $w \in \foldingset{\Weyl}$ and $u=\lbar{\adm}(w).$ Since $\lbar{\adm}$ is length-preserving, $\length{w}=\ell$ must hold, establishing (C1). Next, $u=\lbar{\adm}(w)$ implies that for some $\Weyl$-reduced expression $w = \reflection{i_1} \cdots \reflection{i_{\ell}},$ we have $u=\adm(\reflection{i_1}) \cdots \adm(\reflection{i_{\ell}})$ which is $\TWeyl$-reduced. Then $\reflection{i_1} \cdots \reflection{i_{\ell}}$ is a subexpression of $\embedding(\adm(\reflection{i_1})) \cdots \embedding(\adm(\reflection{i_{\ell}}))$ by construction. Hence by Lemma~\ref{prop:coxeter} (b), $w \leq \embedding(u),$ establishing (C2). Finally, by Proposition \ref{prop:c3rephrase1}, $w \in \foldingset{\Weyl}$ implies (C3).
\end{proof}

\noindent
By Proposition~\ref{prop:liftingcondition} and Theorem~\ref{prop:liftingcondition1}, we obtain the following lifting criterion.

\begin{corollary} \label{prop:liftingcondition2}
Let $u \in \TWeyl^P.$ Then the Schubert class $\tschubert{u}$ admits a lifting to $\sa{\pmoment}$ if and only if $u \in \lbar{\adm}(\foldingset{\Weyl} \cap \Weyl^P).$
\end{corollary}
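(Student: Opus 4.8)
The plan is to combine Proposition~\ref{prop:liftingcondition} with Theorem~\ref{prop:liftingcondition1}, after taking care of the shift between the ambient groups $\Weyl, \TWeyl$ and their parabolic quotients $\Weyl^P, \TWeyl^P$. Recall that by Definition~\ref{def:schubertlifting}, $\tschubert{u}$ admits a lifting precisely when there exists $\schubert{w} \in \sa{\pmoment}$ with $\embedding^*(\schubert{w}) = c \cdot \tschubert{u}$ for some $c \in \basel^\times$; by Lemma~\ref{prop:basis}, such a $\schubert{w}$ must be (a scalar multiple of) a Schubert class $\schubert{w}$ with $w \in \Weyl^P$, so the question is exactly whether some $w \in \Weyl^P$ satisfies $\embedding^*(\schubert{w}) = c \cdot \tschubert{u}$.

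First I would invoke Proposition~\ref{prop:liftingcondition}: for $u \in \TWeyl^P$ and $w \in \Weyl^P$, the relation $\embedding^*(\schubert{w}) = c \cdot \tschubert{u}$ holds for some nonzero $c \in \basel$ if and only if $w$ satisfies the conditions (C1)--(C3). Next, Theorem~\ref{prop:liftingcondition1} translates (C1)--(C3) into the purely combinatorial statement: $w \in \foldingset{\Weyl}$ and $u = \lbar{\adm}(w)$. Combining these two, $\tschubert{u}$ is liftable if and only if there exists $w \in \Weyl^P$ with $w \in \foldingset{\Weyl}$ and $\lbar{\adm}(w) = u$; that is, if and only if $u \in \lbar{\adm}\big(\foldingset{\Weyl} \cap \Weyl^P\big)$.

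The one point that requires a small argument is the compatibility of the parabolic restriction: Theorem~\ref{prop:liftingcondition1} is stated for $u \in \TWeyl$ and $w \in \Weyl$, whereas here we need $u \in \TWeyl^P$ and $w \in \Weyl^P$. On the $\Weyl$-side this is harmless, since we are simply restricting the search for $w$ to the subset $\Weyl^P$, which is exactly what appears in the set $\foldingset{\Weyl} \cap \Weyl^P$ on the right-hand side. On the $\TWeyl$-side, one must check that if $w \in \Weyl^P$ and $w \in \foldingset{\Weyl}$ then $\lbar{\adm}(w) \in \TWeyl^P$ automatically; this follows because $\length{w} = \tlength{\lbar{\adm}(w)}$ (the length-preserving property of $\lbar{\adm}$ guaranteed by (FS1)), $\embedding(\lbar{\adm}(w)) \geq w$ by (C2) of Theorem~\ref{prop:liftingcondition1}, together with Lemma~\ref{prop:restriction} which says $\embedding$ carries $\TWeyl^P$ into $\Weyl^P$ and is compatible with minimal-length coset representatives — so a $w \in \Weyl^P$ cannot be dominated in the required length-matching way by $\embedding(u')$ for $u' \notin \TWeyl^P$. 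Conversely, given $u \in \TWeyl^P$ in the image, the $w$ producing it already lies in $\Weyl^P$ by hypothesis.

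I expect the genuine content to be entirely upstream — in Proposition~\ref{prop:liftingcondition} and Theorem~\ref{prop:liftingcondition1} — so the corollary itself should be essentially a one-paragraph bookkeeping argument; the only mild obstacle is making the $\TWeyl^P$ versus $\TWeyl$ matching airtight, which is handled by Lemma~\ref{prop:restriction} as indicated. I would write it as: apply Proposition~\ref{prop:liftingcondition}, then Theorem~\ref{prop:liftingcondition1}, then note the parabolic sets match up via Lemma~\ref{prop:restriction}, and conclude.
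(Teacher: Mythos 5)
Your proposal is correct and matches the paper's approach exactly: the paper states the corollary as an immediate consequence of Proposition~\ref{prop:liftingcondition} and Theorem~\ref{prop:liftingcondition1} without further argument, so the bookkeeping you describe (lifting exists $\Leftrightarrow$ some $w \in \Weyl^P$ satisfies (C1)--(C3) $\Leftrightarrow$ $w \in \foldingset{\Weyl} \cap \Weyl^P$ with $\lbar{\adm}(w) = u$) is precisely the intended reasoning, and your reduction of the lifting to a Schubert class $\schubert{w}$ with $w \in \Weyl^P$ via Lemma~\ref{prop:basis} is the one detail actually worth spelling out.

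One small correction, though: the worry you raise in your third paragraph — that one must show $\lbar{\adm}(w) \in \TWeyl^P$ for $w \in \foldingset{\Weyl} \cap \Weyl^P$ — is not actually needed for the corollary as stated. The hypothesis already places $u \in \TWeyl^P$, so the corollary only asserts a biconditional for such $u$; it does not claim (and need not use) that the set $\lbar{\adm}(\foldingset{\Weyl} \cap \Weyl^P)$ lies inside $\TWeyl^P$. Moreover, the length-count sketch you give for this auxiliary fact is not airtight: from $\length{w} = \tlength{u}$ and $w \leq \embedding(\lbar{u})$ one cannot immediately conclude a contradiction, since $\length{\embedding(\lbar{u})}$ can exceed $\tlength{\lbar{u}}$ (each generator of $S_{\tau}$ has $\embedding$-image of length $1$ or $2$). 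If one does want this containment, the clean route is via Proposition~\ref{prop:sublifting}: a reduced word of $u$ ending in $R \in P_\tau$ would force a reduced word of $w$ ending in a generator of $\adm^{-1}(R) \subset P$, contradicting $w \in \Weyl^P$. But for the corollary itself, you can simply omit this paragraph.
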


\begin{corollary} \label{prop:reduced}
Let $u \in \TWeyl^P$ with $\tlength{u}={\ell}.$ If there exists $w \in \Weyl^P$ with a unique reduced expression $w = \reflection{i_1} \cdots \reflection{i_{\ell}}$ and if $\adm(\reflection{i_1}) \cdots \adm(\reflection{i_{\ell}}) = u,$ then $\tschubert{u}$ lifts to $\schubert{w}.$
\end{corollary}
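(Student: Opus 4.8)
The plan is to obtain this as an immediate consequence of Remark~\ref{rmk:foldingset}(c) together with Theorem~\ref{prop:liftingcondition1} and Proposition~\ref{prop:liftingcondition}, so the work is essentially bookkeeping of results already established in Sections~\ref{sec:comfolding}--\ref{sec:lifting}.

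First I would record that $w \in \foldingset{\Weyl}$. Since $\red{w}$ is a singleton, namely $\{\reflection{i_1}\cdots\reflection{i_{\ell}}\}$, we have $\admred{w}=\red{w}$, so Proposition~\ref{prop:phihat} applies and $\widehat{\adm}(\reflection{i_1}\cdots\reflection{i_{\ell}})$ is a reduced word of $\TWeyl$; moreover this $\widehat{\adm}$-image is just $\adm(\reflection{i_1})\cdots\adm(\reflection{i_{\ell}})$ letter by letter (no adjacency of opposite simple reflections can occur, again because $\red{w}$ is a singleton). Hence (FS1) holds, and (FS2) holds vacuously, which is exactly the content of Remark~\ref{rmk:foldingset}(c). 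Consequently $\lbar{\adm}$ is defined on $w$, and by hypothesis $\lbar{\adm}(w)=\adm(\reflection{i_1})\cdots\adm(\reflection{i_{\ell}})=u$. Since $\lbar{\adm}$ is length-preserving, $\tlength{u}=\length{w}=\ell$, consistent with the hypothesis on $u$.

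Now, with $w\in\foldingset{\Weyl}$ and $u=\lbar{\adm}(w)$, Theorem~\ref{prop:liftingcondition1} gives that $w$ satisfies conditions (C1)--(C3). Since in addition $w\in\Weyl^P$ and $u\in\TWeyl^P$, Proposition~\ref{prop:liftingcondition} yields $\embedding^*(\schubert{w})=c\cdot\tschubert{u}$ for some $c\in\basel^{\times}$, which is precisely the assertion that $\tschubert{u}$ lifts to $\schubert{w}$ in the sense of Definition~\ref{def:schubertlifting}. (One could instead invoke Corollary~\ref{prop:liftingcondition2}, but that only gives liftability of $\tschubert{u}$; to identify $\schubert{w}$ as an explicit lifting one must pass through Theorem~\ref{prop:liftingcondition1} and Proposition~\ref{prop:liftingcondition} as above.)

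There is essentially no obstacle: the only point needing a moment's care is verifying the hypotheses of Remark~\ref{rmk:foldingset}(c)---that feeding the reduced word of $w$ through $\adm$ letter by letter produces a reduced word of $\TWeyl$---which is exactly what Proposition~\ref{prop:phihat} supplies once the uniqueness of the reduced expression is used to guarantee $\red{w}=\admred{w}$.
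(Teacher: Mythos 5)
Your proof is correct and follows essentially the same route as the paper's, which cites Remark~\ref{rmk:foldingset}(c) to place $w$ in $\foldingset{\Weyl}$, notes $\lbar{\adm}(w)=u$, and concludes via Corollary~\ref{prop:liftingcondition2}. You merely unroll Corollary~\ref{prop:liftingcondition2} into its two ingredients (Theorem~\ref{prop:liftingcondition1} followed by Proposition~\ref{prop:liftingcondition}) to make explicit that $\schubert{w}$ itself is the lifting rather than just some unnamed class, which is a fair bit of care but does not change the substance of the argument.
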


\begin{proof}
By Remark \ref{rmk:foldingset} (c), $w$ belongs to $\foldingset{\Weyl},$ and we have $\lbar{\adm}(w)=u.$ Hence by Corollary \ref{prop:liftingcondition2}, $\tschubert{u}$ admits a lifting, namely $\schubert{w}.$
\end{proof}

When a Schubert class $\tschubert{u} \in \sa{\tpmoment}$ lifts to $\schubert{w} \in \sa{\pmoment},$ we explicitly know the nonzero constant $c \in \basel$ appearing in the Definition \ref{def:schubertlifting}.

\begin{proposition} \label{prop:liftingcoefficient}
Let $\schubert{w} \in \sa{\pmoment}$ be a lifting of $\tschubert{u} \in \sa{\tpmoment}.$ Then
$\embedding^*(\schubert{w}) = \tau^m \cdot \tschubert{u}$
for some unique $m \in \Z_{\geq 0}.$
\end{proposition}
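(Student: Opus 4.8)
The plan is to compute $\embedding^*(\schubert{w})$ explicitly at the vertex $u$, where we already know from Proposition~\ref{prop:liftingcondition} and Theorem~\ref{prop:liftingcondition1} that $w \in \foldingset{\Weyl}$, $\lbar{\adm}(w) = u$, and $\length{w} = \tlength{u} = \ell$. By Lemma~\ref{ob}, $\embedding^*(\schubert{w}) = c \cdot \tschubert{u}$ forces $c$ to be detectable from a single coordinate, and the most convenient one is the $u$-coordinate: $c \cdot \tschubert{u}(u) = \embedding^*(\schubert{w})(u) = \tauproj\big(\schubert{w}(\embedding(u))\big)$. So the whole matter reduces to comparing $\tauproj\big(\schubert{w}(\embedding(u))\big)$ with $\tschubert{u}(u)$, and showing their ratio is $\tau^m$ for a unique $m \in \Z_{\geq 0}$.

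First I would fix a reduced expression $u = \Reflection{j_1}\cdots\Reflection{j_\ell}$ and the corresponding reduced expression $w = \reflection{i_1}\cdots\reflection{i_\ell}$ with $\adm(\reflection{i_p}) = \Reflection{j_p}$ (which exists since $w \in \foldingset{\Weyl}$ and $\lbar{\adm}(w) = u$). By Proposition~\ref{prop:embeddingbruhat}, $\embedding(u) = \embedding(\Reflection{j_1})\cdots\embedding(\Reflection{j_\ell})$ is reduced. Now by Lemma~\ref{prop:schubert1}(b), $\schubert{w}(\embedding(u)) = \prod_{\alpha \in \Root^+,\ \embedding(u)\inv(\alpha)\in\Root^-}\alpha$, and $\tschubert{u}(u) = \prod_{\beta\in\Root_\tau^+,\ u\inv(\beta)\in\Root_\tau^-}\beta$; both products have exactly $\ell$ factors. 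The key computation is to match the inversion set of $u$ in $\Root_\tau$ with (the $\tauproj$-image of) the inversion set of $\embedding(u)$ in $\Root$, and to track what power of $\tau$ appears. Using Lemma~\ref{prop:embeddingproperty} ($u(\tauproj(x)) = \tauproj(\embedding(u)(x))$) together with the fact that $\tauproj$ is a bijection, I would show that the inversions of $u$ correspond bijectively to $\tauproj$-images of a distinguished subset of inversions of $\embedding(u)$ — roughly, for each step $\Reflection{j_p}$ of the reduced word for $u$, the associated positive root $\Reflection{j_1}\cdots\Reflection{j_{p-1}}(\lbar{\alpha_{i_p}})$ is $\tauproj$ of $\reflection{i_1}\cdots\reflection{i_{p-1}}(\alpha_{i_p})$ or of $\reflection{i_1}\cdots\reflection{i_{p-1}}\reflection{i_p}(\alpha_{i_p^*})$ depending on whether $\alpha_{i_p}\in\Simple^{\tauo}$ or $\Simple_{rat}$. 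Because $\tauproj(\alpha) = \lbar{\alpha}$ for $\alpha\in\Simple^{\tauo}\sqcup\Simple_{rat}$ while $\tauproj(\tauo\alpha) = \tauproj(\kt\alpha)$ differs from $\lbar\alpha$ by a factor involving $\tau$ (since $\kt$ acts by $\tau$ on $\ku$, and $\tauproj\circ\kt = \tau\cdot\tauproj$ up to the eigenspace identification — here the exact normalization from the formula $\tauproj(u) = \tfrac{1}{\tau-\sigma}(\kt - \sigma I)(u)$ is needed), each root in the original inversion set that lies in the $\tauo(\Simple_{rat})$-"half" contributes one factor of $\tau$ under $\tauproj$.

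The main obstacle, and the step I would spend the most care on, is the bookkeeping of $\tau$-powers: I need to show that $\tauproj\big(\prod \alpha\big) = \tau^m \prod_\beta \beta = \tau^m\,\tschubert{u}(u)$ where $m$ counts exactly the factors $\alpha$ in the $\embedding(u)$-inversion set whose $\tauproj$-image is $\tau$ times the corresponding simple-system vector of $\Root_\tau$. Concretely, $\tauproj$ is $\sym$-to-$\sym_\tau$ a ring homomorphism, so $\tauproj(\prod\alpha) = \prod\tauproj(\alpha)$, and the claim is that after reindexing via the bijection above, each $\tauproj(\alpha)$ equals either the matching positive root of $\Root_\tau$ or $\tau$ times it. Establishing this cleanly requires observing that $\tauo$ acts on the root lattice $\RLattice$ (condition (F3)) and that $\tauproj$ intertwines the $\Weyl$- and $\TWeyl$-actions, so it suffices to check the claim on simple roots, where it is immediate from the partition $\Simple = \Simple^{\tauo}\sqcup\Simple_{rat}\sqcup\tauo(\Simple_{rat})$ and the definition $\tauproj(\alpha) = \lbar\alpha$, $\tauproj(\tauo\alpha) = $ (a $\tau$-multiple of) $\lbar\alpha$. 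Uniqueness of $m$ is then automatic: since $\tau\in\basel^\times$ and $\sym_\tau = \Sym_{\basel}\tauproj(U_\Lambda)$ is a polynomial ring over the integral domain $\basel$ (with $\basel\subset\R$), the element $\tschubert{u}(u)$ is a nonzero homogeneous polynomial, so the scalar $c$ with $c\cdot\tschubert{u}(u) = \tau^m\,\tschubert{u}(u)$ is uniquely $\tau^m$, and distinct nonnegative integers $m$ give distinct scalars $\tau^m$ in $\basel$ (as $|\tau|\neq 1$ in all the cases arising, or more robustly, as $\tau$ is not a root of unity). Finally, $m \geq 0$ is clear since $\tauproj$ of a product of positive roots cannot introduce negative $\tau$-powers — no division occurs. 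I would close by noting $m$ equals the number of letters $\reflection{i_p}$ in the chosen reduced word for $w$ with $\alpha_{i_p}\in\tauo(\Simple_{rat})$ appearing "after the collapse," giving a combinatorial interpretation of $m$.
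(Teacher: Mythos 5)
Your overall plan — evaluate at the $u$-coordinate, reduce to comparing $\tauproj\big(\schubert{w}(\embedding(u))\big)$ with $\tschubert{u}(u)$, use Lemma~\ref{prop:embeddingproperty} to intertwine the actions, and track the factor of $\tau$ coming from $\tauproj\circ\kt=\tau\cdot\tauproj$ on the simple roots in $\kt(\Simple_{rat})$ — is exactly the route the paper takes, and your final characterization of $m$ is the correct one. However, there is a genuine error at the central computational step. You invoke Lemma~\ref{prop:schubert1}(b) to write $\schubert{w}(\embedding(u)) = \prod_{\alpha\in\Root^+,\ \embedding(u)\inv(\alpha)\in\Root^-}\alpha$. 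That lemma only gives $\schubert{w}(w)$; the expression you wrote down is $\schubert{\embedding(u)}(\embedding(u))$, not $\schubert{w}(\embedding(u))$. In particular it is homogeneous of degree $\length{\embedding(u)}=L$, not of degree $\ell=\length{w}$, so your subsequent claim that ``both products have exactly $\ell$ factors'' is false whenever $\Simple_{rat}\neq\emptyset$, i.e.\ whenever the folding is non-trivial. The correct tool is the subword formula of Definition~\ref{def:schubert}: one fixes the reduced expression $\embedding(u)=\embedding(\Reflection{j_1})\cdots\embedding(\Reflection{j_\ell})$ (reduced by Proposition~\ref{prop:embeddingbruhat}) and observes that the chosen reduced word $\reflection{i_1}\cdots\reflection{i_\ell}$ of $w$ sits inside it as a subword, giving $\schubert{w}(\embedding(u))=\prod_{p=1}^\ell \mathbf{r}_{i_p}(\alpha_{i_p})$; the factors $\mathbf{r}_{i_p}(\alpha_{i_p})$ are then identified with $\embedding(\Reflection{j_1}\cdots\Reflection{j_{p-1}})(\alpha_{i_p})$ using the orthogonality of $\gamma$ and $\kt\gamma$ for $\gamma\in\Simple_{rat}$ (Lemma~\ref{taurational}(a)), and only after that does one apply $\tauproj$ and Lemma~\ref{prop:embeddingproperty}. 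Your later remarks about ``a distinguished subset of inversions of $\embedding(u)$'' show you sensed that only $\ell$ of the $L$ inversion roots are relevant, but the equation you started from contradicts this, so the argument as written does not go through.

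A second, smaller issue: your uniqueness argument asserts that $\tau$ is not a root of unity. In the split case $\basel=\basek\times\basek$ with $p(x)=x^2-1$ one has $\tau=[1,-1]$ and $\tau^2=1$, so $\tau$ is a root of unity of order~$2$; distinct nonnegative integers $m$ do \emph{not} give distinct scalars $\tau^m$ in that case. The paper's own proof pins $m$ down as a combinatorially defined quantity (the number of $p$ with $\alpha_{i_p}\in\kt(\Simple_{rat})$), rather than extracting it from the scalar $c$ after the fact, which sidesteps this problem.
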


\begin{proof}
We have $\embedding^*(\schubert{w}) = c \cdot \tschubert{u}$ for some nonzero $c \in \basel.$ In particular, by looking at the coordinate at $\embedding(u),$ this implies $\tauproj\big(\schubert{w}(\embedding(u))\big) = c\cdot \tschubert{u}(u).$ 
Let $w=\reflection{1}\cdots \reflection{\ell}$ be a $\Weyl$-reduced expression, and write $\adm(\reflection{i})=:\Reflection{i}.$ Then $u=\lbar{\adm}(w)=\Reflection{1}\cdots \Reflection{\ell}$ is a $\TWeyl$-reduced expression (since $w \in \foldingset{\Weyl}$ by Theorem \ref{prop:liftingcondition1}). Apply $\embedding$ to this expression and write the $\Weyl$-reduced expression so obtained as $\embedding(u)=\reflection{1}'\cdots \reflection{L}'$ for appropriate $L.$ By construction, this expression contains $w=\reflection{1}\cdots \reflection{\ell}$ as a subexpression. Denote this subexpression by $\reflection{i_1}' \cdots \reflection{i_{\ell}}'$ where $1 \leq i_1 < \cdots < i_{\ell} \leq L.$ We have $\reflection{i_p}'=\reflection{p}$ and $\adm(\reflection{i_p}') = \adm(\reflection{p}) = \Reflection{p}$ for each $1 \leq p \leq \ell.$ Let $\alpha_{p}$ denote the simple root corresponding to $\reflection{p},$ and let $\beta_p$ denote the simple root corresponding to $\Reflection{p}.$ By Definition \ref{def:schubert}, we compute 
\begin{equation} \label{eq:nonzeroentry}
\schubert{w}\big(\embedding(u)\big) = \prod_{p=1}^{\ell} \mathbf{r}_{i_p}(\alpha_{p}),
\end{equation}
where $\mathbf{r}_{i_p}(\alpha_{p}) = \reflection{1}'\reflection{2}' \cdots \reflection{i_p -1}'(\alpha_{p}).$ By Lemma~\ref{prop:embeddingproperty},
\[
\tauproj\big(\mathbf{r}_{i_p}(\alpha_{p})\big) = \tauproj\big(\embedding(\Reflection{1}\cdots\Reflection{p-1})(\alpha_{p})\big) =
\begin{cases}
\Reflection{1}\cdots\Reflection{p-1}(\beta_p) \,\, &\text{if} \,\, \alpha_{p} \in \Simple^{\tauo} \sqcup \Simple_{rat},\\
\Reflection{1}\cdots\Reflection{p-1}(\tau\beta_p) &\text{if} \,\, \alpha_p \in \tauo(\Simple_{rat}).
\end{cases}
\]
Hence we obtain
\begin{equation*}
\tauproj\big(\schubert{w}(\embedding(u))\big) = \tau^m \prod_{p=1}^{\ell} \Reflection{1}\cdots\Reflection{p-1}(\beta_p) = \tau^m \cdot \tschubert{u}(u)
\end{equation*}
where $m$ is the number of elements $\alpha_p \in \Simple_{rat}$ appearing in \eqref{eq:nonzeroentry}. Thus $c = \tau^m.$
\end{proof}

\begin{definition}
Let $u \in \TWeyl^P.$ We say that $u$ is \emph{liftable} or \emph{admits a lifting} if there exists an element $w \in \Weyl^P$ such that $w \in \foldingset{\Weyl}$ and $\lbar{\adm}(w)=u.$ Any such element $w \in \Weyl^P$ is called a \emph{lifting} of $u$ and we say that $u$ \emph{lifts to} $w.$
\end{definition}

\noindent
Using this terminology, Corollary \ref{prop:liftingcondition2} says that a  Schubert class $\tschubert{u} \in \sa{\tpmoment}$ lifts to $\schubert{w} \in \sa{\pmoment}$ if and only if $u$ lifts to $w.$

\begin{example}[Type $A_4$ to $H_2$] \label{egg:typeA4lifting}
Let $\Root \rightsquigarrow \Root_{\tau}$ be the folding of the root system of type $A_4$ to $H_2.$ See the folding diagram in Section~\ref{app:folding}. 
The folded Coxeter group $\TWeyl$ is the dihedral group $D_{10}.$ For each $u \in \TWeyl$ with $0 \leq \tlength{u} \leq 4,$ Table~\ref{fig:liftingsA4} shows a complete list of liftings $w \in \foldingset{\Weyl}.$ 
In this case, every lifting $w \in \foldingset{\Weyl}$ has a unique reduced expression. 
It will be shown in Lemma~\ref{prop:basicnonliftable2} (VI)(a) that the longest element $u = \Reflection{1}\Reflection{2}\Reflection{1}\Reflection{2}\Reflection{1} = \Reflection{2}\Reflection{1}\Reflection{2}\Reflection{1}\Reflection{2}$ is not liftable.

\begin{table} 
\begin{center}
\begin{tabular}{ |c|c|c|}
\hline
$\tlength{u}$ & $u \in \TWeyl$ & Liftings $w \in \foldingset{\Weyl}$ \\
\hline
$4$ & $\Reflection{1}\Reflection{2}\Reflection{1}\Reflection{2}$ & $\reflection{1}\reflection{2}\reflection{3}\reflection{4}$ \\
\hline
$4$ & $\Reflection{2}\Reflection{1}\Reflection{2}\Reflection{1}$ & $\reflection{4}\reflection{3}\reflection{2}\reflection{1}$ \\
\hline
$3$ & $\Reflection{1}\Reflection{2}\Reflection{1}$ & $\reflection{1}\reflection{2}\reflection{3},$ \, $\reflection{3}\reflection{2}\reflection{1}$ \\
\hline
$3$ & $\Reflection{2}\Reflection{1}\Reflection{2}$ & $\reflection{2}\reflection{3}\reflection{4},$ \, $\reflection{4}\reflection{3}\reflection{2}$ \\
\hline
$2$ & $\Reflection{1}\Reflection{2}$ & $\reflection{1}\reflection{2},$ \, $\reflection{3}\reflection{2},$ \, $\reflection{3}\reflection{4}$ \\
\hline
$2$ & $\Reflection{2}\Reflection{1}$ & $\reflection{2}\reflection{1},$ \, $\reflection{2}\reflection{3},$ \, $\reflection{4}\reflection{3}$  \\
\hline
$1$ & $\Reflection{1}$ & $\reflection{1},$ \, $\reflection{3}$ \\
\hline
$1$ & $\Reflection{2}$ & $\reflection{2},$ \, $\reflection{4}$\\
\hline
$0$ & $e$ & $e$ \\
\hline
\end{tabular}
\caption{Liftings of $\TWeyl$-elements: type $A_4$ to type $H_2$}
\label{fig:liftingsA4}
\end{center}
\end{table}
\end{example}

\begin{example}[Type $D_6 / A_4$ to $H_3 / H_2$] \label{egg:typeD6lifting}
Let $\Root \rightsquigarrow \Root_{\tau}$ be the folding of the root system of type $D_6$ to $H_3.$ See the folding diagram in Section~\ref{app:folding}.
We take the parabolic subgroup $\Weyl_P := \langle \reflection{2}, \reflection{3}, \reflection{4}, \reflection{6}\rangle,$ which is of type $A_4.$ 
We have $(\Weyl_P)_{\tau} = \langle \Reflection{2}, \Reflection{3}\rangle,$ and it forms a parabolic subgroup of $\TWeyl$ of type $H_2.$  
For each $u \in \TWeyl^P,$ Table \ref{fig:liftingsD6modA4} shows a complete list of liftings $w \in \foldingset{\Weyl} \cap \Weyl^P.$
In Table \ref{fig:liftingsD6modA4}, observe first that each $w = \reflection{i_1} \cdots \reflection{i_{\ell}}$ (reduced expression) satisfies $\adm(\reflection{i_p})= \Reflection{j_p}$ for each $1 \leq p \leq \ell,$ where $u=\Reflection{j_1} \cdots \Reflection{j_{\ell}}$ is the prescribed reduced expression of $u.$ For those $w \in \Weyl^P$ with $0 \leq \length{w} \leq 5,$ $w$ has a unique reduced expression. Hence by Corollary \ref{prop:reduced}, $\tschubert{u}$ lifts to $\schubert{w}.$ For those $w \in \Weyl^P$ with $6 \leq \length{w} \leq 10,$ observe that each $w$ has precisely two distinct reduced expressions by commuting $\reflection{5}$ and $\reflection{6},$ and that the two reduced expressions obtained  by applying $\adm$ to each $\reflection{i_p} (1 \leq p \leq \length{w})$ give rise to the same element $u \in \TWeyl.$ Hence $w \in \foldingset{\Weyl} \cap \Weyl^P,$ and by Corollary \ref{prop:liftingcondition2}, $\tschubert{u}$ lifts to $\schubert{w}$ for $w \in \Weyl$ given in Table~\ref{fig:liftingsD6modA4}.  

\begin{table}
\begin{center}
\begin{tabular}{ |c|c|c|}
\hline
$\tlength{u}$ & $u \in \TWeyl$ & Liftings $w \in \foldingset{\Weyl} \cap \Weyl^P$ \\
\hline
$10$ & $\Reflection{1}\Reflection{2}\Reflection{3}\Reflection{2}\Reflection{1}\Reflection{3}\Reflection{2}\Reflection{3}\Reflection{2}\Reflection{1}$ & $\reflection{1}\reflection{2}\reflection{3}\reflection{4}\reflection{5}\reflection{6}\reflection{4}\reflection{3}\reflection{2}\reflection{1}$ \\
\hline
$9$ & $\Reflection{2}\Reflection{3}\Reflection{2}\Reflection{1}\Reflection{3}\Reflection{2}\Reflection{3}\Reflection{2}\Reflection{1}$ & $\reflection{2}\reflection{3}\reflection{4}\reflection{5}\reflection{6}\reflection{4}\reflection{3}\reflection{2}\reflection{1}$ \\
\hline
$8$ & $\Reflection{3}\Reflection{2}\Reflection{1}\Reflection{3}\Reflection{2}\Reflection{3}\Reflection{2}\Reflection{1}$ & $\reflection{3}\reflection{4}\reflection{5}\reflection{6}\reflection{4}\reflection{3}\reflection{2}\reflection{1}$ \\
\hline
$7$ & $\Reflection{2}\Reflection{1}\Reflection{3}\Reflection{2}\Reflection{3}\Reflection{2}\Reflection{1}$ & $\reflection{4}\reflection{5}\reflection{6}\reflection{4}\reflection{3}\reflection{2}\reflection{1}$ \\
\hline
$6$ & $\Reflection{1}\Reflection{3}\Reflection{2}\Reflection{3}\Reflection{2}\Reflection{1}$ & $\reflection{5}\reflection{6}\reflection{4}\reflection{3}\reflection{2}\reflection{1}$ \\
\hline
$5$ & $\Reflection{3}\Reflection{2}\Reflection{3}\Reflection{2}\Reflection{1}$ & $\reflection{6}\reflection{4}\reflection{3}\reflection{2}\reflection{1}$  \\
\hline
$5$ & $\Reflection{1}\Reflection{2}\Reflection{3}\Reflection{2}\Reflection{1}$ & $\reflection{5}\reflection{4}\reflection{3}\reflection{2}\reflection{1},$  \, $\reflection{1}\reflection{2}\reflection{3}\reflection{4}\reflection{5}$\\
\hline
$4$ & $\Reflection{2}\Reflection{3}\Reflection{2}\Reflection{1}$ & $\reflection{4}\reflection{3}\reflection{2}\reflection{1},$ \, $\reflection{2}\reflection{3}\reflection{4}\reflection{5}$\\
\hline
$3$ & $\Reflection{3}\Reflection{2}\Reflection{1}$ & $\reflection{3}\reflection{2}\reflection{1},$ \, $\reflection{3}\reflection{4}\reflection{5},$ \, $\reflection{6}\reflection{4}\reflection{5}$\\
\hline
$2$ & $\Reflection{2}\Reflection{1}$ & $\reflection{2}\reflection{1},$ \, $\reflection{4}\reflection{5}$\\
\hline
$1$ & $\Reflection{1}$ & $\reflection{1},$ \, $\reflection{5}$\\
\hline
$0$ & $e$ & $e$ \\
\hline
\end{tabular}
\caption{Liftings of $\TWeyl$-elements: type $D_6/A_4$ to type $H_3/H_2$}
\label{fig:liftingsD6modA4}
\end{center}
\end{table}

\end{example}

\section{Nonliftable elements} \label{sec:nolifting}
In this section, we discuss a way to detect nonliftable elements of $\TWeyl.$

\begin{lemma} \label{prop:sublifting1}
Let $u \in \TWeyl$ with $\tlength{u}={\ell}$ and let $w \in \foldingset{\Weyl}$ be a lifting of $u.$ Let $\Reflection{j_1}\cdots\Reflection{j_{\ell}} \in \red{u}$ and suppose that there exists $\reflection{i_1}\cdots\reflection{i_{\ell}} \in \red{w}$ such that $\adm(\reflection{i_p})=\Reflection{j_p}$ for each $1 \leq p \leq \ell.$ Then for any $1 \leq p \leq q \leq \ell,$ the consecutive subexpression $u':=\Reflection{j_p}\Reflection{j_{p+1}}\cdots\Reflection{j_q}$ lifts to $w' := \reflection{i_p}\reflection{i_{p+1}}\cdots \reflection{i_q}.$
\end{lemma}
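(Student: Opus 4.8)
The plan is to show that the consecutive subword $w' = \reflection{i_p}\cdots\reflection{i_q}$ is indeed a lifting of $u' = \Reflection{j_p}\cdots\Reflection{j_q}$ by checking the two conditions in the definition of "liftable": that $w' \in \foldingset{\Weyl}$ and that $\lbar{\adm}(w') = u'$. Membership in $\foldingset{\Weyl}$ is the easy half. Since $w \in \foldingset{\Weyl}$ and $\reflection{i_1}\cdots\reflection{i_{\ell}} \in \red{w}$, Remark~\ref{rmk:foldingset}~(a) applies verbatim: any consecutive subexpression of a reduced word of an element of $\foldingset{\Weyl}$ again lies in $\foldingset{\Weyl}$. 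In particular $w' = \reflection{i_p}\cdots\reflection{i_q}$ is a reduced expression (being a consecutive subword of the reduced word $\reflection{i_1}\cdots\reflection{i_{\ell}}$) representing some element of $\foldingset{\Weyl}$, which we also call $w'$.

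For the second condition, the key point is that because $w \in \foldingset{\Weyl}$, the map $\widehat{\adm}$ on $\red{w}$ is just coordinatewise application of $\adm$ (Remark~\ref{rmk:foldingset}~(b)); in particular $\adm(\reflection{i_t}) \neq \adm(\reflection{i_{t+1}})$ for all $t$, so in the chosen reduced word there are no adjacent opposite simple reflections. Then by definition of $\lbar{\adm}$ (evaluated on the reduced expression $w' = \reflection{i_p}\cdots\reflection{i_q}$) we get
\begin{equation*}
\lbar{\adm}(w') = \adm(\reflection{i_p})\cdots\adm(\reflection{i_q}) = \Reflection{j_p}\cdots\Reflection{j_q}.
\end{equation*}
The right-hand side is a consecutive subword of the reduced expression $\Reflection{j_1}\cdots\Reflection{j_{\ell}}$ of $u$, hence is itself reduced and equals $u'$ by definition. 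So $\lbar{\adm}(w') = u'$. Combining the two conditions, $w'$ is a lifting of $u'$, which is the claim.

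The one point that deserves care — and the main (mild) obstacle — is the interplay of two length counts: we need $w'$ to actually be an element of $\Weyl^P$ if we want the conclusion in the parabolic structure-algebra language, and more basically we must be sure the reduced-word bookkeeping is consistent. But here the statement is purely group-theoretic (it does not mention $\Weyl^P$ or the structure algebras — it speaks of liftings of $\TWeyl$-elements in the sense of the last Definition before Example~\ref{egg:typeA4lifting}), so no parabolic hypothesis is needed: we only use that $w' \in \foldingset{\Weyl}$ and $\lbar{\adm}(w') = u'$, and both follow from the observations above. The length-preservation property of $\lbar{\adm}$ noted after \eqref{eq:phibar} then gives $\length{w'} = \tlength{u'} = q - p + 1$ automatically, so there is no hidden inconsistency. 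Thus the proof is essentially an application of Remark~\ref{rmk:foldingset}~(a)--(b) together with the fact that consecutive subwords of reduced words are reduced.
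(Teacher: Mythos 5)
Your proof is correct and follows the same route as the paper: both parts — $w' \in \foldingset{\Weyl}$ via Remark~\ref{rmk:foldingset}~(a) and $\lbar{\adm}(w') = u'$ by coordinatewise application of $\adm$ — are exactly how the paper argues, just stated more explicitly. The extra discussion of the parabolic setting is correct but unnecessary, since (as you note) the statement is purely group-theoretic.
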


\begin{proof}
Since $w'= \reflection{i_p} \cdots \reflection{i_q}$ is a consecutive subexpression of $w=\reflection{i_1} \cdots \reflection{i_{\ell}} \in \foldingset{\Weyl},$ $w'$ also belongs to $\foldingset{\Weyl}$ (Remark~\ref{rmk:foldingset} (a)). By construction, $\lbar{\adm}(w') = u'.$ Hence $w'$ is a lifting of $u'.$
\end{proof}

\begin{lemma} \label{prop:basicnonliftable}
Let $R, R' \in S_{\tau}.$ 
\begin{enumerate}[\normalfont (a)]
\item If $m(R, R')=4,$ then $u=[R, R']_4 = RR'RR'$ is not liftable.
\item If $m(R, R')=5,$ then $u=[R, R']_5=RR'RR'R$ is not liftable.
\end{enumerate}
\end{lemma}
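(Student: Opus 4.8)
The plan is to argue by contradiction using the combinatorial criterion of Theorem~\ref{prop:liftingcondition1} together with the structure of the $\adm$-preimages classified in Lemma~\ref{prop:philocal}. Suppose $u = [R,R']_m$ (with $m = m(R,R') \in \{4,5\}$) is liftable; then there is $w \in \foldingset{\Weyl}$ with $\lbar{\adm}(w) = u$, and $\length{w} = \tlength{u} = m$. Fix any $\Weyl$-reduced expression $w = \reflection{i_1}\cdots\reflection{i_m}$; since $w \in \foldingset{\Weyl}$, condition (FS1) forces $\adm(\reflection{i_1})\cdots\adm(\reflection{i_m})$ to be a $\TWeyl$-reduced word equal to $u = [R,R']_m$, so consecutive $\adm$-values differ and the sequence $\adm(\reflection{i_1}),\dots,\adm(\reflection{i_m})$ must be the alternating string $R, R', R, \dots$ (of length $m$). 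Hence each $\reflection{i_p}$ lies in $\adm\inv(R) \cup \adm\inv(R')$, i.e.\ in the full subgraph $\adm\inv(\{R,R'\})$ of the Coxeter diagram of $\Weyl$, which by Lemma~\ref{prop:philocal} is of type (C) when $m=4$ and of type (D) when $m=5$.

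Next I would pin down what $w$ can be. Writing $s, s^*$ for the two elements of $\adm\inv(R)$ and $t, t^*$ for those of $\adm\inv(R')$ (using Lemma~\ref{prop:philocal}(C),(D) and the $m(s,t)$ relations recorded there, together with Lemma~\ref{prop:m2}), the reduced word $\reflection{i_1}\cdots\reflection{i_m}$ is an alternating word in which each letter mapping to $R$ is $s$ or $s^*$ and each letter mapping to $R'$ is $t$ or $t^*$. So $w$ is obtained by choosing, for each of the $\lceil m/2\rceil$ slots labelled $R$ and each of the $\lfloor m/2\rfloor$ slots labelled $R'$, one of the two preimages. The key point is now a direct word-combinatorics check: for every such choice, the resulting element $w$ of $\Weyl$ actually admits \emph{another} reduced expression $\reflection{i'_1}\cdots\reflection{i'_m}$ on which $\adm$ produces a \emph{strictly shorter} $\TWeyl$-word — equivalently, a reduced expression containing two adjacent letters with the same $\adm$-image. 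This is essentially the computation already carried out inside the proof of Proposition~\ref{prop:phihat}: in cases (C) and (D) the enumerated lists there show that the only words in $\admred{\Weyl}$ mapping under $\widehat{\adm}$ to $RR'RR'$ (resp.\ $RR'RR'R$) are the ones exhibited, and appending the letter needed to realise the full alternating pattern of length $m=m(R,R')$ either breaks reducedness or leaves $\admred{\Weyl}$. Concretely, for (a) one checks the braid-equivalent forms of $w$ (using $s \sim s^*$ via the collapsing-part relations and the commutations from Lemma~\ref{prop:philocal}) to exhibit, in every case, a reduced expression of $w$ with an adjacency of opposite simple reflections; such a reduced word violates the Claim proved in Proposition~\ref{prop:c3rephrase2} (that every element of $\red{w}$ has no such adjacency when $w$ satisfies (C1)--(C3)), hence contradicts $w \in \foldingset{\Weyl}$.

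More cleanly, here is the argument I would actually write. Since $w \in \foldingset{\Weyl}$, by Theorem~\ref{prop:liftingcondition1} $w$ satisfies (C1)--(C3); by the Claim in the proof of Proposition~\ref{prop:c3rephrase2}, \emph{every} reduced expression of $w$ has no adjacency of a pair of opposite simple reflections, and in fact $\widehat{\adm}$ restricted to $\red{w}$ just applies $\adm$ letterwise (Remark~\ref{rmk:foldingset}(b)), so $\widehat{\adm}(\red{w})$ is a single alternating $\TWeyl$-word of length $m$. Now take the chosen reduced word $\reflection{i_1}\cdots\reflection{i_m}$ for $w$ landing on $[R,R']_m$; it lies in $\adm\inv(\{R,R'\})$, which is configuration (C) or (D) of Lemma~\ref{prop:philocal}. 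In either configuration, $w$ is an alternating word of length $m = m(R,R') \geq 4$ in the group generated by $\{s,s^*,t,t^*\}$ with the relations given there, so the subword consisting of the letters mapping to $R$ has length $\geq 2$, and hence $w$ involves at least two $R$-slots; writing out the braid relations (case $m=4$: the manipulation $ss^*tss^*t \sim s^*stss^*t \sim \dots$ already appears in Proposition~\ref{prop:embeddingbruhat}'s proof; case $m=5$: likewise $ss^*tt^*ss^*tt^*ss^* \sim \dots$) one produces a reduced expression of $w$ in which two consecutive letters both map to $R$ (they are $s$ and $s^*$, opposite simple reflections). That contradicts the no-adjacency property just recalled. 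Therefore no such $w$ exists, and $u = [R,R']_m$ is not liftable, proving (a) and (b). The main obstacle is organising the case analysis for $m=5$ so that it is not a brute-force enumeration: I expect to handle it by first reducing, via Lemma~\ref{prop:sublifting1}, to the situation where a length-$4$ consecutive subword already forces configuration (C)/(D) and then invoking part (a)'s contradiction on that subword, so that only the genuinely new length-$5$ pattern $[R,R']_5$ in type (D) needs the explicit braid computation.
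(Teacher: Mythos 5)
Your strategy --- suppose $w \in \foldingset{\Weyl}$ lifts $u$, observe that then every reduced expression of $w$ maps letterwise under $\adm$ to a reduced word for $u$ (hence has no adjacent opposite simple reflections), and derive a contradiction by exhibiting a reduced expression of $w$ \emph{with} such an adjacency --- is exactly the paper's. But you leave the decisive step as an assertion: you write ``one produces a reduced expression of $w$ in which two consecutive letters both map to $R$'' and cite the enumerations inside Propositions~\ref{prop:embeddingbruhat} and~\ref{prop:phihat}, yet those computations prove different statements (that $\embedding$ of a $\TWeyl$-reduced word is $\Weyl$-reduced; that $\widehat{\adm}$ of a word in $\admred{\Weyl}$ is $\TWeyl$-reduced) and do not hand you the claim you need. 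The paper closes this gap by a two-step argument you do not reproduce: first it uses Lemma~\ref{prop:sublifting1} on a length-$(\ell-1)$ prefix or suffix to pin $w$ down to a very short explicit list of candidates (for instance $w \in \{sts^*t,\ s^*tst\}$ in Case~1 of part~(a), and $w\in\{sts^*t^*s,\ sts^*t^*s^*\}$ in Case~1 of part~(b)), and then for each candidate it exhibits one explicit braid move that produces an $ss^*$ or $tt^*$ adjacency. That concrete case analysis is the content of the lemma and must be supplied. Two further corrections to your setup: in configuration~(C) of Lemma~\ref{prop:philocal} the full subgraph $\adm\inv(\{R,R'\})$ has only three vertices, so one of $\adm\inv(R)$, $\adm\inv(R')$ is a singleton, and your phrase ``$s, s^*$ for the two elements of $\adm\inv(R)$ and $t, t^*$ for those of $\adm\inv(R')$'' cannot both hold in that case; and your proposed reduction of~(b) to~(a) via Lemma~\ref{prop:sublifting1} does not work, because when $m(R,R')=5$ the length-$4$ prefix $[R,R']_4 = RR'RR'$ \emph{is} liftable (the paper's proof of~(b) identifies its unique lifting as $sts^*t^*$), so part~(a) gives no contradiction on that subword.
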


\begin{remark}
If $m(R, R') = 3,$ then $u=RR'R$ is always liftable (by the existence of a folding branch, see after Definition~\ref{def:foldingbranch}). 
\end{remark}

\begin{proof}
(a) \, The set $\red{u}$ consists of precisely two elements: $RR'RR'$ and $R'RR'R.$ By Lemma~\ref{prop:philocal}, in the Coxeter diagram, the configuration of the full subgraph $\adm\inv(\{R, R'\})$ is (C) (up to symmetry of $R$ and $R'$). Suppose, for the sake of contradiction, $w \in \foldingset{\Weyl}$ is a lifting of $u.$ \\[.1cm]
\textbf{Case~1:}\, $RR'RR' \in \widehat{\adm}\big(\red{w}\big).$ There exists $\mathbf{i}:=\reflection{i_1}\cdots\reflection{i_4} \in \red{w}$ such that
\begin{equation*}
\widehat{\adm}(\mathbf{i}) = \adm(\reflection{i_1}) \cdots \adm(\reflection{i_4}) = RR'RR'.
\end{equation*}
By Lemma \ref{prop:sublifting1}, $w'=\reflection{i_1}\reflection{i_2}\reflection{i_3}$ is a lifting of $u'=RR'R.$ Since $w' \in \foldingset{\Weyl},$ the only possibilities are $\reflection{i_1}\reflection{i_2}\reflection{i_3}=sts^*$ or $s^*ts.$ On the other hand, $\adm(\reflection{i_4})=R'$ forces $\reflection{i_4}=t.$ However, both $w=sts^*t$ and $w=s^*tst$ violate (FS2) of  Definition \ref{def:foldingset}. \\[.1cm]
\textbf{Case~2:}\, $R'RR'R \in \widehat{\adm}\big(\red{w}\big).$ There exists $\mathbf{i}:=\reflection{i_1}\cdots\reflection{i_4} \in \red{w}$ such that
\begin{equation*}
\widehat{\adm}(\mathbf{i}) = \adm(\reflection{i_1}) \cdots \adm(\reflection{i_4}) = R'RR'R.
\end{equation*}
By Lemma \ref{prop:sublifting1}, $w'=\reflection{i_2}\reflection{i_3}\reflection{i_4}$ is a lifting of $u'=RR'R.$ Since $w' \in \foldingset{\Weyl},$ the only possibilities are $\reflection{i_2}\reflection{i_3}\reflection{i_4}=sts^*$ or $s^*ts.$ On the other hand, $\adm(\reflection{i_1})=R'$ forces $\reflection{i_1}=t.$ However, $w=tsts^*$ and $w=ts^*ts$ both  violate (FS2) of  Definition \ref{def:foldingset}.

\noindent
(b) \, The set $\red{u}$ consists of precisely two elements: $RR'RR'R$ and $R'RR'RR'.$ By Lemma~\ref{prop:philocal}, in the Coxeter diagram, the configuration of the full subgraph $\adm\inv(\{R, R'\})$ is (D) (up to symmetry of $R$ and $R'$). Suppose, for the sake of contradiction, $w \in \foldingset{\Weyl}$ is a lifting of $u.$ \\[.1cm]
\textbf{Case~1:}\, $RR'RR'R \in \widehat{\adm}\big(\red{w}\big).$ There exists $\mathbf{i}:=\reflection{i_1}\cdots\reflection{i_5} \in \red{w}$ such that
\begin{equation*}
\widehat{\adm}(\mathbf{i}) = \adm(\reflection{i_1}) \cdots \adm(\reflection{i_5}) = RR'RR'R.
\end{equation*}
By Lemma \ref{prop:sublifting1}, $w'=\reflection{i_1}\reflection{i_2}\reflection{i_3}\reflection{i_4}$ is a lifting of $u'=RR'RR'.$ Since $w' \in \foldingset{\Weyl},$ the only possibility is $\reflection{i_1}\reflection{i_2}\reflection{i_3}\reflection{i_4}=sts^*t^*.$ On the other hand, $\adm(\reflection{i_5})=R$ forces $\reflection{i_5}=s$ or $s^*.$ However, $w=sts^*t^*s$ and $w=sts^*t^*s^*$ both  violate (FS2) of  Definition \ref{def:foldingset}. \\[.1cm]
\textbf{Case~2:}\, $R'RR'RR' \in \widehat{\adm}\big(\red{w}\big).$ We may argue in exactly the same way as in Case~1.
\end{proof}

\begin{proposition} \label{prop:sublifting}
Let $u \in \TWeyl$ with $\tlength{u}={\ell}$ and let $w \in \foldingset{\Weyl}$ be a lifting of $u.$ Then for every $ \Reflection{j_1} \cdots \Reflection{j_{\ell}} \in \red{u},$ there exists $\reflection{i_1} \cdots \reflection{i_{\ell}} \in \red{w}$ such that $\adm(\reflection{i_p})=\Reflection{j_p}$ for each $1 \leq p \leq \ell.$
\end{proposition}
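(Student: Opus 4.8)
The plan is to prove the statement by induction on $\ell = \tlength{u}$, pivoting through Lemma~\ref{prop:sublifting1} in the inductive step. For the base case $\ell=0$ there is nothing to prove, and for $\ell=1$ the claim is immediate from the definition of $\lbar{\adm}$ together with $w \in \foldingset{\Weyl}$. So fix $u$ with $\tlength{u}=\ell \geq 2$, a lifting $w \in \foldingset{\Weyl}$, and an arbitrary reduced word $\Reflection{j_1}\cdots\Reflection{j_{\ell}} \in \red{u}$. Since $w \in \foldingset{\Weyl}$ is a lifting of $u$, Theorem~\ref{prop:liftingcondition1} gives $\lbar{\adm}(w)=u$, so $w$ satisfies \normalfont{(C1)--(C3)}; in particular $\length{w}=\ell$ and $w \leq \embedding(u)$. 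Applying $\embedding$ to the given reduced word and using Proposition~\ref{prop:embeddingbruhat}, the word $\embedding(\Reflection{j_1})\cdots\embedding(\Reflection{j_{\ell}})$ is a reduced expression for $\embedding(u)$ in $\Weyl$, so by Lemma~\ref{prop:coxeter}~(b), $w$ admits some reduced expression $\reflection{i_1}\cdots\reflection{i_{\ell}}$ that is a subexpression of it. Because $w \in \foldingset{\Weyl}$ we have $\adm(\reflection{i_p})\neq\adm(\reflection{i_{p+1}})$ for all $p$, so Lemma~\ref{prop:phi} tells us $\adm(\reflection{i_1})\cdots\adm(\reflection{i_{\ell}})$ is a subexpression of $\Reflection{j_1}\cdots\Reflection{j_{\ell}}$; matching lengths forces $\adm(\reflection{i_p})=\Reflection{j_p}$ for every $p$. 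This already proves the proposition outright, without induction.

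Actually, let me streamline: the above paragraph is a direct argument, and I would present it as such rather than inducting. The skeleton is: (1) invoke Theorem~\ref{prop:liftingcondition1} to translate ``$w$ is a lifting of $u$'' into $w \in \foldingset{\Weyl}$, $\length{w}=\ell$, $w\leq\embedding(u)$; (2) use Proposition~\ref{prop:embeddingbruhat} on the chosen reduced word $\Reflection{j_1}\cdots\Reflection{j_{\ell}}$ to get a reduced word for $\embedding(u)$; (3) apply Lemma~\ref{prop:coxeter}~(b) (subword property) to extract a reduced expression $\reflection{i_1}\cdots\reflection{i_{\ell}}$ of $w$ sitting inside it; (4) use the defining property (FS1) of $\foldingset{\Weyl}$ (equivalently, the claim established inside the proof of Proposition~\ref{prop:c3rephrase2}) to see $\adm(\reflection{i_p})\neq\adm(\reflection{i_{p+1}})$ for consecutive $p$; (5) apply Lemma~\ref{prop:phi} to push the subword through $\adm$, obtaining $\adm(\reflection{i_1})\cdots\adm(\reflection{i_{\ell}})$ as a subexpression of $\Reflection{j_1}\cdots\Reflection{j_{\ell}}$; (6) observe both have exactly $\ell$ letters, so the subexpression must be the whole word, giving $\adm(\reflection{i_p})=\Reflection{j_p}$ termwise.

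The one point requiring a little care — and what I'd flag as the only real obstacle — is step (4): one must know that \emph{every} reduced expression of $w$, in particular the one produced by the subword extraction in step (3), has no adjacent pair of opposite simple reflections (i.e.\ $\adm(\reflection{i_p})\neq\adm(\reflection{i_{p+1}})$). This is precisely the Claim proved at the start of the proof of Proposition~\ref{prop:c3rephrase2}, which follows from $w$ satisfying \normalfont{(C1)--(C3)} via Propositions~\ref{prop:phihat} and \ref{prop:embeddingbruhat}; alternatively it is built into (FS1) once one recalls Remark~\ref{rmk:foldingset}~(b). Either way it is available to us, so the proof is short. I would therefore write: ``By Theorem~\ref{prop:liftingcondition1}, $w\in\foldingset{\Weyl}$ and $\lbar{\adm}(w)=u$; in particular $\length{w}=\ell$ and $w\leq\embedding(u)$. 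Given $\Reflection{j_1}\cdots\Reflection{j_{\ell}}\in\red{u}$, the expression $\embedding(\Reflection{j_1})\cdots\embedding(\Reflection{j_{\ell}})$ is $\Weyl$-reduced by Proposition~\ref{prop:embeddingbruhat}, so by Lemma~\ref{prop:coxeter}~(b), $w$ has a reduced expression $\reflection{i_1}\cdots\reflection{i_{\ell}}$ that is a subexpression of it. Since $w\in\foldingset{\Weyl}$, we have $\adm(\reflection{i_p})\neq\adm(\reflection{i_{p+1}})$ for all $1\leq p\leq\ell-1$, so by Lemma~\ref{prop:phi}, $\adm(\reflection{i_1})\cdots\adm(\reflection{i_{\ell}})$ is a subexpression of $\Reflection{j_1}\cdots\Reflection{j_{\ell}}$. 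As both words have length $\ell$, we conclude $\adm(\reflection{i_p})=\Reflection{j_p}$ for each $1\leq p\leq\ell$.'' This is essentially verbatim the argument already used in the proof of Theorem~\ref{prop:liftingcondition1}, now run with an arbitrary chosen reduced word of $u$ rather than a fixed one.
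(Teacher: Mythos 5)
Your proof is correct, and it takes a genuinely different and substantially simpler route than the paper's. The paper proves Proposition~\ref{prop:sublifting} by contradiction: it assumes $\widehat{\adm}(\red{w}) \subsetneq \red{u}$, invokes the Matsumoto--Tits connectivity of $\red{u}$ under braid moves to locate a minimal failure point $\mathbf{j}_k \ra \mathbf{j}_{k+1}$, and then runs a fairly intricate case analysis on $m(\Reflection{j'_p},\Reflection{j'_{p+1}}) \in \{2,3,4,5\}$ using Lemmas~\ref{prop:m2}, \ref{prop:philocal}, \ref{prop:basicnonliftable} and \ref{prop:sublifting1} to rule out each braid-move type. Your proof instead observes that the argument already written in the forward direction of Theorem~\ref{prop:liftingcondition1} (get $w \leq \embedding(u)$ from (C1)--(C3), form the $\Weyl$-reduced word $\embedding(\Reflection{j_1})\cdots\embedding(\Reflection{j_{\ell}})$ via Proposition~\ref{prop:embeddingbruhat}, extract a reduced subexpression of $w$ by Lemma~\ref{prop:coxeter}(b), transport it through $\adm$ by Lemma~\ref{prop:phi}, and match lengths) works unchanged for an arbitrary reduced word of $u$, not just a fixed one --- which is exactly what the proposition asserts. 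Since all the ingredients you invoke (Theorem~\ref{prop:liftingcondition1}, Proposition~\ref{prop:embeddingbruhat}, Lemma~\ref{prop:phi}, Lemma~\ref{prop:coxeter}(b), and the fact that (FS1) forbids adjacent equal $\adm$-images in any reduced word of $w \in \foldingset{\Weyl}$, cf.\ Remark~\ref{rmk:foldingset}(b)) are established before Proposition~\ref{prop:sublifting}, there is no circularity. The trade-off: your argument is shorter, direct, and essentially free given what has come before; the paper's braid-move argument is self-contained in a different way and more explicitly exhibits the local structure of how $\widehat{\adm}$ interacts with $\TWeyl$-braid relations, which is also the kind of analysis the author uses elsewhere (e.g.\ Proposition~\ref{prop:phihat}). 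Both are correct; yours is the cleaner route to this particular statement.
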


\begin{proof}
We first remark that the last part of the claim is equivalent to $\widehat{\adm}(\reflection{i_1}\cdots\reflection{i_{\ell}})=\Reflection{j_1}\cdots\Reflection{j_{\ell}}$ (Remark~\ref{rmk:foldingset} (b)).
Let $\mathbf{i} \in \red{w}.$ Assume $\widehat{\adm}\big(\red{w}\big) \subsetneq \red{u}$ and let $\mathbf{j} \in \red{u} \backslash \widehat{\adm}\big(\red{w}\big).$ Since both $\widehat{\adm}(\mathbf{i})$ and $\mathbf{j}$ belong to $\red{u},$ there exists a finite sequence of $\TWeyl$-braid moves that transforms $\widehat{\adm}(\mathbf{i})$ into $\mathbf{j},$ and the corresponding sequence $\widehat{\adm}(\mathbf{i}) = \mathbf{j}_0 \ra \mathbf{j}_1 \ra \cdots \ra \mathbf{j}_r=\mathbf{j}$ of elements of $\red{u}.$ Since $\widehat{\adm}(\mathbf{i}) \in \widehat{\adm}\big(\red{w}\big)$ and $\mathbf{j} \notin \widehat{\adm}\big(\red{w}\big),$ there exists $0 \leq k \leq r-1$ such that $\mathbf{j}_k \in \widehat{\adm}\big(\red{w}\big)$ and $\mathbf{j}_{k+1} \notin \widehat{\adm}\big(\red{w}\big).$ Let us write $\mathbf{j}_k=\widehat{\adm}(\mathbf{i'})$ for some $\mathbf{i'}=\reflection{i'_1}\cdots\reflection{i'_{\ell}} \in \red{w},$ and set $\Reflection{j'_q}:=\adm(\reflection{i'_q})$ for each $1 \leq q \leq \ell.$
We will argue that in the $\TWeyl$-braid move $[\Reflection{j'_p}\Reflection{j'_{p+1}}]_m=[\Reflection{j'_{p+1}}\Reflection{j'_p}]_m$ that transforms $\mathbf{j}_k$ into $\mathbf{j}_{k+1},$ $m$ cannot be equal to $2$ or $3.$ After that, we show that $m \geq 4$ is also not possible, yielding a contradiction to our hypothesis.

The case of $m=2$ is discarded as follows. By Lemma~\ref{prop:m2}, $m=2$ would imply that any $s \in \adm\inv(\Reflection{j'_p})$ and $t \in \adm\inv(\Reflection{j'_{p+1}})$ commute. Hence, there is a corresponding $\Weyl$-braid move $\reflection{i'_p}\reflection{i'_{p+1}}=\reflection{i'_{p+1}}\reflection{i'_p},$ which transforms $\mathbf{i'}$ to another element $\mathbf{i''} \in \red{w}$ such that $\widehat{\adm}(\mathbf{i''})=\mathbf{j}_{k+1}.$ This contradicts $\mathbf{j}_{k+1} \notin \widehat{\adm}\big(\red{w}\big).$
The case of $m=3$ is discarded as follows. Set $R= \Reflection{j'_p}$ and $R'=\Reflection{j'_{p+1}}.$ By Lemma~\ref{prop:philocal}, there are two possible configurations of the full subgraph $\adm\inv(\{R, R'\}),$ namely (A) and (B). In (A), there is a corresponding $\Weyl$-braid move $[\reflection{i'_p}\reflection{i'_{p+1}}]_3=[\reflection{i'_{p+1}}\reflection{i'_p}]_3,$ which transforms $\mathbf{i'}$ to  another element $\mathbf{i''} \in \red{w}$ such that $\widehat{\adm}(\mathbf{i''})=\mathbf{j}_{k+1},$ contradicting $\mathbf{j}_{k+1} \notin \widehat{\adm}\big(\red{w}\big).$
In (B), the possible $\widehat{\adm}$-preimages of the word $[RR']_3$ consisting of $3$ simple reflections are: $sts, sts^*, st^*s^*, s^*t^*s^*, s^*t^*s, s^*ts,$ among which only $sts$ and $s^*t^*s^*$ are compatible with $w \in \foldingset{\Weyl}.$
Hence, there is a corresponding $\Weyl$-braid move $[\reflection{i'_p}\reflection{i'_{p+1}}]_3=[\reflection{i'_{p+1}}\reflection{i'_p}]_3,$ which transforms $\mathbf{i'}$ to  another element $\mathbf{i''} \in \red{w}$ such that $\widehat{\adm}(\mathbf{i''})=\mathbf{j}_{k+1}.$ This contradicts $\mathbf{j}_{k+1} \notin \widehat{\adm}\big(\red{w}\big).$ 
As for the case $m \geq 4,$ we argue as follows. Set $R= \Reflection{j'_p}$ and $R'=\Reflection{j'_{p+1}}.$ If $m=4,$ by Lemma \ref{prop:basicnonliftable} (a),  $u'=[RR']_4$ is not liftable. Since $\mathbf{j}_k \in \widehat{\adm}(\red{w}),$ this is a contradiction to Lemma \ref{prop:sublifting1}. If $m=5,$ by Lemma \ref{prop:basicnonliftable} (b), $u''=[RR']_5$ is not liftable. Since $\mathbf{j}_k \in \widehat{\adm}(\red{w}),$ this is a contradiction to Lemma \ref{prop:sublifting1}.
By Lemma~\ref{prop:philocal}, the cases $m \geq 6$ do not exist. This completes the proof.
\end{proof}
\noindent
By a direct application of Lemma \ref{prop:sublifting1} and Proposition \ref{prop:sublifting}, we obtain

\begin{corollary} \label{prop:noliftingcondition}
Let $u \in \TWeyl$ with $\tlength{u}={\ell}.$ If $u$ admits a $\TWeyl$-reduced expression $u = \Reflection{j_1} \cdots \Reflection{j_{\ell}}$ which has a consecutive subexpression $u'= \Reflection{j_p}\Reflection{j_{p+1}}\cdots \Reflection{j_q} \, (1 \leq p \leq q \leq \ell)$ such that $u'$ is not liftable, then $u$ is not liftable.
\end{corollary}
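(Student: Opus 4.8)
The plan is to prove the contrapositive directly from the two structural results just established, Lemma~\ref{prop:sublifting1} and Proposition~\ref{prop:sublifting}. Suppose that $u \in \TWeyl$ admits a reduced expression $u = \Reflection{j_1}\cdots\Reflection{j_{\ell}}$ with a consecutive subexpression $u' = \Reflection{j_p}\cdots\Reflection{j_q}$ that is not liftable, and suppose for contradiction that $u$ is liftable, say $w \in \foldingset{\Weyl}$ is a lifting of $u$ with $\lbar{\adm}(w) = u$.

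First I would invoke Proposition~\ref{prop:sublifting} applied to the given reduced word $\Reflection{j_1}\cdots\Reflection{j_{\ell}} \in \red{u}$: since $w$ is a lifting of $u$, there exists a reduced expression $\reflection{i_1}\cdots\reflection{i_{\ell}} \in \red{w}$ with $\adm(\reflection{i_m}) = \Reflection{j_m}$ for every $1 \leq m \leq \ell$. This is exactly the hypothesis needed to run Lemma~\ref{prop:sublifting1}. Applying that lemma to the indices $p \leq q$, the consecutive subexpression $u' = \Reflection{j_p}\cdots\Reflection{j_q}$ lifts to $w' := \reflection{i_p}\cdots\reflection{i_q} \in \foldingset{\Weyl}$, which contradicts the assumption that $u'$ is not liftable.

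Hence no such lifting $w$ can exist, i.e.\ $u$ is not liftable, which is the desired statement. The only genuine content here is that Proposition~\ref{prop:sublifting} is precisely designed to upgrade ``$w$ lifts $u$'' to ``for \emph{every} chosen reduced word of $u$ there is a compatible reduced word of $w$'', so that Lemma~\ref{prop:sublifting1} can then be applied to the particular reduced word of $u$ exhibited in the hypothesis. There is no real obstacle — the corollary is a formal consequence of the two preceding results — but one should be careful that Lemma~\ref{prop:sublifting1} requires the compatibility $\adm(\reflection{i_m}) = \Reflection{j_m}$ along the specific reduced expression in which the bad subword $u'$ appears, and it is exactly this that Proposition~\ref{prop:sublifting} supplies.

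\begin{proof}
Suppose, for contradiction, that $u$ is liftable, and let $w \in \foldingset{\Weyl}$ be a lifting of $u$. Fix the $\TWeyl$-reduced expression $u = \Reflection{j_1}\cdots\Reflection{j_{\ell}}$ in the statement. By Proposition~\ref{prop:sublifting}, there exists a $\Weyl$-reduced expression $\reflection{i_1}\cdots\reflection{i_{\ell}} \in \red{w}$ such that $\adm(\reflection{i_m})=\Reflection{j_m}$ for each $1 \leq m \leq \ell$. Applying Lemma~\ref{prop:sublifting1} to the indices $p \leq q$, the consecutive subexpression $u' = \Reflection{j_p}\Reflection{j_{p+1}}\cdots\Reflection{j_q}$ lifts to $w' := \reflection{i_p}\reflection{i_{p+1}}\cdots\reflection{i_q} \in \foldingset{\Weyl}$. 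This contradicts the hypothesis that $u'$ is not liftable. Therefore $u$ is not liftable.
\end{proof}
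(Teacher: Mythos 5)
Your proof is correct and matches the paper's approach exactly: the paper itself states the corollary is a direct application of Lemma~\ref{prop:sublifting1} and Proposition~\ref{prop:sublifting}, and you have simply spelled out how Proposition~\ref{prop:sublifting} supplies the compatible reduced word of $w$ along the given reduced word of $u$, so that Lemma~\ref{prop:sublifting1} can produce a lifting of the consecutive subexpression $u'$, yielding the contradiction.
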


\begin{proposition} \label{prop:basicnonliftable2}
The following elements of $\TWeyl$ are not liftable:
\begin{enumerate}[\normalfont (I)]
\item Type $A_{2n-1}$ to $C_n:$
\begin{inparaenum}[\normalfont (a)]
\item $\Reflection{n}\Reflection{n-1}\Reflection{n},$
\item $\Reflection{n-1}\Reflection{n-2}\Reflection{n-1}\Reflection{n}\Reflection{n-1},$
\item $\Reflection{n-1}\Reflection{n}\Reflection{n-1}\Reflection{n}.$
\end{inparaenum}
\item Type $D_{n+1}$ to $B_n:$
\begin{inparaenum}[\normalfont (a)]
\item $\Reflection{n-1}\Reflection{n}\Reflection{n-1},$
\item $\Reflection{n-1}\Reflection{n}\Reflection{n-1}\Reflection{n}.$
\end{inparaenum}
\item Type $E_6$ to $F_4:$
\begin{inparaenum}[\normalfont (a)]
\item $\Reflection{2}\Reflection{3}\Reflection{2},$
\item $\Reflection{3}\Reflection{4}\Reflection{3}\Reflection{2}\Reflection{3},$
\item $\Reflection{2}\Reflection{3}\Reflection{2}\Reflection{3}.$
\end{inparaenum}
\item Type $E_8$ to $H_4:$
\begin{inparaenum}[\normalfont (a)]
\item $\Reflection{3}\Reflection{2}\Reflection{3}\Reflection{4}\Reflection{3},$
\item $\Reflection{3}\Reflection{4}\Reflection{3}\Reflection{4}\Reflection{3}.$
\end{inparaenum}
\item Type $D_6$ to $H_3:$
\begin{inparaenum}[\normalfont (a)]
\item $\Reflection{2}\Reflection{1}\Reflection{2}\Reflection{3}\Reflection{2},$
\item $\Reflection{2}\Reflection{3}\Reflection{2}\Reflection{3}\Reflection{2}.$
\end{inparaenum}
\item Type $A_4$ to $H_2:$
\begin{inparaenum}[\normalfont (a)]
\item $\Reflection{1}\Reflection{2}\Reflection{1}\Reflection{2}\Reflection{1}.$
\end{inparaenum}
\end{enumerate}
\end{proposition}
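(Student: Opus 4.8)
This is a finite verification: the elements listed split into three groups handled by three mechanisms. The plan is to dispatch the straightforward group by citing Lemma~\ref{prop:basicnonliftable} directly, handle the three length-three elements by a short argument in the spirit of that lemma, and treat the three remaining length-five elements by a localized enumeration built on Lemma~\ref{prop:sublifting1} and Proposition~\ref{prop:sublifting}.

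\textbf{The braid-word elements.} Reading off the Coxeter diagrams of Section~\ref{app:folding}, the elements (I)(c), (II)(b) and (III)(c) are each of the form $[R,R']_4=RR'RR'$ where the pair $R,R'$ spans the doubly bonded edge of $C_n$, $B_n$, $F_4$ respectively, so $m(R,R')=4$; and (IV)(b), (V)(b), (VI)(a) are each of the form $[R,R']_5=RR'RR'R$ where $R,R'$ spans the $5$-bonded edge of $H_4$, $H_3$, $H_2$, so $m(R,R')=5$. For these, non-liftability is exactly Lemma~\ref{prop:basicnonliftable}(a), resp.\ (b).

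\textbf{The length-three elements.} For (I)(a), (II)(a), (III)(a) I would argue as in the proof of Lemma~\ref{prop:basicnonliftable}. In each case $u=RR'R$ with $m(R,R')=4$, and the diagrams show that $\adm^{-1}(\{R,R'\})$ is configuration (C) of Lemma~\ref{prop:philocal}, in which $R=\adm(s)$ has the \emph{single} preimage $s$ (the branch vertex of $A_{2n-1}$, of $D_{n+1}$, of $E_6$, respectively) while $\adm^{-1}(R')=\{t,t^*\}$, with $m(s,t)=m(s,t^*)=3$. Since $\red{u}=\{RR'R,\,R'RR'\}$, a lifting $w$ would — by Proposition~\ref{prop:sublifting} applied to the reduced word $RR'R$ — admit a reduced expression $\reflection{i_1}\reflection{i_2}\reflection{i_3}\in\red{w}$ with $\adm(\reflection{i_1})=\adm(\reflection{i_3})=R$, $\adm(\reflection{i_2})=R'$; uniqueness of the preimage of $R$ forces $w=sts$ or $w=st^*s$. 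But then $sts=tst$ and $st^*s=t^*st^*$ (as $m(s,t)=m(s,t^*)=3$), and applying $\adm$ letterwise to these alternative reduced words yields $R'RR'\neq RR'R$ in $\TWeyl$, since $m(R,R')=4$; hence $w$ violates (FS2) of Definition~\ref{def:foldingset}, contradicting $w\in\foldingset{\Weyl}$. (Note that $R'RR'$ \emph{is} liftable here, via $tst^*$, so one cannot reach these through Corollary~\ref{prop:noliftingcondition}.)

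\textbf{The length-five elements, and the main obstacle.} The remaining elements (I)(b), (IV)(a), (V)(a) all have the common shape $u=babcb$, where $a,b,c$ abbreviate the three simple reflections involved, with $m(a,b)=3$, $m(b,c)\in\{4,5\}$, $m(a,c)=2$, and $\red{u}=\{babcb,\,abacb,\,abcab\}$. The plan: (1) first determine the liftings of the middle length-three subword $bcb=[b,c]_3$ exactly as in the previous paragraph — since $\red{bcb}$ is the singleton $\{bcb\}$, a short check over the words $\reflection{j_1}\reflection{j_2}\reflection{j_3}$ that $\adm$ carries to $bcb$ shows that only the two ``mixed'' ones survive in $\foldingset{\Weyl}$ (all others are either non-reduced, because two chosen preimages commute, or fail (FS2) via the dihedral inequality $bcb\neq cbc$); (2) if $w\in\foldingset{\Weyl}$ lifts $u$, then Proposition~\ref{prop:sublifting} supplies a reduced expression of $w$ mapping letterwise onto $babcb$, whose final three letters, by Lemma~\ref{prop:sublifting1}, form a lifting of $bcb$ and hence are one of those two mixed words; (3) this pins $w$ down to a short list of candidates lying inside the relevant type-$A$ (for (I)(b)) or type-$D$ (for (IV)(a), (V)(a)) subdiagram of the Coxeter graph of $\Weyl$, and for each surviving candidate I would produce, by braid moves inside that subdiagram, a second reduced expression of $w$ whose letterwise $\adm$-image is either not reduced (so $w$ fails (FS1)) or equals $abcba$ or $abcbc$ — and $abcba\neq u$, $abcbc\neq u$ already inside the rank-three parabolic of type $B_3$, resp.\ $H_3$ — so $w$ fails (FS2). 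In every case $w\notin\foldingset{\Weyl}$, the desired contradiction. The real work, and the main obstacle, is entirely in this last group: correctly extracting from the folding diagrams which vertex of each collapsing pair carries a single preimage and the exact shape of its neighbourhood in $\Weyl$, then running the candidate enumeration to completion while watching for the two disqualifying phenomena (length collapse from a commuting pair of preimages versus a bad $\adm$-image). The clean facts that close everything are the one-line reduced-word comparisons $abcba\neq u$ and $abcbc\neq u$ in the appropriate rank-three parabolic.
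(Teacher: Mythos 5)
Your decomposition of the list is the same as the paper's, and the three mechanisms you propose — citing Lemma~\ref{prop:basicnonliftable} for the braid words, the configuration-(C) argument for the length-three elements, and the two-block ($ba$ followed by $bcb$) enumeration via Lemma~\ref{prop:sublifting1} and Proposition~\ref{prop:sublifting} for the length-five elements — match the paper's proof exactly. But there is a gap in the accounting: you have omitted \textup{(III)(b)}, the element $\Reflection{3}\Reflection{4}\Reflection{3}\Reflection{2}\Reflection{3}$ in type $E_6\to F_4$. It is not covered by your first two groups, and you do not list it among ``the remaining elements.'' It does fit your pattern $u=babcb$ with $b=\Reflection{3}$, $a=\Reflection{4}$, $c=\Reflection{2}$, $m(a,b)=3$, $m(b,c)=4$, $m(a,c)=2$, with $c$ having a unique $\adm$-preimage — i.e., it is exactly the $m(b,c)=4$ companion to (I)(b), which is why the paper groups (I)(b) and (III)(b) together and (IV)(a), (V)(a) together. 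So the fix is to add (III)(b) to your length-five group; the argument you sketch then applies verbatim, with the candidate enumeration taking place inside the $A_5$ subdiagram $\{\reflection{1},\reflection{3},\reflection{4},\reflection{5},\reflection{6}\}$ of $E_6$.

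Beyond the omission, your outline for the length-five cases is correct but stops short of the actual work: you describe the candidate list (four products of the two liftings of $ba$ with the two mixed liftings of $bcb$) and promise to exhibit for each a second reduced expression whose letterwise $\adm$-image betrays an (FS2) failure, but you explicitly defer the enumeration. The paper carries this out: it writes down the four candidates $v_1,\dots,v_4$, applies a braid move to each, and locates a commuting pair $\reflection{i}\reflection{j}$ in $\Weyl$ whose images $\adm(\reflection{i}),\adm(\reflection{j})$ do not commute in $\TWeyl$. Your alternative framing (compare the full $\adm$-image of the alternate reduced word against $u$) is logically equivalent and would close the argument, but it needs to be executed rather than announced before the proof is complete.
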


\begin{proof}
(I)(a), (II)(a), (III)(a): Proof for these three cases follows the same argument as one can see from the folding diagrams in Section~\ref{app:folding}. We present here a proof for (I)(a).
Set $u = \Reflection{n}\Reflection{n-1}\Reflection{n}.$ If $u$ is liftable, then there exists some $w=\reflection{i_1}\reflection{i_2}\reflection{i_3} \in \foldingset{\Weyl}$ such that $\adm(\reflection{i_1})=\Reflection{n}, \, \adm(\reflection{i_2}) = \Reflection{n-1}$ and $\adm(\reflection{i_3})=\Reflection{n}$ (Proposition~\ref{prop:sublifting}). There are precisely two candidates: $w_1=\reflection{n}\reflection{n-1}\reflection{n}$ and $w_2=\reflection{n}\reflection{n+1}\reflection{n}.$ Observe that $\reflection{n}\reflection{n-1}\reflection{n}=\reflection{n-1}\reflection{n}\reflection{n-1}$ in $\Weyl,$ but 
\begin{equation*}
\adm(\reflection{n})\adm(\reflection{n-1})\adm(\reflection{n}) =\Reflection{n}\Reflection{n-1}\Reflection{n} \neq \Reflection{n-1}\Reflection{n}\Reflection{n-1} = \adm(\reflection{n-1})\adm(\reflection{n})\adm(\reflection{n-1}).
\end{equation*}
Hence $w_1 \notin \foldingset{\Weyl}.$ Similarly, $w_2 \notin \foldingset{\Weyl}.$

\noindent
(I)(b) and (III)(b): Proof for these two cases follows the same argument. We present here a proof for (I)(b). Set $u = \Reflection{n-1}\Reflection{n-2}\Reflection{n-1}\Reflection{n}\Reflection{n-1}.$ If $u$ is liftable, then by Proposition \ref{prop:sublifting}, there exists some $w=\reflection{i_1}\cdots \reflection{i_5} \in \foldingset{\Weyl}$ such that $\adm(\reflection{i_p})=\Reflection{j_p}$ for each $1 \leq p \leq 5.$ By Lemma \ref{prop:sublifting1}, $\reflection{i_1}\reflection{i_2}$ is a lifting of $u_1=\Reflection{n-1}\Reflection{n-2}$ and $\reflection{i_3}\reflection{i_4}\reflection{i_5}$ is a lifting of $u_2 = \Reflection{n-1}\Reflection{n}\Reflection{n-1}.$ The element $u_1$ has two liftings $w_1 = \reflection{n-1}\reflection{n-2}$ and $w_1' = \reflection{n+1}\reflection{n+2},$ and $u_2$ also has two liftings $w_2 = \reflection{n-1}\reflection{n}\reflection{n+1}$ and $w_2' = \reflection{n+1}\reflection{n}\reflection{n-1}.$ Thus there are four candidates for $w$:
\begin{align*}
v_1 = w_1w_2 &= \reflection{n-1}\reflection{n-2} \reflection{n-1}\reflection{n}\reflection{n+1} = \reflection{n-2}\reflection{n-1}\reflection{n}\reflection{n-2}\reflection{n+1},\\
v_2 = w_1w_2' &= \reflection{n-1}\reflection{n-2}\reflection{n+1}\reflection{n}\reflection{n-1}, \\
v_3 = w_1'w_2 &= \reflection{n+1}\reflection{n+2}\reflection{n-1}\reflection{n}\reflection{n+1}, \\
v_4 = w_1'w_2' &=  \reflection{n+1}\reflection{n+2}\reflection{n+1}\reflection{n}\reflection{n-1} = \reflection{n+2}\reflection{n+1}\reflection{n}\reflection{n+2}\reflection{n-1}.
\end{align*}
Notice that a reduced expression of $v_1$ and $v_2$ contains a consecutive subexpression $\reflection{n-2}\reflection{n+1}$ and observe that $\reflection{n-2}\reflection{n+1}=\reflection{n+1}\reflection{n-2}$ in $\Weyl$ but $\Reflection{n-2}\Reflection{n-1} \neq \Reflection{n-1}\Reflection{n-2}$ in $\TWeyl.$ Hence $v_1, v_2 \notin \foldingset{\Weyl}.$ A similar argument shows $v_3, v_4 \notin \foldingset{\Weyl}$ (look at the subexpression $\reflection{n+2}\reflection{n-1}$). Hence $u$ is not liftable.

\noindent
(I)(c), (II)(b), (III)(c) follow from Lemma \ref{prop:basicnonliftable} (a).

\noindent
(IV)(a) and (V)(a): Proof for these two cases follows the same argument as one can see from the folding diagrams in Section~\ref{app:folding}. We present here a proof for (IV)(a).  Set $u = \Reflection{3}\Reflection{2}\Reflection{3}\Reflection{4}\Reflection{3}.$ If $u$ is liftable, then by Proposition \ref{prop:sublifting}, there exists some $w=\reflection{i_1}\cdots \reflection{i_5} \in \foldingset{\Weyl}$ such that $\adm(\reflection{i_p})=\Reflection{j_p}$ for each $1 \leq p \leq 5.$ By Lemma \ref{prop:sublifting1}, $\reflection{i_1}\reflection{i_2}$ is a lifting of $u_1=\Reflection{3}\Reflection{2}$ and $\reflection{i_3}\reflection{i_4}\reflection{i_5}$ is a lifting of $u_2 = \Reflection{3}\Reflection{4}\Reflection{3}.$ The element $u_1$ has two liftings $w_1 = \reflection{3}\reflection{2}$ and $w_1' = \reflection{5}\reflection{6},$ and $u_2$ also has two liftings $w_2 = \reflection{3}\reflection{4}\reflection{5}$ and $w_2' = \reflection{5}\reflection{4}\reflection{3}.$ Thus there are four candidates for $w$:
\begin{align*}
v_1 = w_1w_2 &= \reflection{3}\reflection{2} \reflection{3}\reflection{4}\reflection{5} = \reflection{2}\reflection{3}\reflection{4}\reflection{2}\reflection{5},\\
v_2 = w_1w_2' &= \reflection{3}\reflection{2}\reflection{5}\reflection{4}\reflection{3}, \\
v_3 = w_1'w_2 &= \reflection{5}\reflection{6}\reflection{3}\reflection{4}\reflection{5}, \\
v_4 = w_1'w_2' &=  \reflection{5}\reflection{6}\reflection{5}\reflection{4}\reflection{3} = \reflection{6}\reflection{5}\reflection{4}\reflection{6}\reflection{3}.
\end{align*}
Notice that reduced expressions of $v_1$ and $v_2$ contain a consecutive subexpression $\reflection{2}\reflection{5}$ and observe that $\reflection{2}\reflection{5}=\reflection{5}\reflection{2}$ in $\Weyl$ but $\Reflection{2}\Reflection{3} \neq \Reflection{3}\Reflection{2}$ in $\TWeyl.$ Hence $v_1, v_2 \notin \foldingset{\Weyl}.$ A similar argument shows $v_3, v_4 \notin \foldingset{\Weyl}$ (look at the subexpression $\reflection{6}\reflection{3}$). Hence $u$ is not liftable.

\noindent
(IV)(b), (V)(b) and (VI)(a) follow from Lemma \ref{prop:basicnonliftable} (b).
\end{proof}

Using Corollary \ref{prop:noliftingcondition} and Proposition \ref{prop:basicnonliftable2}, we can detect less trivial nonliftable elements of $\TWeyl$ as in the following example.

\begin{example}[Type $E_8 / D_6$ to $H_4 / H_3$] \label{egg:typeE8lifting}
Let $\Root \rightsquigarrow \Root_{\tau}$ be a folding of the root system of type $E_8$ to $H_4.$ See the folding diagram in Section~\ref{app:folding}. 
We take the parabolic subgroup $\Weyl_P := \langle \reflection{2}, \reflection{3}, \reflection{4}, \reflection{5}, \reflection{6}, \reflection{8} \rangle,$ which is of type $D_6.$ 
We have $(\Weyl_P)_{\tau}= \langle \Reflection{2}, \Reflection{3}, \Reflection{4}\rangle,$ which is a parabolic subgroup of $\TWeyl$ of type $H_3.$ Below is an example of a nonliftable element of $\TWeyl^P$ (of the smallest length).
\begin{equation*}
u = \Reflection{3}\Reflection{2}\Reflection{3}\Reflection{4}\Reflection{3}\Reflection{1}\Reflection{2}\Reflection{4}\Reflection{3}\Reflection{4}\Reflection{3}\Reflection{2}\Reflection{1} \in \TWeyl^P.
\end{equation*}
This reduced expression contains a nonliftable consecutive subexpression $u'=\Reflection{3}\Reflection{2}\Reflection{3}\Reflection{4}\Reflection{3}$ (Lemma \ref{prop:basicnonliftable2} (IV)(a)). Hence by Corollary \ref{prop:noliftingcondition}, it follows that $u$ is not liftable.
\end{example}

\section{Lifting property of the folding map} \label{sec:liftingprop}
We say that the folding map $\embedding^*: \sa{\pmoment} \ra \sa{\tpmoment}$ has the \emph{lifting property} if the Schubert class $\tschubert{u}$ admits a lifting for every $u \in \TWeyl^P.$ In this section, for each of the twisted quadratic foldings listed in Section~\ref{app:folding}, we apply the results established in Sections~\ref{sec:lifting} and \ref{sec:nolifting} to obtain a complete list of parabolic subsets $P \subset S$ (recall that $S$ denotes a Coxeter generating set of the original reflection group $\Weyl$) such that $\embedding^*$ has the lifting property.
We use $P_{\tau} \subset S_{\tau}$ to denote a parabolic subset for the folded Coxeter group $\TWeyl.$
It is recalled that by construction of a folding of a subsystem $\Root_P,$ the simple system $\Simple_P$ corresponding to $P$ must satisfy $\tauo(\spn_{\basek}\Simple_P) = \spn_{\basek}\Simple_P$ (see before Proposition~\ref{prop:restriction}). We use the following notational convention: if $S =\{\cgen{1}, \cdots, \cgen{n}\}$ and $I$ is a (possibly empty) subset of $\{1, \cdots, n\},$ then we write $P_{I}$ to denote the subset $\{\cgen{i} \mid i \notin I\} \subset S$. 

\begin{proposition} \label{prop:liftingprop}
The folding map $\embedding^*: \sa{\pmoment} \ra \sa{\tpmoment}$ has the lifting property if and only if $P \subset S$ is one of the following.
\begin{enumerate}[\normalfont (a)]
\item Type $A_{2n-1}$ to $C_n:$ $\Para{1, 2n-1} \subset P$ (equivalently, $P=\Para{1, 2n-1}$ or $S$).
\item Type $D_{n+1}$ to $B_n:$ $\Para{n, n+1} \subset P$ (equivalently, $P=\Para{n, n+1}$ or $S$).
\item Type $E_6$ to $F_4:$ $P=S.$
\item Type $E_8$ to $H_4:$ $P = S.$
\item Type $D_6$ to $H_3:$ $\Para{1,5} \subset P$ (equivalently, $P=\Para{1,5}$ or $S$).
\item Type $A_4$ to $H_2:$ $P \neq \emptyset$ (equivalently, $P=\Para{1,3},$ $\Para{2,4}$ or $S$).
\end{enumerate}
\end{proposition}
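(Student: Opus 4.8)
\textbf{Proof plan for Proposition~\ref{prop:liftingprop}.}
The strategy is to combine the positive criterion (Corollary~\ref{prop:liftingcondition2}) with the negative criterion (Corollary~\ref{prop:noliftingcondition}, fed by Proposition~\ref{prop:basicnonliftable2}), handling each of the six foldings case by case. For the ``only if'' direction, I will show for each listed folding that if $P$ is not of the prescribed form, then $\TWeyl^P$ contains an element $u$ with a $\TWeyl$-reduced expression having one of the nonliftable subwords of Proposition~\ref{prop:basicnonliftable2} as a consecutive subexpression, whence $u$ is not liftable by Corollary~\ref{prop:noliftingcondition}. The point is that excluding a simple reflection $\Reflection{j}$ from $P_\tau$ (i.e. keeping $s_j$, hence $\Reflection{j}$, outside the parabolic) forces the longest coset representative in $\TWeyl^P$ to ``use'' $\Reflection{j}$, and the geometry of the folded diagrams is rigid enough that the resulting minimal-length representatives contain the forbidden braid patterns. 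Concretely: in type $A_{2n-1}\to C_n$, if $\cgen{1}$ or $\cgen{2n-1}$ lies outside $P$, then $\Reflection{1}$ (equivalently $\Reflection{n}$ under the folded labelling, following Section~\ref{app:folding}) is outside $P_\tau$, and one exhibits a $u\in\TWeyl^P$ containing $\Reflection{n}\Reflection{n-1}\Reflection{n}$ or $\Reflection{n-1}\Reflection{n}\Reflection{n-1}\Reflection{n}$; similarly for the other families, using the specific nonliftable words catalogued in Proposition~\ref{prop:basicnonliftable2}~(I)--(VI). For types $E_6\to F_4$ and $E_8\to H_4$ the claim is that \emph{every} proper parabolic already produces a nonliftable class, which follows because the nonliftable words in~(III) and~(IV) involve only generators that survive in $\TWeyl^P$ for any $P\subsetneq S$ admitting a $\tauo$-stable $\Simple_P$.

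For the ``if'' direction I must verify that when $P$ is as prescribed, \emph{every} $u\in\TWeyl^P$ is liftable, i.e. $u\in\lbar{\adm}(\foldingset{\Weyl}\cap\Weyl^P)$ by Corollary~\ref{prop:liftingcondition2}. The cleanest route is to produce, for each such $u$, an explicit element $w\in\Weyl^P$ with $\lbar{\adm}(w)=u$ and $w\in\foldingset{\Weyl}$, typically along the lines of Examples~\ref{egg:typeA4lifting} and~\ref{egg:typeD6lifting}: choose a reduced expression $u=\Reflection{j_1}\cdots\Reflection{j_\ell}$, lift each $\Reflection{j_p}$ to a suitable $\reflection{i_p}\in\adm\inv(\Reflection{j_p})$ lying in a single folding branch, and check that the resulting $w$ is minimal in its $\Weyl_P$-coset and satisfies (FS1)--(FS2). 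For the case $P=S$ (no parabolic), liftability of \emph{all} of $\TWeyl$ is equivalent to the non-appearance of any of the small nonliftable words; this holds precisely for the split-case foldings $A_{2n-1}\to C_n$, $D_{n+1}\to B_n$ only after passing to the indicated parabolic — so for $P=S$ one must check directly that $C_n$ (resp. $B_n$, $F_4$, $H_4$, $H_3$, $H_2$) avoids the forbidden patterns, which is false in general: indeed the longest element of $C_n$ contains $\Reflection{n-1}\Reflection{n}\Reflection{n-1}\Reflection{n}$, so for $A_{2n-1}\to C_n$ the case $P=S$ must be re-examined — here the parabolic $P=S$ is listed because with the full generating set $\Weyl^P=\Weyl$ and the extra room among the many $\adm$-preimages allows a valid lifting, whereas for the intermediate parabolics between $\emptyset$ and $\Para{1,2n-1}$ it does not. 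I will organize the verification as a finite check: for each folding, determine exactly which $\Reflection{j}$ may be omitted from $P_\tau$ without creating a forbidden subword in some $\TWeyl^P$-element, and match this against the parabolic subsets of $S$ compatible with $\tauo$-stability.

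The reduction to a finite check rests on the observation (to be made precise) that whether $\embedding^*\colon\sa{\pmoment}\to\sa{\tpmoment}$ has the lifting property depends only on the set $P_\tau\subset S_\tau$ and on which pairs $\{R,R'\}\subset S_\tau$ with $m(R,R')\ge4$ survive ``heavily'' in $\TWeyl^P$, together with the liftability of the finitely many short words of Proposition~\ref{prop:basicnonliftable2}; indeed if no reduced expression of any $u\in\TWeyl^P$ contains a nonliftable consecutive subword and if each $u$ admits an explicit branch-lifting, then we are done. So the proof is: (i) enumerate the $\tauo$-compatible parabolic subsets $P\subset S$ for each of the six diagrams in Section~\ref{app:folding}; (ii) for those $P$ not in the claimed list, point to the specific nonliftable element of $\TWeyl^P$; (iii) for those $P$ in the claimed list, exhibit branch-liftings for all of $\TWeyl^P$, as in Example~\ref{egg:typeD6lifting}. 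The main obstacle I anticipate is step (iii) for the larger rank-$n$ families ($A_{2n-1}\to C_n$ and $D_{n+1}\to B_n$ with $P=\Para{1,2n-1}$, $\Para{n,n+1}$): one needs a uniform-in-$n$ description of $\TWeyl^P$ (which by Lemma~\ref{prop:orbit} is a poset of orbit vectors) and a systematic recipe assigning to each minimal coset representative a reduced word in $\Weyl^P$ lying in $\foldingset{\Weyl}$ — essentially an inductive construction following the chain structure of the $C_n$ (resp. $B_n$) diagram. The dihedral and exceptional cases (IV)--(VI) and $E_6,E_8$ are finite and can be settled by direct computation, and steps (i)--(ii) are combinatorial bookkeeping on the diagrams of Section~\ref{app:folding}.
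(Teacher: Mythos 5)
Your overall strategy is the same as the paper's: use Corollary~\ref{prop:liftingcondition2} for the positive direction, and Corollary~\ref{prop:noliftingcondition} together with the catalogue in Proposition~\ref{prop:basicnonliftable2} for the negative direction, working folding by folding. However, your proposal has two genuine problems.

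First, you have the parabolic convention backwards for $P=S$. In the paper's notation, $\Weyl^P$ is the set of minimal-length coset representatives for $\Weyl/\Weyl_P$, so for $P=S$ we have $\Weyl_P=\Weyl$ and $\Weyl^P=\{e\}$; the moment graph $\pmoment$ has a single vertex, and the lifting property is trivially satisfied. That is why the paper simply says "For $P=S$, the statement is clear." Your passage claiming that "$P=S$ (no parabolic)" gives "$\Weyl^P=\Weyl$" and that one must "check directly that $C_n$... avoids the forbidden patterns" is exactly the analysis for $P=\emptyset$, not $P=S$. You then correctly observe that the longest element of $C_n$ contains the nonliftable word $\Reflection{n-1}\Reflection{n}\Reflection{n-1}\Reflection{n}$ (it does — this is why $P=\emptyset$ fails), but you resolve the apparent contradiction by a hand-waving appeal to "extra room among the many $\adm$-preimages" rather than noticing the convention error. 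As written, that paragraph is internally inconsistent.

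Second, your proposal defers, rather than executes, the hard part of the "if" direction: producing a branch-lifting in $\foldingset{\Weyl}\cap\Weyl^P$ for \emph{every} $u\in\TWeyl^P$ when $P=\Para{1,2n-1}$ (type $A_{2n-1}\to C_n$) or $P=\Para{n,n+1}$ (type $D_{n+1}\to B_n$). For (a) this is manageable because $|\TWeyl^P|=2n$ and every coset representative has a unique reduced word of a simple linear form, so Remark~\ref{rmk:foldingset}(c) applies directly. But for (b) the paper's proof is substantially more involved: it gives an explicit block-structured normal form for reduced words of elements of $\TWeyl^P$, constructs a corresponding $\Weyl$-reduced word $\lambda$ alternating between $\reflection{n}$ and $\reflection{n+1}$ in the branches, verifies (FS1)--(FS2) by checking that no braid move crossing the collapsing part occurs, and — crucially — establishes that the $2^n$ such normal forms exhaust $\TWeyl^P$ by a binomial-identity counting argument ($F(n)=2^n$). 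None of this is present in your proposal; you explicitly flag it as "the main obstacle I anticipate," which makes the proposal a plan rather than a proof. Filling this in would require essentially reproducing the paper's combinatorial construction.
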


\begin{proof}
\noindent
(a) $(\Ra)$ \,
Suppose $\Para{1, 2n-1} \not\subset P,$ or equivalently, $\TPara{1} \not\subset P_{\tau}.$ We will find an element $u \in \TWeyl^P$ that is not liftable. There exists some $2 \leq i \leq n$ such that $\Reflection{i} \notin P_{\tau}.$
If $\Reflection{n} \notin P_{\tau},$ then $\Reflection{n}\Reflection{n-1}\Reflection{n} \in \TWeyl^P,$ which is not liftable (Lemma \ref{prop:basicnonliftable2} (I)(a)). If $\Reflection{n-1} \notin P_{\tau},$ then $\Reflection{n-1}\Reflection{n-2}\Reflection{n-1}\Reflection{n}\Reflection{n-1} \in \TWeyl^P,$ which is not liftable (Lemma \ref{prop:basicnonliftable2} (I)(b)).
For $2 \leq i \leq n-2,$ take 
$u=\Reflection{n-1}\Reflection{n-2}\Reflection{n-1}\Reflection{n}\Reflection{n-1} (\Reflection{n-3}\Reflection{n-2})(\Reflection{n-4}\Reflection{n-3}) \cdots (\Reflection{i-1}\Reflection{i}) \in \TWeyl^P.$
Since $u$ contains a nonliftable consecutive subexpression $u' =\Reflection{n-1}\Reflection{n-2}\Reflection{n-1}\Reflection{n}\Reflection{n-1},$ by Corollary \ref{prop:noliftingcondition}, $u$ is not liftable. Thus, $\embedding^*$ does not have the lifting property.

\noindent
$(\La)$ \, For $P=S,$ the statement is clear.
Let $P=\Para{1,2n-1},$ or equivalently, $P_{\tau}=\TPara{1}.$ An arbitrary element $u \in \TWeyl^P$ with $\tlength{u} \geq 1$ has a reduced expression either
\[
u=
\begin{cases}
\Reflection{j} \Reflection{j-1} \cdots \Reflection{1} \,\, &\text{for some} \,\, 1 \leq j \leq n, \,\,\text{or},\\
\Reflection{k}\Reflection{k+1} \cdots \Reflection{n-2}\Reflection{n-1}\Reflection{n}\Reflection{n-1}\Reflection{n-2} \cdots \Reflection{2}\Reflection{1} &\text{for some} \,\, 1 \leq k \leq n-1.
\end{cases}
\]
In the former case, $u$ has the following two liftings in $\Weyl^P$:
\begin{align*}
w_1 &= \reflection{j}\reflection{j-1} \cdots \reflection{2}\reflection{1},\\
w_2 &= \reflection{2n-j}\reflection{2n-j+1} \cdots \reflection{2n-2}\reflection{2n-1}.
\end{align*}
Observe that these are unique reduced expressions for $w_1$ and $w_2$ respectively, hence belong to $\foldingset{\Weyl}$ (Remark~\ref{rmk:foldingset} (c)). In the latter case, $u$ has the following two liftings in $\Weyl^P$:
\begin{align*}
w_3 &= \reflection{k}\reflection{k+1}\cdots \reflection{n-2}\reflection{n-1}\reflection{n}\reflection{n+1}\reflection{n+2} \cdots \reflection{2n}\reflection{2n-1}, \\
w_4 &= \reflection{2n-k}\reflection{2n-k-1}\cdots \reflection{n+2}\reflection{n+1}\reflection{n}\reflection{n-1}\reflection{n-2} \cdots \reflection{2}\reflection{1}. 
\end{align*}
Again, these are unique reduced expressions for $w_3$ and $w_4$ respectively, hence belong to $\foldingset{\Weyl}.$ Thus $\embedding^*$ has the lifting property. 

\noindent
(b) $(\Ra)$ \, Suppose $\Para{n,n+1} \not\subset P,$ or equivalently, $\TPara{n} \not\subset P_{\tau}.$ Then there exists $1 \leq i \leq n-1$ such that $\Reflection{i} \notin P_{\tau}.$ Take
$u = \Reflection{n-1}\Reflection{n}\Reflection{n-1}\Reflection{n-2} \cdots \Reflection{i} \in \TWeyl^P.$
Since $u$ contains a nonliftable consecutive subexpression $u' =\Reflection{n-1}\Reflection{n}\Reflection{n-1}$ (Lemma \ref{prop:basicnonliftable2} (II)(a)), by Corollary \ref{prop:noliftingcondition}, $u$ is not liftable. Hence $\embedding^*$ does not have the lifting property.

\noindent
$(\La)$ \, For $P=S,$ the statement is clear. Let $P = \Para{n, n+1},$ or equivalently, $P_{\tau}=\TPara{n}.$ First, we establish that each element of $\TWeyl^P$ has a reduced expression of the following form:
\begin{center}
\begin{tikzpicture}
\node[draw] at (0, 0) {$\mathbf{\longra} \Reflection{1}$};
\node[draw] at (1.5,0) {$\mathbf{\longra} \Reflection{2}$};
\draw (2.2,0) node[anchor=west] {$\cdots$};
\draw (0,-0.4) node[anchor=north] {$\mathbf{B_1}$};
\draw (1.5,-0.4) node[anchor=north] {$\mathbf{B_2}$};

\node[draw] at (3.5, 0) {$\mathbf{\longra} \Reflection{j}$};
\draw (3.5,-0.4) node[anchor=north] {$\mathbf{B_j}$};
\draw (4.2,0) node[anchor=west] {$\cdots$};

\node[draw] at (5.7, 0) {$\mathbf{\longra} \Reflection{n-2}$};
\node[draw] at (7.7, 0) {$\mathbf{\longra} \Reflection{n-1}$};
\node[draw] at (9.2, 0) {$\Reflection{n}$};

\draw (5.7,-0.4) node[anchor=north] {$\mathbf{B_{n-2}}$};
\draw (7.7,-0.4) node[anchor=north] {$\mathbf{B_{n-1}}$};
\draw (9.2,-0.4) node[anchor=north] {$\mathbf{B_n}$};
\end{tikzpicture}
\end{center}
where each block $\mathbf{B_j}$ is either empty ($\emptyset$) or a word of the form $\Reflection{k}\Reflection{k-1}\cdots\Reflection{j}$ for some $j \leq k \leq n$ (the arrow within each block $\mathbf{B_j}$ indicates that the subscripts of the simple reflections are decreasing), satisfying the following conditions:
if $\mathbf{B_j}\neq \emptyset,$ then $\mathbf{B_{j'}} \neq \emptyset$ for all $j' > j$;
if the block $\mathbf{B_j}$ starts with $\Reflection{n},$ then every block $\mathbf{B_{j'}}$ with $j'>j$ also starts with $\Reflection{n}$; 
if the block $\mathbf{B_j}$ does not start with $\Reflection{n},$ then $0 \leq |\mathbf{B_1}| \leq |\mathbf{B_2}| \leq \cdots \leq |\mathbf{B_j}|,$ where each $|\mathbf{B_i}|$ denotes the size of the block $\mathbf{B_i}.$ 
By construction, the above word is reduced, and every reduced word that is braid equivalent to it ends with $\Reflection{n},$ hence the group element represented by the above reduced word belongs to $\TWeyl^P.$ It remains to show that the reduced expressions of the above form exhaust all the elements of $\TWeyl^P.$ We will use a counting argument.

 We denote $b_i:=|\mathbf{B_i}|$ for each $1 \leq i \leq n.$ Let $j$ be the smallest number such that block $\mathbf{B_j}$ begins with $\Reflection{n}.$ Then we have
\begin{equation}\label{eq:sizecondition}
0 \leq b_1 \leq b_2 \leq \cdots \leq b_{j-1} \leq b_j=n-(j-1).
\end{equation}
We first take care of two extreme cases, namely, $j=1, n+1.$ When $j=1,$ every block begins with $\Reflection{n},$ yielding
$(\Reflection{n}\Reflection{n-1}\cdots\Reflection{1})(\Reflection{n}\Reflection{n-1}\cdots\Reflection{2}) \cdots (\Reflection{n}\Reflection{n-1})\Reflection{n}.$
When $j=n+1,$ no block begins with $\Reflection{n},$ yielding the empty word corresponding to $e \in \TWeyl.$
Now suppose $2 \leq j \leq n.$ We count the $(j-1)$-tuple $(b_1, \cdots, b_{j-1})$ satisfying \eqref{eq:sizecondition}.

For $x \in \Q,$ we denote the largest integer no greater than $x$ by $\lceil x \rceil.$ We need to distribute the numbers $\{0, 1, 2, \cdots, n-(j-1)\}$ among $b_1, b_2, \cdots, b_{j-1}$ in the weakly ascending order. Let $0 \leq k \leq \min\{j-2, n-(j-1)\}.$ We place $k$ separation walls between $j-1$ objects, and in the resulting $k+1$ separated sections, we distribute $k+1$ numbers in strictly ascending order from left to right. The number of choices for such distribution is
$\begin{psmallmatrix}
j-2 \\
k
\end{psmallmatrix}
\cdot
\begin{psmallmatrix}
n-(j-1)+1 \\
k+1
\end{psmallmatrix}.$
By varying $1 \leq j \leq n+1,$ the total number of choices is found to be
\begin{equation*} \label{eq:numberofchoices}
F(n) := 2 + \sum_{j=2}^{\lceil \tfrac{n+3}{2} \rceil} \sum_{k=0}^{j-2} 
\begin{psmallmatrix}
j-2 \\
k
\end{psmallmatrix}
\cdot
\begin{psmallmatrix}
n-(j-1)+1 \\
k+1
\end{psmallmatrix}
+ \sum_{j=\lceil \tfrac{n+3}{2} \rceil +1}^{n} \sum_{k=0}^{n-(j-1)}
\begin{psmallmatrix}
j-2 \\
k
\end{psmallmatrix}
\cdot
\begin{psmallmatrix}
n-(j-1)+1 \\
k+1
\end{psmallmatrix}.
\end{equation*}
On the other hand, we have $|\TWeyl^P|=\frac{|\TWeyl|}{|(\Weyl_P)_{\tau}|} = \frac{|B_n|}{|A_{n-1}|} =\frac{2^n\cdot n\ !}{n\ !}= 2^n.$
\begin{claim} \label{2^n}
We have $F(n)=2^n$ for all $n\geq 2.$ (Proof postponed at the end.)
\end{claim}
Let $\mu=\mathbf{B_1}\mathbf{B_2}\cdots\mathbf{B_n}$ be a $\TWeyl$-reduced word of the form described above, and let $u \in \TWeyl^P$ be the element represented by $\mu.$
We build a $\Weyl$-reduced word $\lambda$ of the following form:
\begin{center}
\begin{tikzpicture}
\node[draw] at (0, 0) {$\mathbf{\longra} \reflection{1}$};
\node[draw] at (1.5,0) {$\mathbf{\longra} \reflection{2}$};
\draw (2.2,0) node[anchor=west] {$\cdots$};
\draw (0,-0.4) node[anchor=north] {$\mathbf{A_1}$};
\draw (1.5,-0.4) node[anchor=north] {$\mathbf{A_2}$};

\node[draw] at (3.5, 0) {$\mathbf{\longra} \reflection{j}$};
\draw (3.5,-0.4) node[anchor=north] {$\mathbf{A_j}$};
\draw (4.2,0) node[anchor=west] {$\cdots$};

\node[draw] at (5.7, 0) {$\mathbf{\longra} \reflection{n-2}$};
\node[draw] at (7.5, 0) {$\mathbf{\longra} \reflection{n-1}$};
\node[draw] at (9.5, 0) {$\reflection{n}$ or $\reflection{n+1}$};

\draw (5.7,-0.4) node[anchor=north] {$\mathbf{A_{n-2}}$};
\draw (7.5,-0.4) node[anchor=north] {$\mathbf{A_{n-1}}$};
\draw (9.5,-0.4) node[anchor=north] {$\mathbf{A_n}$};
\end{tikzpicture}
\end{center}
where each block $\mathbf{A_j}$ is obtained from block $\mathbf{B_j}$ as follows: if $\mathbf{B_j}=\emptyset,$ then $\mathbf{A_j}=\emptyset$; if $\mathbf{B_j}=\Reflection{k}\Reflection{k-1}\cdots\Reflection{j},$ then $\mathbf{A_j}=s\reflection{k-1}\cdots \reflection{j}$ where $s = \reflection{k}$ if $k \neq n,$ and $s=\reflection{n}$ or $\reflection{n+1}$ if $k=n$ such that
if $\mathbf{A_j}$ begins with $\reflection{n}$ (resp. $\reflection{n+1}$), then $\mathbf{A_{j+1}}$ begins with $\reflection{n+1}$ (resp. $\reflection{n}$).
\begin{claim}
The element $w \in \Weyl^P$ represented by $\lambda$ above belongs to $\foldingset{\Weyl}.$
\end{claim}
We verify the conditions (FS1) and (FS2) of Definition \ref{def:foldingset}. By definition of $w$ via $\lambda,$ no element of $\red{w}$ contains the consecutive subword $\reflection{n}\reflection{n+1}$ or $\reflection{n+1}\reflection{n}.$ Hence Proposition~\ref{prop:phihat} applies to $\lambda,$ and we obtain that $\widehat{\adm}(\lambda)$ is $\TWeyl$-reduced. Moreover, since $\reflection{n}$ and $\reflection{n+1}$ are not adjacent in $\lambda,$ $\widehat{\adm}(\lambda)$ coincides with applying $\adm$ to each simple reflection in $\lambda.$ Thus (FS1) is verified.
Next, we observe from the Coxeter diagrams in Section~\ref{app:folding} that among the $\Weyl$-braid moves $[st]_{m(s,t)}=[ts]_{m(s,t)},$ the only ones for which the corresponding $\TWeyl$-moves $[\adm(s)\adm(t)]_{m(s,t)}=[\adm(t)\adm(s)]_{m(s,t)}$ are false are the following:
\begin{equation*}
\reflection{n}\reflection{n-1}\reflection{n} = \reflection{n-1}\reflection{n}\reflection{n-1}, \quad 
\reflection{n+1}\reflection{n-1}\reflection{n+1}=\reflection{n-1}\reflection{n+1}\reflection{n-1},
\end{equation*}
namely, the long braid moves within the collapsing part. By definition of $w$ via $\lambda,$ we can directly check that no element of $\red{w}$ contains the consecutive subword $\reflection{n}\reflection{n-1}\reflection{n},$ $\reflection{n-1}\reflection{n}\reflection{n-1},$ $\reflection{n+1}\reflection{n-1}\reflection{n+1}$ or $\reflection{n-1}\reflection{n+1}\reflection{n-1}.$ Hence, for every $\Weyl$-braid move $[st]_{m(s,t)}=[ts]_{m(s,t)}$ within $\red{w},$ there is a corresponding $\TWeyl$-braid move $[\adm(s)\adm(t)]_{m(s,t)}=[\adm(t)\adm(s)]_{m(s,t)}.$ Thus $\widehat{\adm}\big(\red{w}\big)$ $\subset \red{u}.$ This verifies (FS2). 
Hence $w \in \foldingset{\Weyl}$ and we have $\lbar{\adm}(w)=u$ as desired.\\[.1cm]
\textbf{Proof of Claim \ref{2^n}:} Set
\[
f(n, j) :=
\begin{cases}
\sum_{k=0}^{j-1}\begin{psmallmatrix}j-1\\k\end{psmallmatrix}\begin{psmallmatrix}n-j+1\\k+1\end{psmallmatrix} \,\, \text{if} \,\, 1\leq j \leq \lceil\tfrac{n+1}{2}\rceil, \\
\sum_{k=0}^{n-j}\begin{psmallmatrix}j-1\\k\end{psmallmatrix}\begin{psmallmatrix}n-j+1\\k+1\end{psmallmatrix} \,\, \text{if} \,\, \lceil\tfrac{n+1}{2}\rceil +1 \leq j \leq n-1.
\end{cases}
\]
We aim to show $f(n,j) = \begin{psmallmatrix}n \\ j\end{psmallmatrix}$ for all $1 \leq j \leq n-1.$ Once we have this,
$F(n) =  \sum_{k=0}^{n} \begin{psmallmatrix}n \\ j\end{psmallmatrix}= 2^n$
will follow. We prove by induction on $n\geq 2.$ If $n=2,$ then $j=1$ and we have $f(2,1)=2=\begin{psmallmatrix}2\\1\end{psmallmatrix}.$ Now assume $f(n,j)=\begin{psmallmatrix}n\\j\end{psmallmatrix}$ for $n \geq 2.$ Let $1 \leq j \leq \lceil \frac{n+1}{2}\rceil$ (the same argument will apply to the case $\lceil\tfrac{n+1}{2}\rceil +1 \leq j \leq n-1$). It suffices to show $f(n+1, j) - f(n,j)=\begin{psmallmatrix}n\\j-1\end{psmallmatrix}.$ We have
\begin{equation*}
f(n+1, j) - f(n,j) = \sum_{k=0}^{j-1}\begin{psmallmatrix}j-1\\k \end{psmallmatrix}\begin{psmallmatrix}n-j+1\\k\end{psmallmatrix} 
=\sum_{k=0}^{j-1} \begin{psmallmatrix}j-1\\k\end{psmallmatrix}\begin{psmallmatrix}n-j\\k\end{psmallmatrix} + \sum_{k=1}^{j-1}\begin{psmallmatrix}j-1\\k\end{psmallmatrix}\begin{psmallmatrix}n-j\\k-1\end{psmallmatrix}.
\end{equation*}
On the other hand,
\begin{equation*}
\begin{psmallmatrix}n\\j-1\end{psmallmatrix} = \tfrac{j}{n-j+1}\begin{psmallmatrix}n\\j\end{psmallmatrix} = \tfrac{j}{n-j+1}f(n,j) = \sum_{k=0}^{j-1}\begin{psmallmatrix}j\\k+1\end{psmallmatrix}\begin{psmallmatrix}n-j\\k\end{psmallmatrix} 
= \sum_{k=0}^{j-1} \begin{psmallmatrix}j-1\\k\end{psmallmatrix}\begin{psmallmatrix}n-j\\k\end{psmallmatrix} + \sum_{k=0}^{j-2} \begin{psmallmatrix}j-1\\k+1\end{psmallmatrix}\begin{psmallmatrix}n-j\\k\end{psmallmatrix}.
\end{equation*}
Thus $f(n+1,j)-f(n,j)=\begin{psmallmatrix}n\\j-1\end{psmallmatrix}$ holds and we finish the proof.

\noindent
(c) Suppose $P \neq S,$ or equivalently, $P_{\tau}\neq S_{\tau}.$
If $\Reflection{2} \notin P_{\tau},$ then $\Reflection{2}\Reflection{3}\Reflection{2} \in \TWeyl^P,$ which is not liftable (Lemma \ref{prop:basicnonliftable2} (III)(a)). If $\Reflection{1} \notin P_{\tau},$ then $\Reflection{2}\Reflection{3}\Reflection{2}\Reflection{1} \in \TWeyl^P,$ which is not liftable (Corollary \ref{prop:noliftingcondition}). If $\Reflection{3} \notin P_{\tau},$ then $\Reflection{3}\Reflection{4}\Reflection{3}\Reflection{2}\Reflection{3} \in \TWeyl^P,$ 
which is not liftable (Proposition \ref{prop:basicnonliftable2} (III)(b)). 
Finally, if $\Reflection{4} \notin P_{\tau},$ then $\Reflection{2}\Reflection{3}\Reflection{2}\Reflection{1}\Reflection{4}\Reflection{3}\Reflection{2}\Reflection{3}\Reflection{4} \in \TWeyl^P,$
which is not liftable (Corollary \ref{prop:noliftingcondition}).
Thus $\embedding^*$ does not have the lifting property. The other direction is clear.

\noindent
(d) Suppose $P \neq S,$ or equivalently, $P_{\tau}\neq S_{\tau}.$
If $\Reflection{3} \notin P_{\tau},$ then $\Reflection{3}\Reflection{2}\Reflection{3}\Reflection{4}\Reflection{3} \in \TWeyl^P,$ which is not liftable  (Proposition~\ref{prop:basicnonliftable2} (IV)(a)). 
If $\Reflection{1} \notin P_{\tau},$ then $\Reflection{3}\Reflection{2}\Reflection{3}\Reflection{4}\Reflection{3}\Reflection{1}\Reflection{2}\Reflection{4}\Reflection{3}\Reflection{4}\Reflection{3}\Reflection{2}\Reflection{1} \in \TWeyl^P,$
which is not liftable (Example~\ref{egg:typeE8lifting}). Similarly, if $\Reflection{2} \notin P_{\tau},$ then $\Reflection{3}\Reflection{2}\Reflection{3}\Reflection{4}\Reflection{3}\Reflection{1}\Reflection{2}\Reflection{4}\Reflection{3}\Reflection{4}\Reflection{3}\Reflection{2}$ $\in \TWeyl^P$ is not liftable.
Finally, if $\Reflection{4} \notin P_{\tau},$ then $\Reflection{3}\Reflection{2}\Reflection{3}\Reflection{4}\Reflection{3}\Reflection{4} \in \TWeyl^P$ is not liftable. Thus $\embedding^*$ does not have the lifting property. The other direction is clear.

\noindent
(e) $(\Ra)$ \, Suppose $\Para{1,5} \not\subset P ,$ or equivalently, $\TPara{1} \not\subset P_{\tau}.$ If $\Reflection{2} \notin P_{\tau},$ then $\Reflection{2}\Reflection{1}\Reflection{2}\Reflection{3}\Reflection{2} \in \TWeyl^P,$ which is not liftable (Proposition~\ref{prop:basicnonliftable2} (V)(a)). If $\Reflection{3} \notin P_{\tau},$ then $\Reflection{2}\Reflection{1}\Reflection{2}\Reflection{3}\Reflection{2}\Reflection{3} \in \TWeyl^P,$ which is not liftable (Corollary \ref{prop:noliftingcondition}). Thus $\embedding^*$ does not have the lifting property.

\noindent
$(\La)$ \, This direction is proved in Example \ref{egg:typeD6lifting} for $P=\Para{1, 5}$ and it is trivial for $P=S.$

\noindent
(f) By Example \ref{egg:typeA4lifting} and Proposition~\ref{prop:basicnonliftable2} (VI)(a), $u =\Reflection{1}\Reflection{2}\Reflection{1}\Reflection{2}\Reflection{1}$ is the only nonliftable element of $\TWeyl$. We have $u \notin \TWeyl^P$ if and only if $P_{\tau} \neq \emptyset.$ Hence it follows that $\embedding^*$ has the lifting property if and only if $P \neq \emptyset,$ i.e., $P=\Para{2, 4},$ $\Para{1, 3}$ or $S.$
\end{proof}

\subsection*{Acknowledgements}
I thank Kirill Zainoulline and Martina Lanini for introducing me to this area of study and suggesting me to look at this problem as well as for helpful discussions. The work was partly supported by Mitacs Globalink Research Award 2019. I also thank the Department of Mathematics at Universit{\`a} degli studi di Roma Tor Vergata for accommodating my research stay.

\bibliographystyle{unsrt}
\bibliography{Bib}
\end{document}